\documentclass[12pt]{amsart}

\usepackage[dvips, dvipdfmx]{graphicx}
\usepackage[abs]{overpic}
\usepackage{amssymb}
\usepackage{amsmath}
\usepackage{amsthm}
\usepackage{mathrsfs}
\usepackage{fancybox}
\usepackage{comment}
\usepackage{layout}
\usepackage{color}

\setlength{\textwidth}{159.2 mm}
\setlength{\oddsidemargin}{0 in}
\setlength{\evensidemargin}{0 in}
\setlength{\textheight}{8.5 in}

\theoremstyle{plain}
\newtheorem{theorem}{Theorem}[section]
\newtheorem{lemma}[theorem]{Lemma}

\newtheorem{proposition}[theorem]{Proposition}
\newtheorem{conjecture}[theorem]{Conjecture}

\theoremstyle{definition}
\newtheorem{definition}[theorem]{Definition}

\newtheorem{remark}[theorem]{Remark}

\graphicspath{{figures/},{figures(mizusawa)/}}

\title[Clasper presentations of Habegger-Lin's action on string links]{Clasper presentations of Habegger-Lin's action on string links}
\author[Kotorii]{Yuka Kotorii}
\address[Kotorii]{Mathematics Program, Graduate School of Advanced Science and Engineering, Hiroshima University, 1-7-1 Kagamiyama Higashi-hiroshima City, Hiroshima 739-8521 Japan}
\address[Kotorii]{International Institute for Sustainability with Knotted Chiral Meta Matter (SKCM$^2$), Hiroshima University, 1-7-1 Kagamiyama Higashi-hiroshima City, Hiroshima 739-8521 Japan}
\address[Kotorii]{Interdisciplinary Theoretical and Mathematical Sciences Program, RIKEN, 2-1, Hirosawa, Wako, Saitama 351-0198, Japan}
\email{kotorii@hiroshima-u.ac.jp}

\author[Mizusawa]{Atsuhiko Mizusawa}
\address[Mizusawa]{Faculty of Science and Engineering, Waseda University, 3-4-1 Okubo, Shinjuku-ku, Tokyo 169-8555, Japan} 
\email{a\_mizusawa@aoni.waseda.jp}
\keywords{link-homotopy, string link, 4-component link, 5-component link, clasper, Milnor's $\overline{\mu}$-invariant, algorithm}
\date{\today}

\begin{document}

\begin{abstract}
Habegger and Lin gave a classification of the link-homotopy classes of links as the link-homotopy classes of string links modulo the actions of conjugations and partial conjugations for string links. 
In this paper, we calculated the actions of the partial conjugations and the conjugations explicitly for 4- and 5-component string links which gave classifications (presentations) of the link-homotopy classes of 4- and 5-component links. As an application, we can run Habegger and Lin's algorithm which determines whether given two links are link-homotopic or not for 4- and 5-component links.
\end{abstract}

\subjclass[2020]{57K10, 57M05}


\maketitle
\section{Introduction} \label{introduction}

Milnor \cite{Mil} introduced a concept of \textit{link-homotopy} on links, which is a restricted homotopy (he called it homotopy) and a weaker equivalence relation than isotopy.
The link-homotopy can be defined an equivalence relation generated by ambient isotopies and self-crossing changes.
Here the self-crossing change is a crossing change between the same components.  
Milnor gave a complete classification of link-homotopy classes of links with 3- or fewer components by numerical invariants derived from the reduced link group. The reduced link group is a link-homotopy invariant, which is a certain quotient of the fundamental group of the complement of a link.
Furthermore he \cite{Mil2} made the family of numerical invariants, called $\overline{\mu}$-invariants, which is the extension of the numerical invariants in \cite{Mil}.  

On the other hand, Levine \cite{Le} showed that the Milnor invariant is not enough to classify 4-component links completely.
He \cite{Le2} improved indeterminacy of $\overline{\mu}$-invariants and then classified 4-component links.
In order to classify more general cases, Habegger and Lin \cite{HL} defined a concept of the string link.
They showed that the set of link-homotopy classes of string links has a group structure, which is the same as the group of link-homotopy classes of pure braids. 
They also represented the group as a subgroup of the automorphism group of the reduced free group. 
Then, they defined Milnor's $\mu$-invariants for string links and classified the link-homotopy classes of string links completely by the $\mu$-invariants.
Furthermore they gave the Markov-type theorem for link-homotopy classes of string links. That is, the closures of two string links are link-homotopic if and only if these two can be related by a finite sequence of string links such that any two adjacent string links are either \textit{conjugation} or \textit{partial conjugation}.
They gave an algorithm which decides whether the closures of two string links are link-homotopic or not and therefore gave an algorithm which decides whether two links are link-homotopic or not by using the actions of partial conjugations and the conjugations.
On the other hand, the orbit space of the group action on the group of link-homotopy classes of string links generated by the partial conjugations and the conjugations is not represented exactly and we can not run the algorithm actually.
After that, Hughes \cite{H} showed that the partial conjugations generate the conjugations. Now it is enough to consider the partial conjugations only. 
\par
In some cases, the link-homotopy classes are described by using the \textit{claspers} introduced by Habiro \cite{Ha} and Gusarov \cite{G} independently. 
For 2- and 3-component links, the standard forms of link-homotopy classes are given by using Hopf chords and Borromean chords (i.e. $C_1$ and $C_2$-trees) in \cite{TY}. For the link-homotopy classes of string links, Yasuhara \cite{Y} gave a canonical form using claspers and showed the correspondence between the Milnor invariants and claspers (see also \cite{MY2}). We also remark the works \cite{MN, KM} on HL-homotopy for handle-body links by using the clasper theory.
\if0
\par
In the previous work \cite{KM2}, the authors generalize the standard form using the claspers in \cite{TY} to the 4-component case and gave a classification of the link-homotopy classes of 4-component links other than Levine's one \cite{Le2}. However, in the proof of the result, we used Levine's result and it did not give an alternative proof of Levine's results. 
\fi
\par
For 4-component link-homotopy classes, Levine \cite{Le2} represented a link as commutator numbers and gave its ambiguity by calculating the automorphism group of the reduced group of the link. He then gave a classification of links. On the other hand, in our previous paper \cite{KM2}, we represented a link as the numbers of claspers in the clasper presentation and gave its ambiguity by calculating transformation between claspers. We then gave a geometric classification of links. 
 However, in the proof of the result, we used Levine's result and it did not give an alternative proof of Levine's result.
 \par
 The classification in the previous work \cite{KM2} was applied to Habegger and Lin's algorithm since the calculations of the numbers of claspers correspond to the actions of the partial conjugations. We succeeded in running Habegger and Lin's algorithm for the 4-component case.

\if0
\par
In the link-homotopy classes, clasper theory is especially useful, since we have a canonical form on string link in \cite{Y}. 
We note that work \cite{MN, KM} on HL-homotopy by using clasper theory.

In some cases, the link-homotopy classes are described by using the claspers defined in \cite{Ha}. For 3-component links, the link-homotopy classes are described by Hopf chords and Borromean chords (i.e. $C_1$ and $C_2$ claspers) in \cite{TY}. For the link-homotopy classes of string links, the correspondence between the Milnor invariants and claspers are shown in \cite{Y}. 
\fi

\par
In the present paper, we apply the clasper theory to 4- and 5-component string links. We calculate the actions of the partial conjugations for string-links and explicitly classify 4- and 5-component links. 


\begin{theorem}[Theorems \ref{Rep4-compLink} and \ref{Rep5-compLink}] \label{mainthm}
Let $\mathcal{L}_n$ be the set of the link-homotopy classes of $n$-component links. For 4-component links, 
$$\mathcal{L}_4=\mathbb{Z}^{12}/{X_4},$$
where $\mathbb{Z}^{12}$ is the set of ordered tuples of 12 integers
$$(y_{12}, y_{13}, y_{14}, y_{23}, y_{24}, y_{34}, y_{123}, y_{124}, y_{134}, y_{234}, y_{1234}, y_{1324})$$
and $X_4$ is the relations generated by the following relations in Table \ref{ModAct4-compSF},
$$\{\overline{x}'_{12},\overline{x}'_{13}, \overline{x}'_{21}, \overline{x}'_{23}, \overline{x}'_{31}\overline{x}'_{32}, \overline{x}'_{41}, \overline{x}'_{42} \}.$$
For 5-component links, 
$$\mathcal{L}_5=\mathbb{Z}^{36}/{X_5}$$
where $\mathbb{Z}^{36}$ is the set of ordered tuples of 36 integers
$$(y_{12}, y_{13}, y_{14}, y_{15}, y_{23}, y_{24}, y_{25}, y_{34}, y_{35}, y_{45},$$
$$ y_{123}, y_{124}, y_{134}, y_{125}, y_{135}, y_{145}, y_{234}, y_{235}, y_{245}, y_{345}, $$
$$y_{1234}, y_{1324}, y_{1235}, y_{1245}, y_{1325}, y_{1345}, y_{1425}, y_{1435}, y_{2345}, y_{2435}, $$
$$y_{12345}, y_{12435}, y_{13245}, y_{13425}, y_{14235}, y_{14325})$$
and $X_5$ is the relations generated by the following relations in Table \ref{ModAct4-compSF},
$$\{\overline{x}'_{12}, \overline{x}'_{13}, \overline{x}'_{14}, 
\overline{x}'_{21}, \overline{x}'_{23}, \overline{x}'_{24}, 
\overline{x}'_{31}, \overline{x}'_{32}, \overline{x}'_{34}, 
\overline{x}'_{41}, \overline{x}'_{42}, \overline{x}'_{43},
\overline{x}'_{51}, \overline{x}'_{52}, \overline{x}'_{53}
\}.$$
\end{theorem}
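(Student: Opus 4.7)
The plan is to combine three ingredients: Habegger--Lin's Markov-type theorem, Hughes's reduction to partial conjugations only, and Yasuhara's canonical clasper form for string links up to link-homotopy. Together these express $\mathcal{L}_n$ as the orbit set of the partial conjugation action on the link-homotopy group of $n$-component string links, and the role of clasper theory is to turn that abstract orbit description into the explicit presentation $\mathbb{Z}^{12}/X_4$ and $\mathbb{Z}^{36}/X_5$ claimed in the statement.

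First I would fix Yasuhara's canonical form, which represents every link-homotopy class of $n$-string links by an ordered concatenation of tree-claspers whose leaves lie on distinct strands. Counting the admissible trees reproduces the tuple dimensions $12 = 6+4+2$ for $n=4$ and $36 = 10+10+10+6$ for $n=5$, and the labels $y_{i_1\cdots i_k}$ encode the strand indices at the leaves together with the internal tree shape. With this parametrization in hand, the remaining content of the theorem is the additive transformation rule on each coordinate $y_I$ induced by each partial conjugation operator $\overline{x}'_{ij}$.

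The central computation is to determine, for each $\overline{x}'_{ij}$, how the canonical clasper form changes. Geometrically $\overline{x}'_{ij}$ drags a portion of the $i$-th strand once around the $j$-th; in clasper calculus this inserts a controlled family of new trees that can be renormalized to canonical form via Habiro's STU, IHX, and slide moves modulo $C_{n+1}$-equivalence. Reading off the change on each $y_I$ populates the entries of Table \ref{ModAct4-compSF}. Two reductions trim the generating set of relations: Hughes's theorem eliminates conjugations, and within the partial conjugations themselves the operators $\overline{x}'_{in}$ on the last strand, together with certain others, either act trivially on the canonical form or coincide with combinations of those already listed (which is why the product $\overline{x}'_{31}\overline{x}'_{32}$ appears rather than the two factors separately). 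This reasoning leaves the seven and fifteen generators actually listed, and one must then verify that they are sufficient to generate the full equivalence.

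The main obstacle is the clasper calculus itself: when a partial conjugation grabs a strand already carrying many $C_k$-trees, the result is a thicket of new trees that must be disentangled back to canonical form without over- or under-counting at higher degree. For the 5-component case the bookkeeping is heavy, with thirty-six coordinates to track across fifteen generators. I would organize the work by clasper degree, first handling the $C_1$-level $y_{ij}$, then the $C_2$-level $y_{ijk}$, and finally the $C_3$- and $C_4$-levels, using Yasuhara's correspondence between claspers and Milnor invariants as a consistency check at each degree.
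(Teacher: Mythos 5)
Your overall strategy --- Habegger--Lin's Markov-type theorem plus Hughes's reduction, so that $\mathcal{L}_n$ becomes the orbit set of the partial-conjugation action on the canonical clasper form, followed by clasper calculus to read off the additive effect of each generator on the coordinates $y_I$ --- is exactly the paper's route, and your coordinate counts ($12=6+4+2$ and $36=10+10+10+6$) match the canonical forms used there. Two concrete points, however, need repair before this becomes a proof of the statement as written.

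First, you skip the step that produces the primed generators $\overline{x}'_{ij}$ appearing in the statement. The raw partial conjugations $\overline{x}_{ij}$ act on the higher-degree coordinates by complicated polynomial expressions (Table \ref{Act4-compSF}); the paper then computes the commutators $[\overline{x}_{ij},\overline{x}_{kl}]$ and, for $n=5$, the degree-three commutators $[\overline{x}_{ij},[\overline{x}_{kl},\overline{x}_{st}]]$, and composes each $\overline{x}_{ij}$ with suitable powers of these to obtain the simplified $\overline{x}'_{ij}$ of Table \ref{ModAct4-compSF}. Since the commutators lie in the group generated by the $\overline{x}_{ij}$, this does not change the orbit space, but without it you cannot arrive at the presentation in terms of $\overline{x}'_{ij}$ that the theorem asserts, nor set up the algorithm that relies on the commutator tables.

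Second, your account of why the generating set can be trimmed is wrong in its mechanism. The string $\overline{x}'_{31}\overline{x}'_{32}$ in the statement is a typographical artifact (a missing comma): Theorem \ref{Rep4-compLink} explicitly lists \emph{eight} relations for $n=4$. The reduction comes from the identity $\displaystyle\prod_{j\neq i}\overline{x}'_{ij}=\mathrm{id}$ for each fixed $i$, which permits deleting exactly one generator per index $i$ (the paper deletes $\overline{x}'_{14},\overline{x}'_{24},\overline{x}'_{34},\overline{x}'_{43}$ for $n=4$ and $\overline{x}'_{15},\overline{x}'_{25},\overline{x}'_{35},\overline{x}'_{45},\overline{x}'_{54}$ for $n=5$), together with a check from the $y_{ijk}$ columns that no further generator can be removed. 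The deleted generators do not act trivially on the canonical form, and no listed generator is secretly a product of two others; justifying the list by ``the product appears rather than the two factors separately'' would yield a strictly smaller, hence incorrect, relation set.
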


\if0
\begin{table}[htb] 
\begin{center}
   \caption{The number of claspers of 4-component links} \label{originaltable}
  \begin{tabular}{|c|c|c|c|c|c|c|} \hline
      & $f_1$ & $f_2$ & $f_3$ & $f_4$ & $t_1$ & $t_2$ \\ \hline 
    $\psi_{21}$ & 0 & 0 & $c_5$ & $-c_1$ & $f_1$ & $0$ \\ \hline 
    $\psi_{41}$ & 0 & $c_6$ & $-c_5$ & $0$ & $0$ & $-f_1$ \\ \hline 
    $\psi_{12}$ & 0 & 0 & $-c_4$ & $c_2$ & $f_2$ & $0$ \\ \hline 
    $\psi_{32}$ & $c_6$ & 0 & $0$ & $-c_2$ & $0$ & $-f_2$ \\ \hline 
    $\psi_{43}$ & $c_5$ & $-c_4$ & $0$ & $0$ & $f_3$ & $0$ \\ \hline 
    $\psi_{23}$ & $-c_5$ & 0 & $0$ & $c_3$ & $0$ & $-f_3$ \\ \hline 
    $\psi_{34}$ & $-c_1$ & $c_2$ & $0$ & $0$ & $f_4$ & $0$ \\ \hline 
    $\psi_{14}$ & 0 & $-c_2$ & $c_3$ & $0$ & $0$ & $-f_4$ \\ \hline 
  \end{tabular} \\
\end{center}
\end{table}
\fi



\begin{remark} \label{Rem-for-Levine's-result}
The presentation of $\mathcal{L}_4$ in Theorem \ref{mainthm} coincides with the one in \cite{KM2}. Thus the calculations in this paper give an alternative proof of the Levine's result. 
\end{remark}

\begin{remark}
The $\overline{\mu}$-invariants for links are several quotients of the $\mu$-invariants for string links. However, the $\overline{\mu}$-invariants are not enough to classify the link-homotopy classes of links. 
So there is a question what is the appropriate quotient for links. 
In \cite{Y, MY2}, the relations between the Milnor's $\mu$-invariants and the number of claspers of the canonical form are given for string links (see Subsection \ref{canonicalform}). 
Theorem \ref{mainthm} determines the quotient (the indeterminacy) of $\overline{\mu}$-invariants for the 5-component case.
\end{remark}

\begin{remark} \label{intro-remark3}
Graff \cite{Gra} showed the presentation in Theorem \ref{mainthm} for 4-component case independently. (We also announced the result for 4-component links in \cite{KM3}.) He also gave a presentation for algebraically split 5-component links. 
\end{remark}
\par 
The calculations for Theorem \ref{mainthm} give the actions of the partial conjugations for the 4- and 5-component cases and they enable us to run Habegger and Lin's algorithm which determines whether given two link are link-homotopic or not. This recovers the algorithm in \cite{KM2} for the 4-component case. 
\par
The organization of this paper is as follows. In Section 2, we review the clasper theory and define a canonical form of string links. In Section 3, we explicitly represent the orbit space of the group action on the group of link-homotopy classes of string links generated by the partial conjugations by using clasper presentations. 
In Section 4, we give an algorithm which decides whether given two 4- or 5-component links are link-homotopic or not in terms of clasper calculations. 
In Section 5, we test the results of the calculations in Section 3 by using the actions of the conjugations for the canonical forms. 


\section*{Acknowledgement}
The authors thank Professor Akira Yasuhara for helpful comments.
Y.K. is supported by JSPS KAKENHI, Grant-in-Aid for Early-Career Scientists Grant Number 20K14322, and by RIKEN iTHEMS Program and World Premier International Research Center Initiative (WPI) program, International Institute for Sustainability with Knotted Chiral Meta Matter (SKCM$^2$).

\section{Preparations} \label{preparations}
\subsection{Markov-type theorem for link-homotopy classes of string links}

In this section, we introduce Habegger and Lin's results in \cite{HL} for link-homotopy classes of string links.  
The string links were introduced to study link-homotopy classification of links.
They gave a Markov-type theorem for the link-homotopy classes of links by using string links.  

Let $p_1, \cdots , p_n$ be points lying in order on the $x$-axis on the interior of the unit disk $D^2$. 
An {\it $n$-string link} $\sigma=\sigma_1 \cup \cdots \cup \sigma_n$ is a proper embedding of $n$ disjoint unit intervals $I_1, \cdots , I_n$ into $D^2 \times [0,1]$ such that for each $i=1, \cdots, n$, $\sigma_i(0)=(p_i, 0)$ and $\sigma_i(1)=(p_i, 1)$, where $\sigma_i$ is called the $i$-th string of the string link $\sigma$. 
Each string of a string link inherits an orientation from the usual orientation of the interval.
Figure \ref{stringlink} left shows a planar projection of a 4-string link. 
Composition of  $n$-string links is defined as follows.
Let $\sigma=\sigma_1 \cup \cdots \cup \sigma_n$ and $\sigma'=\sigma'_1 \cup  \cdots \cup \sigma'_n$ be two string links. 
Then the {\it composition}  $\sigma\sigma'= (\sigma\sigma')_1 \cup  \cdots \cup (\sigma\sigma')_n$ of  $\sigma$ and  $\sigma'$ is the string link defined by $(\sigma\sigma')_i =h_1(\sigma_i) \cup h_2(\sigma'_i)$ for each $i=1, \cdots, n$, where $h_1,h_2: D^2 \times [0,1] \rightarrow D^2 \times [0,1]$ are embeddings defined by 
\[ h_1(p,t)=(p,\frac{1}{2}t) \text{ and } h_2(p,t)=(p,\frac{1}{2}+\frac{1}{2}t)  \]
for any $p \in D^2$ and $t \in [0,1]$ (see Figure \ref{stringlink} right). 
The {\it trivial} $n$-string link $1_n$ consists of $\sigma (I_i)={p_i} \times [0,1]$ for each $i=1, \cdots, n$.  
It is known that for fixed $n$, the set of link-homotopy classes of $n$-string links forms a group with multiplication induced by the composition of string links and the link-homotopy class $[1_n]$ of $1_n$ as the identity.  
We denote it by $\mathscr{H}(n)$.
Here the link-homotopy on string links is generated by ambient isotopies relative to endpoints of each strings and self-crossing changes. 

\begin{figure}[h]
$$
\raisebox{-22 pt}{\begin{overpic}[width=170pt]{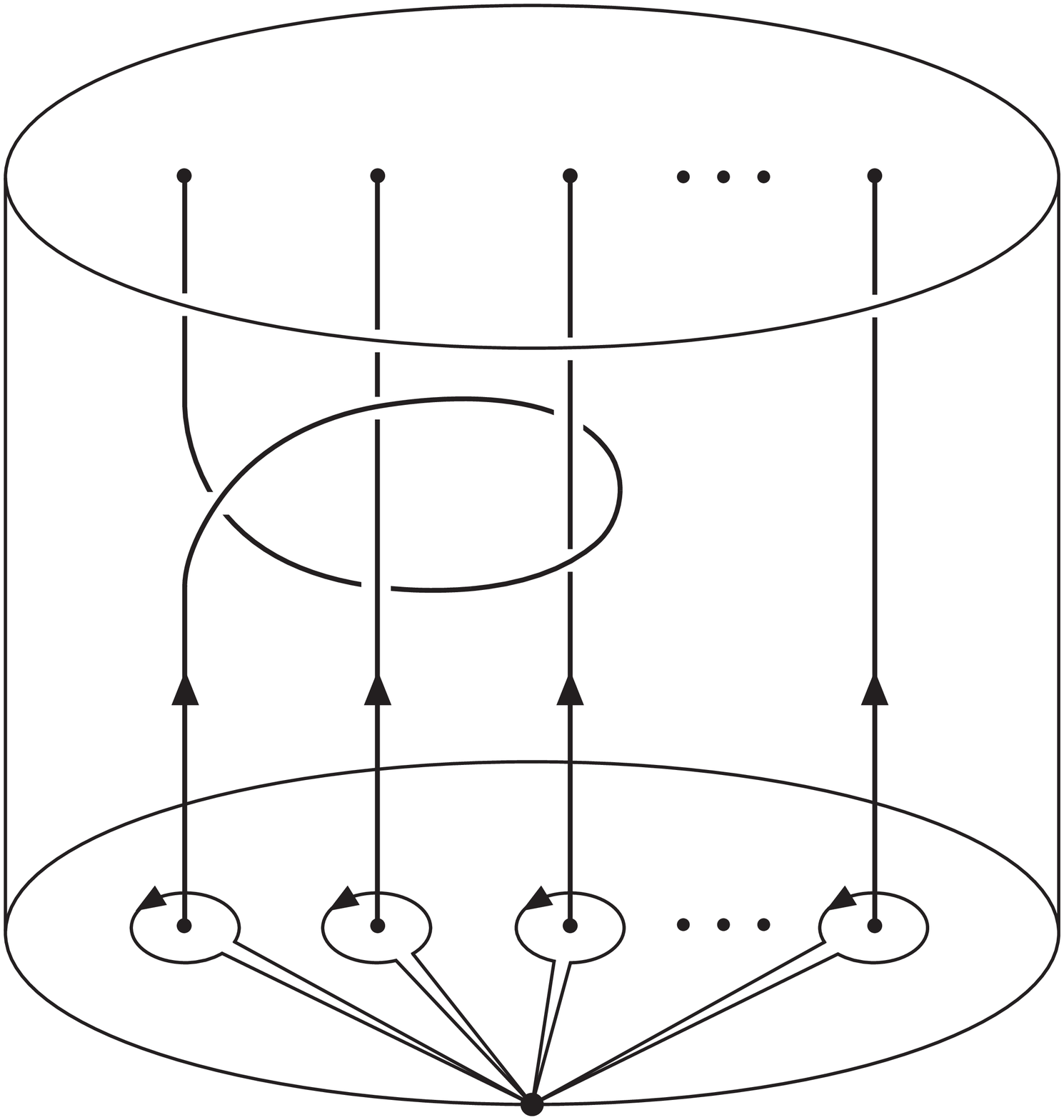}
\put(57,-11){base point} 
\put(11,38){$x_1$} \put(42,39){$x_2$} 
\put(74,39){$x_3$} \put(123,39){$x_n$}
\put(24,157){$p_1$} \put(55,157){$p_2$} 
\put(87,157){$p_3$} \put(136,157){$p_n$} 
\put(-43,173){$D^2\times [0,1]$} 
\end{overpic}} 
\hspace{2.0cm}
\raisebox{-0 pt}{\begin{overpic}[width=130pt]{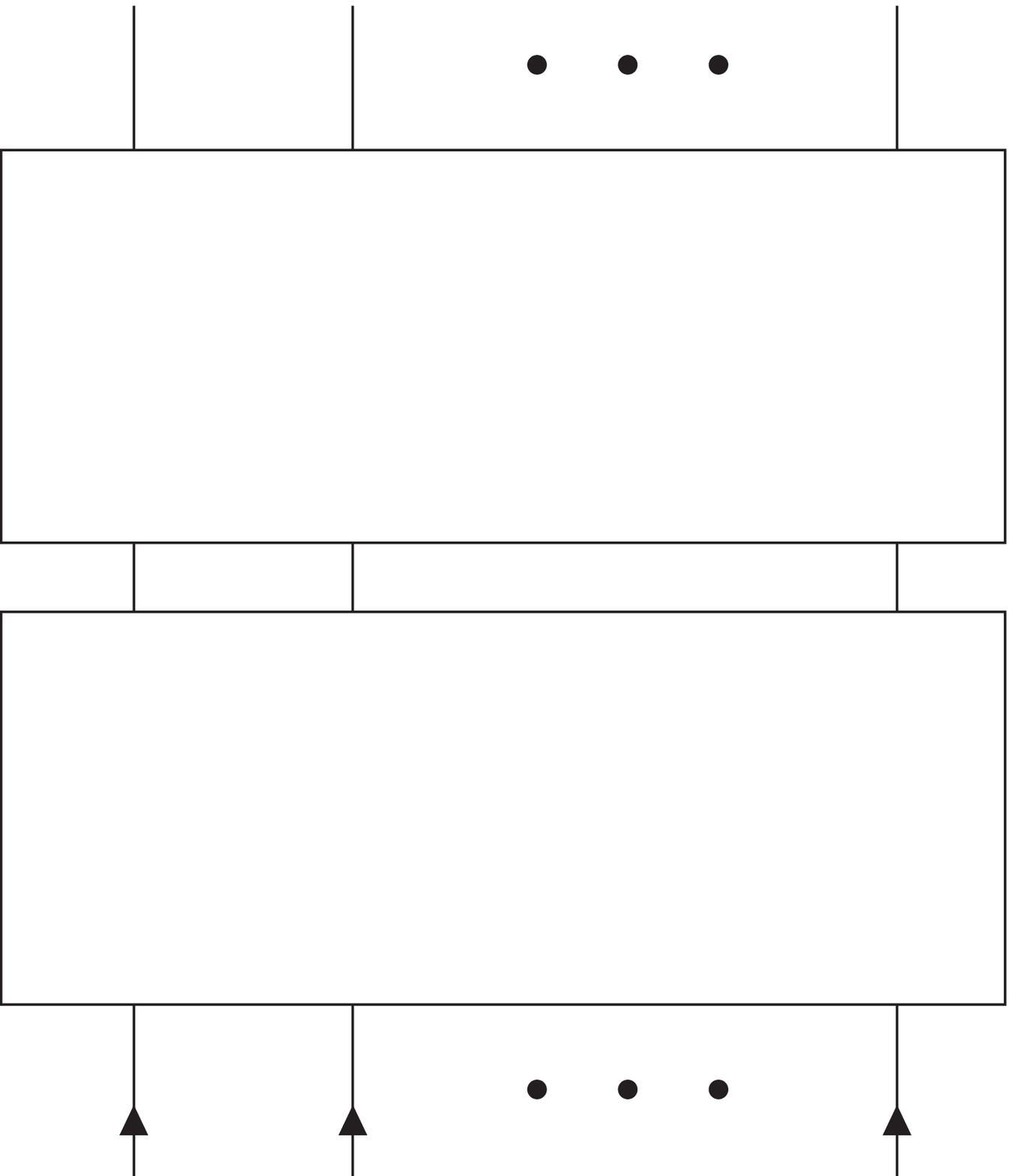}
\put(62,45){\Large $\sigma$} \put(62,103){\Large $\sigma'$}
\put(14,-13){$1$} \put(43,-13){$2$} 
\put(113,-13){$n$} 
\end{overpic}} 
$$ 
\if0
$$
\raisebox{-20 pt}{\begin{overpic}[width=150pt]{string-link-example02-2-2.eps}
\put(-1,25){$x_1$} \put(38,25){$x_2$} 
\put(78,25){$x_3$} \put(116,25){$x_4$}
\put(14,-13){$1$} \put(52.5,-13){$2$} 
\put(92,-13){$3$} \put(131,-13){$4$} 
\end{overpic}} 
\hspace{1.8cm}
\raisebox{-38 pt}{\begin{overpic}[width=130pt]{string-link-prod.eps}
\put(62,45){\Large $\sigma$} \put(62,103){\Large $\sigma'$}
\put(14,-13){$1$} \put(43,-13){$2$} 
\put(113,-13){$n$} 
\end{overpic}} 
$$ 
\fi
\vspace{0.0cm}
\caption{An $n$-string link and a composition.}\label{stringlink}
\end{figure}

\if0
\begin{figure}[h]
$$
\raisebox{-20 pt}{\begin{overpic}[width=150pt]{string-link-prod.eps}
\end{overpic}} 
$$ 
\caption{A composition.}\label{composition}
\end{figure}
\fi

Let $F(n)$ be a free group generated by $x_1, \cdots , x_n$ corresponding to the loops in Figure \ref{stringlink} left and $RF(n)$ the quotient group of $F(n)$ obtained by adding relations that each $x_i$ commutes with all of its conjugations.
It is known that for each $i$, any $\sigma \in \mathscr{H}(n)$ can be decomposed as $\theta_i g_i$ corresponding to the decomposition $\mathscr{H}(n) = \mathscr{H}(n-1) \ltimes RF_i(n-1)$ obtained from the map $\mathscr{H}(n) \rightarrow \mathscr{H}(n-1)$ given by omission of the $i$-th string, where $RF_i(n-1)$ is generated by the generators except for $x_i$. Figure \ref{decomposition} is an example. 
\begin{figure}[h]
$$
\raisebox{-20 pt}{\begin{overpic}[width=100pt]{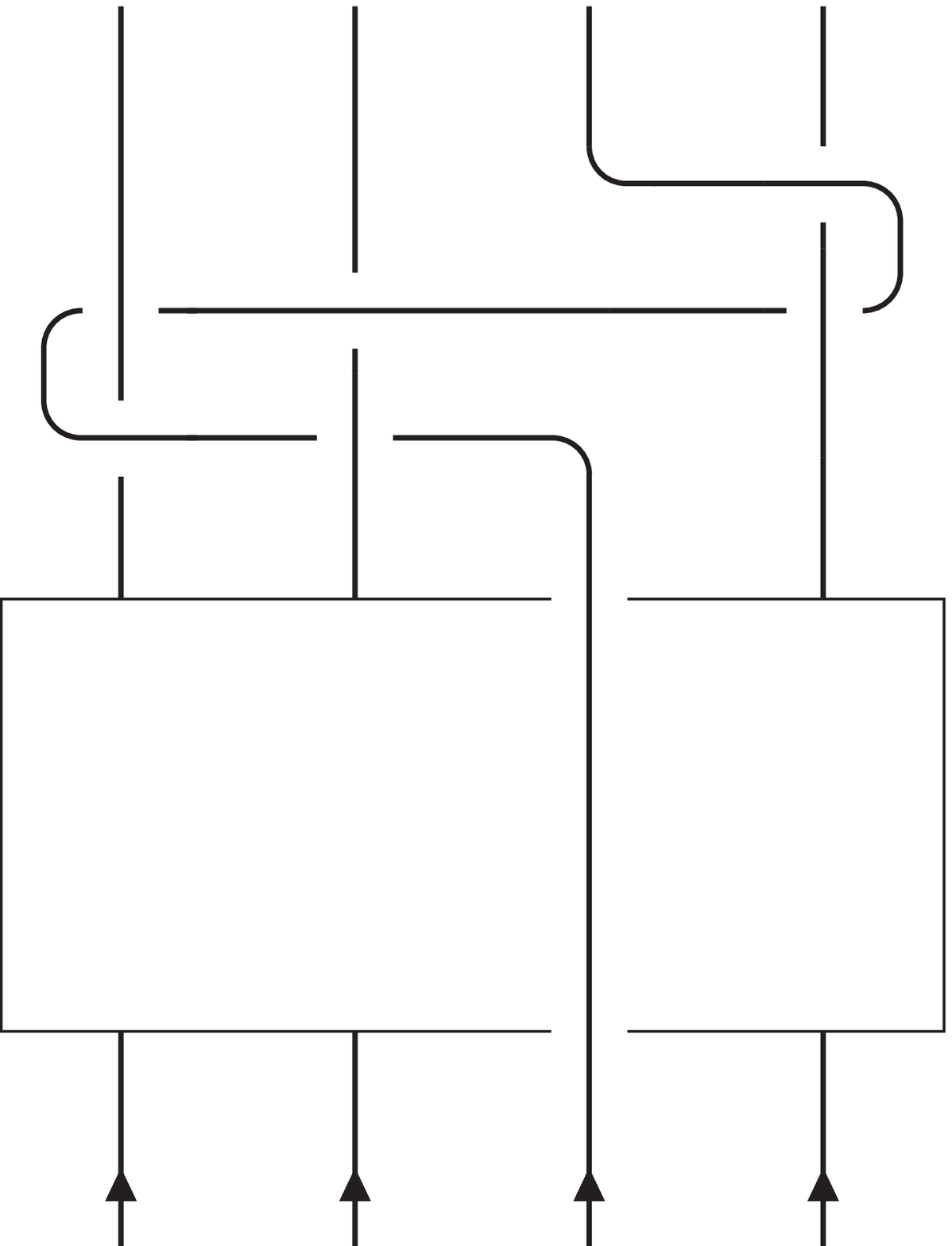}
\put(103,98){\large $g_3=x_2x_1^{-1}x_4^{-1}$} \put(40,41){\large $\theta_3$}
\put(10,-13){$1$} \put(35,-13){$2$} 
\put(60,-13){$3$} \put(84,-13){$4$} 
\end{overpic}} 
$$ 
\if0
$$
\raisebox{-20 pt}{\begin{overpic}[width=120pt]{string-link-decomposition03.eps}
\put(122,98){\Large $g_i$} \put(40,41){\Large $\theta_i$}
\put(10,-13){$1$} \put(29,-13){$i\!-\!1$} 
\put(58,-13){$i$} \put(71,-13){$i\!+\!1$} 
\put(103,-13){$n$} 
\end{overpic}} 
$$ 
\fi
\vspace{0.0cm}
\caption{A decomposition.}\label{decomposition}
\end{figure}


\begin{definition}
Let $\sigma \in \mathscr{H}(n)$ be decomposed as $\theta_i g_i$.
A {\it partial conjugation} of $\sigma$ is an element of $\mathscr{H}(n)$ of the form $\theta_i g_i^h$ with $h \in RF_i(k-1)$, where $g_i^h=hg_ih^{-1}$. 
\end{definition}

Habegger and Lin gave the following Markov-type theorem for $\mathscr{H}(n)$, which is an improved version by Hughes's claim \cite{H} that the partial conjugations generate the conjugations.

\begin{theorem}[\cite{HL}, \cite{H}]\label{Markov-type}
Let $\sigma$ and $\sigma'$ be in $\mathscr{H}(n)$. Then the closures of $\sigma$ and $\sigma'$ are link-homotopic if and only if there is a finite sequence $\sigma=\sigma_0, \sigma_1, \cdots, \sigma_m=\sigma'$ 
of elements of $\mathscr{H}(n)$ such that $\sigma_{k}$ is a partial conjugation of $\sigma_{k-1}$ for each $k=1, \cdots, m$. 
\end{theorem}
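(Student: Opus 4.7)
The plan is to prove the two directions separately. For the ``if'' direction, it suffices to check that a single partial conjugation $\theta_i g_i \mapsto \theta_i g_i^h$ with $h \in RF_i(n-1)$ induces a link-homotopy of the closure. Geometrically, $g_i \in RF_i(n-1)$ encodes the $i$-th strand as a word in meridians of the other strands, and hence represents the homotopy class of the closed $i$-th component in the complement of the other components; but this class is only well-defined up to conjugation, since there is no canonical basepoint on a closed curve, so replacing $g_i$ by its conjugate $hg_ih^{-1}$ yields the same free homotopy class. Because the other components are unchanged, this free homotopy lifts to a link-homotopy of the whole closure.

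For the converse direction, I would proceed in two steps corresponding to the contributions of Habegger--Lin and of Hughes. First I would show that two string links with link-homotopic closures are connected by a sequence of conjugations and partial conjugations. Fix the bounding disk $D^2\times\{0\}$ that defines the string-link structure and put a given link-homotopy between $\hat{\sigma}$ and $\hat{\sigma}'$ in general position with respect to this disk. The link-homotopy then decomposes into elementary moves: self-crossing changes and ambient isotopies local to a ball disjoint from the disk (neither of which changes the string-link class), events in which a strand sweeps across the disk (which, via the semidirect-product decomposition $\mathscr{H}(n) = \mathscr{H}(n-1)\ltimes RF_i(n-1)$, correspond to partial conjugations), and ambient isotopies that drag the disk as a whole around the closed link (which correspond to conjugations in $\mathscr{H}(n)$). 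A case-by-case verification that these exhaust the possibilities gives the original Habegger--Lin version of the theorem.

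The second step is Hughes's observation that every conjugation is already a composition of partial conjugations. I would decompose a conjugation $\sigma \mapsto \tau \sigma \tau^{-1}$ by iteratively applying the semidirect-product structure to write $\tau$ as a product of $x_j^{\pm 1}$-factors and $\mathscr{H}(n-1)$-factors, reducing the claim to showing that conjugation by each generator $x_j$ is a composition of partial conjugations. For the $i$-th-string block with $i \neq j$, this is literally a partial conjugation since $x_j \in RF_i(n-1)$; when $i = j$, the action on the $j$-th block is trivial because $x_j$ commutes with its conjugates in $RF(n)$. Combining these verifications with the first step completes the reduction.

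The main obstacle will be the elementary-move analysis in step one of the converse: one must check that every local event of the link-homotopy translates into one of the listed operations on $\mathscr{H}(n)$. The delicate points are (a) isotopies that drag a strand through the bounding disk, which must be shown to correspond precisely to the multiplication coming from the semidirect product, and (b) organizing the conjugations produced along the way so that Hughes's reduction can be applied coherently in step two.
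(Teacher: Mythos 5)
This statement is quoted in the paper from \cite{HL} and \cite{H}; the paper supplies no proof of it, so there is no in-paper argument to compare yours against. Judged on its own terms, your proposal has a genuine gap in the ``only if'' direction. The claim that a generic link-homotopy of the closures, made transverse to the disk $D^2\times\{0\}$, decomposes into events each of which is (i) a local move away from the disk, (ii) a strand sweeping across the disk realizing a partial conjugation, or (iii) a global drag of the disk realizing a conjugation, \emph{is} the content of the theorem, and you do not establish it --- you defer it to ``a case-by-case verification that these exhaust the possibilities.'' The delicate point is not enumerating local pictures but showing that an arbitrary passage of a strand through the disk changes the string link exactly by an element of the subgroup generated by the $(\overline{x}_j,\overline{x}_j)_i$ and $(\overline{x}_j,x_j)_i$, and by nothing more general in $\mathscr{H}(2n)$. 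Habegger and Lin do not argue by transversality at all: they set up the action of $\mathscr{H}(2n)$ on $\mathscr{H}(n)$ (Figure \ref{action}), show that two string links have link-homotopic closures if and only if they lie in the same orbit of the stabilizer of the trivial string link, and then identify generators of that stabilizer; the identification of generators is the hard algebraic step your sketch omits.

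There is also an imprecision in your Hughes step. A conjugation $\sigma\mapsto\tau\sigma\tau^{-1}$ acts on the whole string link at once, whereas a partial conjugation $\theta_ig_i\mapsto\theta_ig_i^{h}$ modifies a single block of the semidirect-product decomposition while the decomposition itself is being used to define the operation. Saying that conjugation by $x_j$ ``is literally a partial conjugation'' on the $i$-th block for $i\neq j$ conflates these: one must actually compute how the decomposition $\theta_i'g_i'$ of $\tau\sigma\tau^{-1}$ relates to $\theta_ig_i$, and the conjugating element entering $g_i'$ is not $x_j$ itself but an image of $\tau$'s longitude, which is what Hughes's argument (and Proposition \ref{ConjRel}(2) of this paper, which records the resulting relation $\prod_{i\neq j}\overline{x}_{ij}=\prod_{i\neq j}cx_{ij}$) actually controls. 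The ``if'' direction of your proposal --- that a partial conjugation does not change the free homotopy class of the closed $i$-th component and hence not the link-homotopy class of the closure --- is correct in outline.
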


The partial conjugation is induced by the following group action on $\mathscr{H}(n)$.
The action $\Sigma \cdot \sigma$ of $\Sigma \in \mathscr{H}(2n)$ on $\sigma \in \mathscr{H}(n)$ is defined as illustrated in Figure \ref{action}.
Here we forget the orientation of $\Sigma$ once and give an orientation compatible with that of $\sigma$. 

\begin{figure}[h]
$$
\raisebox{10 pt}{\begin{overpic}[width=180pt]{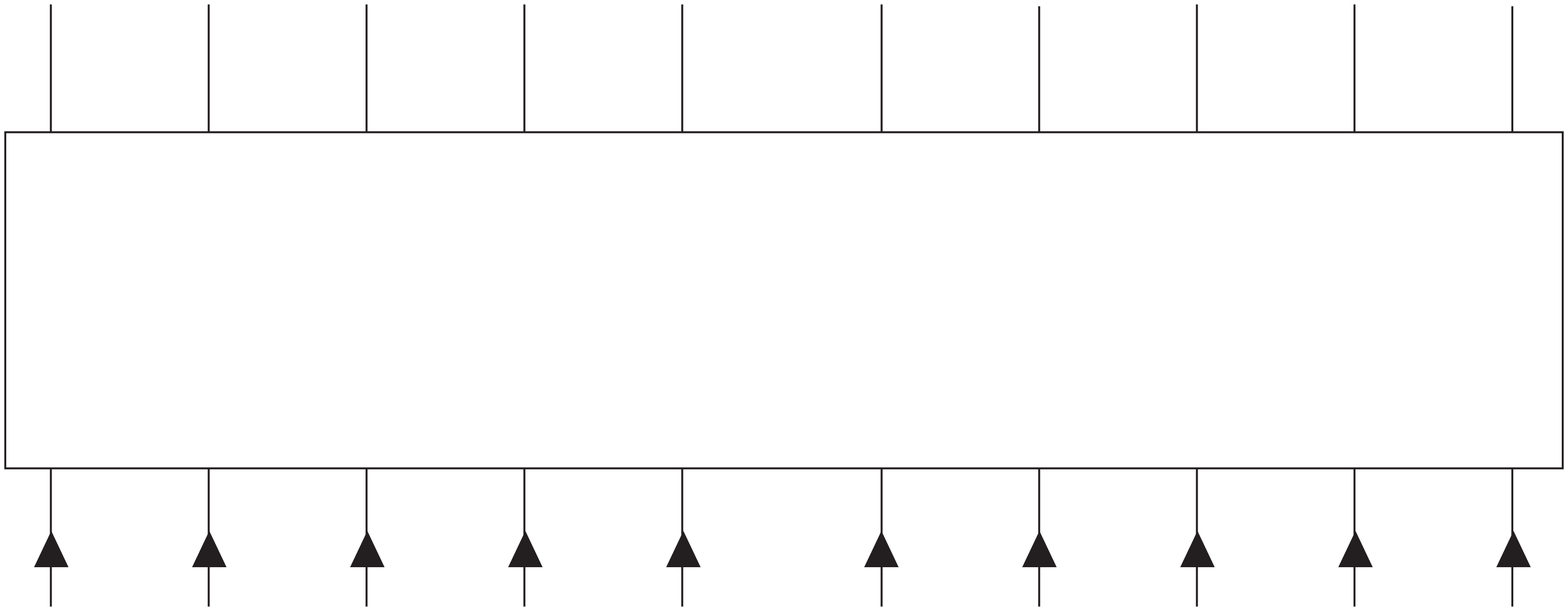}
\put(84,30){\large$\Sigma$}
\put(75,-13){$\overline{1}$} \put(57,-13){$\overline{2}$} 
\put(25,-11){$\cdots$} \put(2,-11){$\overline{n}$}
\put(98,-11){$1$} \put(116,-12){$2$} 
\put(139,-11){$\cdots$} \put(171,-10){$n$}
\end{overpic}} 
\hspace{2.0cm}
\raisebox{-20 pt}{\begin{overpic}[width=180pt]{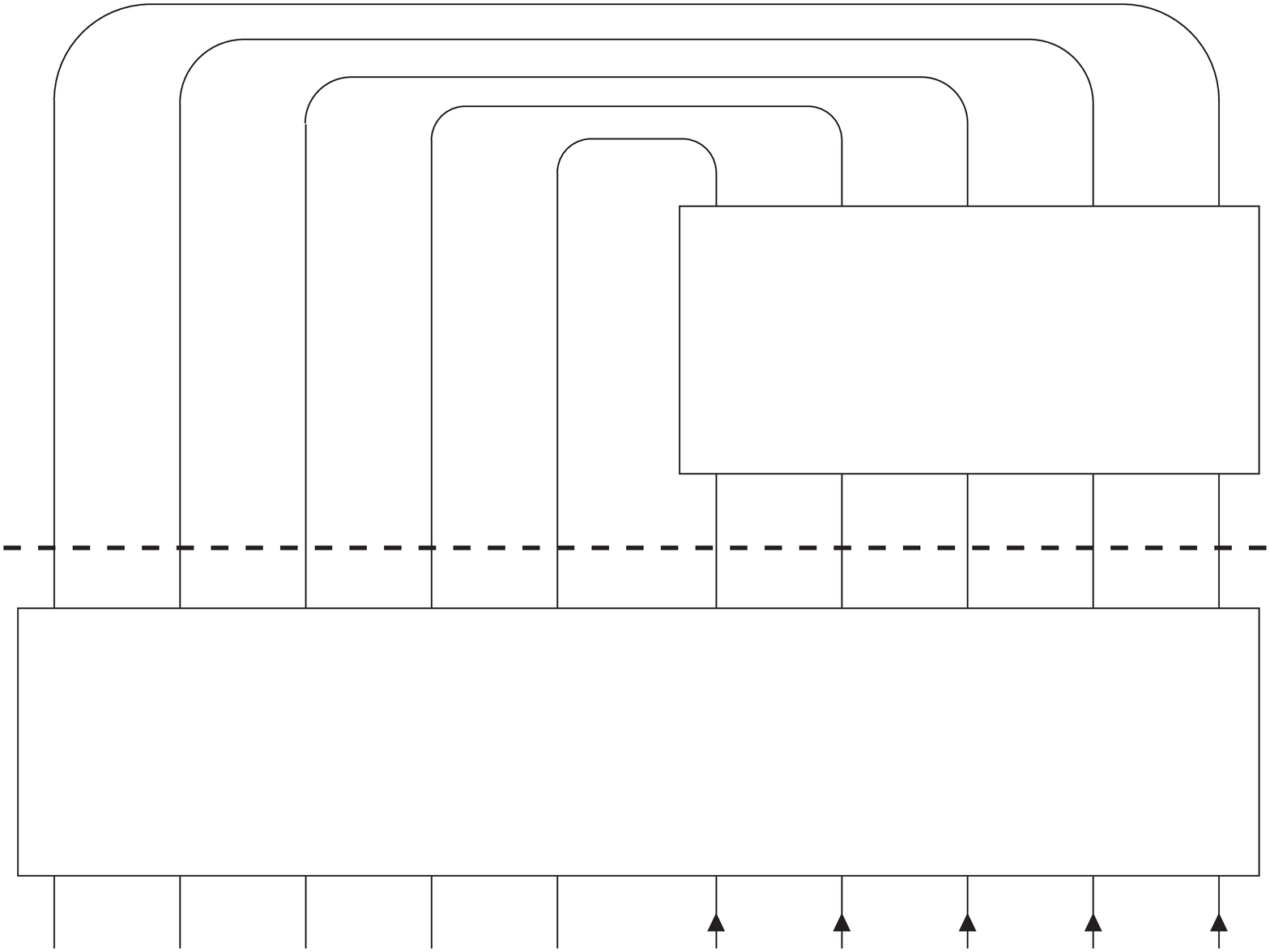}
\put(133,83){\large$\sigma$} \put(84,24){\large$\Sigma$}
\put(98,-12){$1$} \put(130,-11){$\cdots$} \put(169,-11){$n$}
\end{overpic}} 
$$ 
\vspace{0.1cm}
\caption{An action of $\mathscr{H}(2n)$ on $\mathscr{H}(n)$.}\label{action}
\end{figure}

Let $(\overline{x}_j,\overline{x}_j)_i$ ($1 \leq i \neq j \leq n$) be a $2n$-string link as illustrated in Figure \ref{partialconj}. The left figure is the case $j<i$ and the right figure is the case $i<j$.

\begin{figure}[h]
\vspace{0.4cm}
$$
\raisebox{-20 pt}{\begin{overpic}[width=183pt]{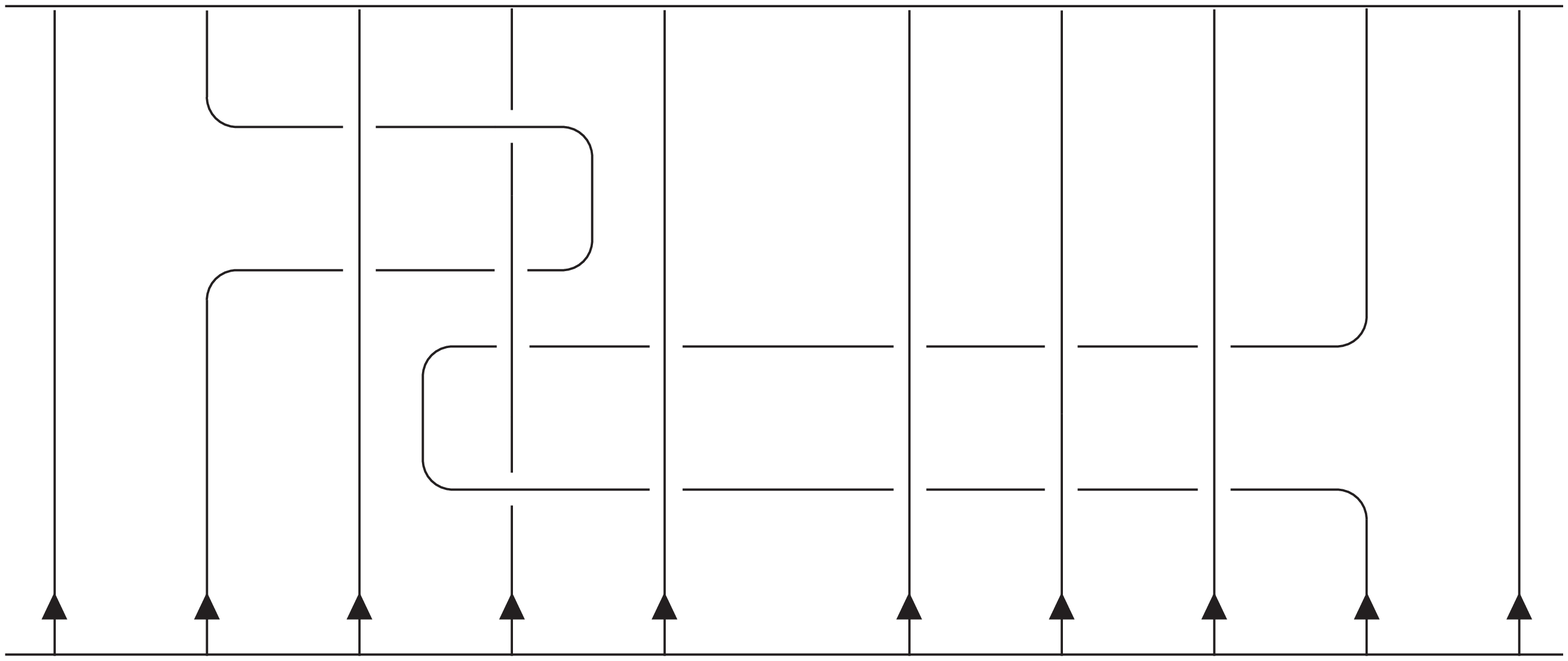}
\put(-2,-13){$\cdots$} \put(21,-14){$\overline{i}$} \put(34,-13){$\cdots$}\put(57,-14){$\overline{j}$} \put(70,-13){$\cdots$}
\put(97,-13){$\cdots$} \put(121,-14){$j$} \put(135,-13){$\cdots$} \put(157,-14){$i$} \put(170,-13){$\cdots$}
\end{overpic}} 
\hspace{1.3cm}
\raisebox{-20 pt}{\begin{overpic}[width=183pt]{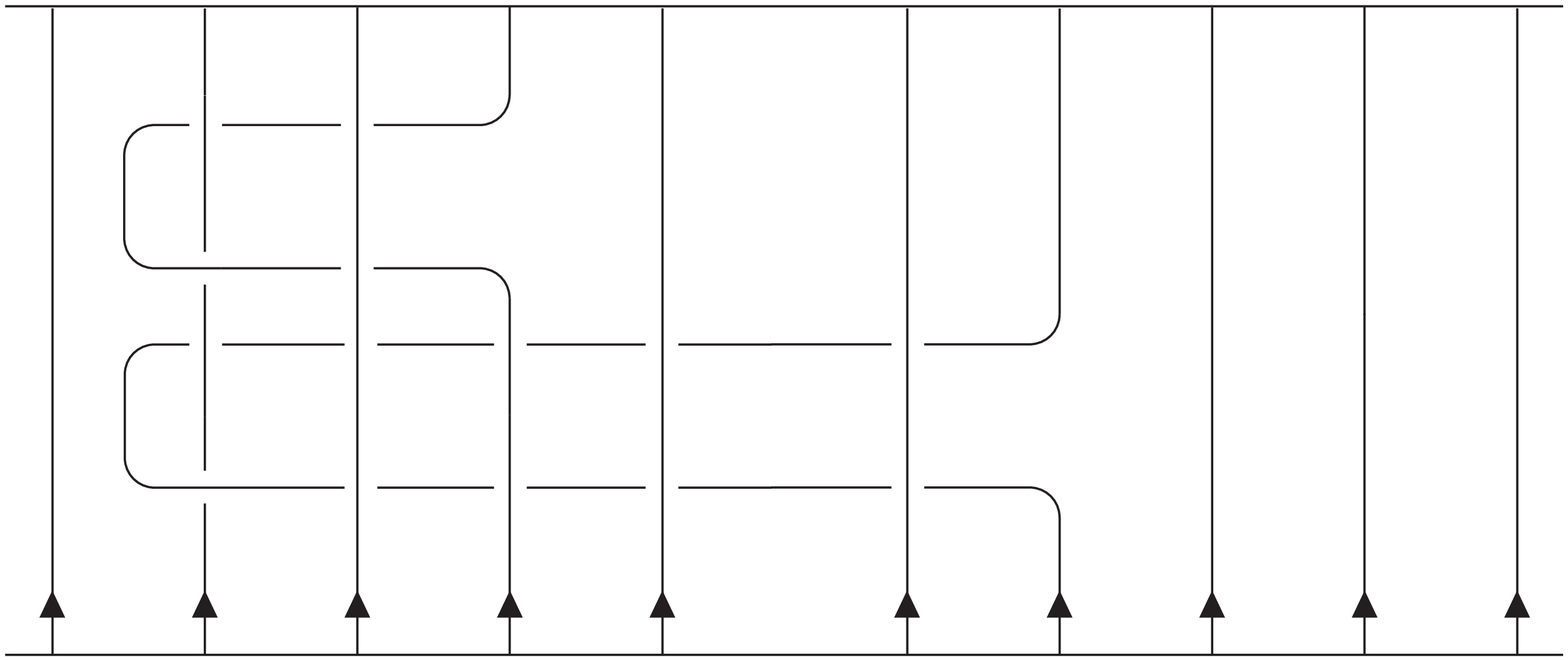}
\put(-2,-13){$\cdots$} \put(21,-14){$\overline{j}$} \put(34,-13){$\cdots$}\put(57,-14){$\overline{i}$} \put(70,-13){$\cdots$}
\put(97,-13){$\cdots$} \put(121,-14){$i$} \put(135,-13){$\cdots$} \put(157,-14){$j$} \put(170,-13){$\cdots$}
\end{overpic}} 
$$ 
\vspace{0.0cm}
\caption{Generators $(\overline{x}_i,\overline{x}_i)_j$ of partial conjugations.
}\label{partialconj}
\end{figure}

\if0
\begin{figure}[h]
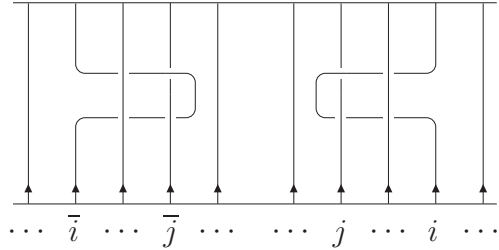

$$
\raisebox{-20 pt}{\begin{overpic}[width=180pt]{H2-elements-06.eps}
\put(-6,-13){$\cdots$} \put(17,-14){$\overline{j}$} \put(31,-13){$\cdots$} \put(55,-14){$\overline{i}$} \put(68,-13){$\cdots$}
\put(97,-13){$\cdots$} \put(121,-14){$i$} \put(135,-13){$\cdots$} \put(157,-14){$j$} \put(170,-13){$\cdots$}
\end{overpic}} 
\hspace{1.5cm}
\raisebox{-20 pt}{\begin{overpic}[width=180pt]{H2-elements-08.eps}
\put(-6,-13){$\cdots$} \put(17,-14){$\overline{i}$} \put(31,-13){$\cdots$} \put(55,-14){$\overline{j}$} \put(68,-13){$\cdots$}
\put(97,-13){$\cdots$} \put(121,-14){$j$} \put(135,-13){$\cdots$} \put(157,-14){$i$} \put(170,-13){$\cdots$}
\end{overpic}} 
$$ 
\vspace{0.0cm}
\caption{A generator $(x_j,x_j)_i$ of conjugations.}\label{partialconj2}
\end{figure}
\fi

The action of a subgroup generated by $(\overline{x}_j, \overline{x}_j)_i$ ($1 \leq i \neq j \leq n$) on $\mathscr{H}(n)$ induces the partial conjugations.
In fact, for a decomposed string link $\theta_i g_i$, we have $(\overline{x}_j,\overline{x}_j)_i \cdot \theta_i g_i  = \theta_i g_i^{x_j}$. Therefore for any $(\overline{x}_j,\overline{x}_j)_i$ and string link $\sigma$, the action give a partial conjugation of $\sigma$ and it is clear that any partial conjugation $\theta_i g_i^h$ of $\theta_i g_i$ is induced by $(\overline{x}_j,\overline{x}_j)_i$ for $j=1, \cdots i-1, i+1, \cdots, n$.
For the use in Section \ref{test}, we also introduce generators $(\overline{x}_j,x_j)_i$ of conjugations in $\mathscr{H}(2n)$ as illustrated in Figure \ref{conjugation}. 

\begin{figure}[h]
$$
\raisebox{-20 pt}{\begin{overpic}[width=183pt]{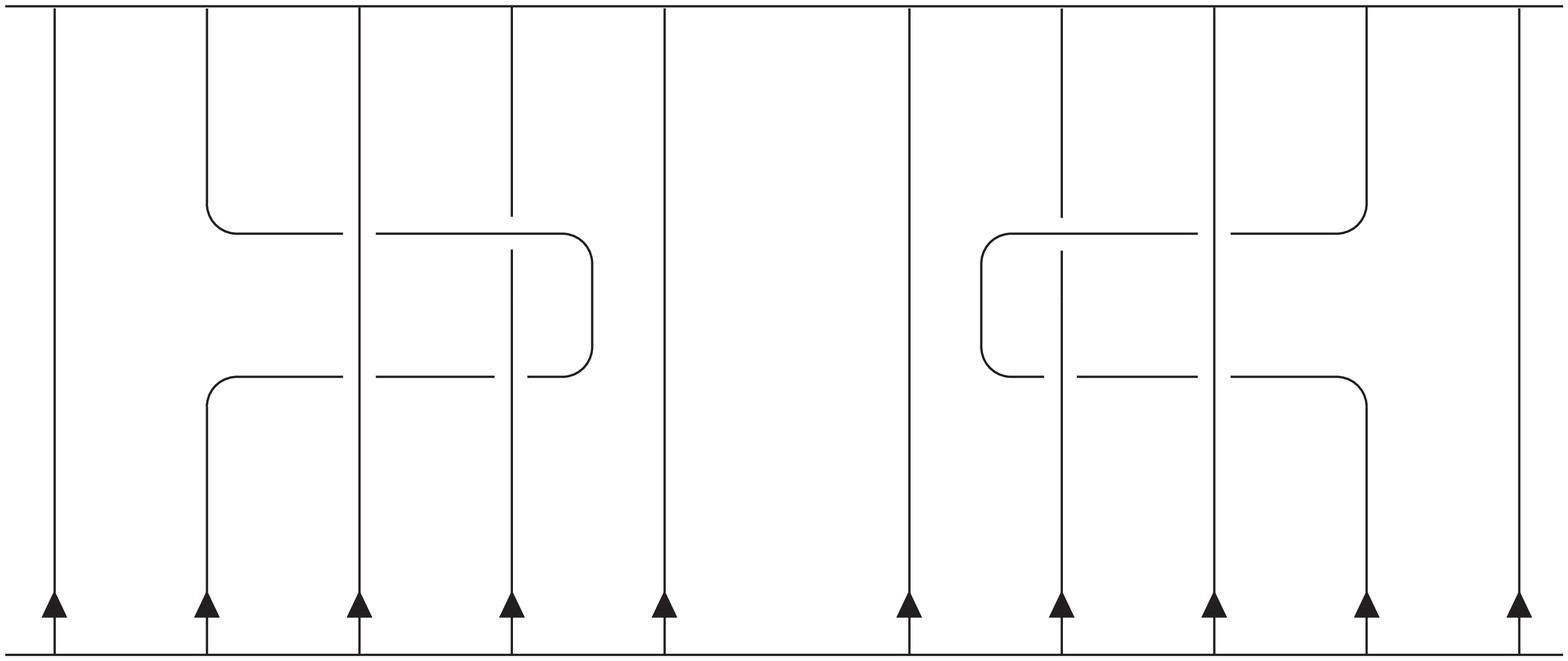}
\put(-2,-13){$\cdots$} \put(21,-14){$\overline{i}$} \put(34,-13){$\cdots$}\put(57,-14){$\overline{j}$} \put(70,-13){$\cdots$}
\put(98,-13){$\cdots$} \put(121,-14){$j$} \put(135,-13){$\cdots$} \put(157,-14){$i$} \put(170,-13){$\cdots$}
\end{overpic}}
$$ 
\vspace{0.0cm}
\caption{A generator $(\overline{x}_j,x_j)_i$ of conjugations.}\label{conjugation}
\end{figure}

\subsection{Claspers} 
\par
In this section, we introduce the clasper theory. For general definitions of claspers, we refer to Habiro \cite{Ha}. 

\begin{definition}\label{clasper}(\cite{Ha})
An embedded disk $T$ in $S^3$ ($D^2 \times [0,1]$) is said to be a {\em simple tree clasper} 
for a (string) link $L$ if it satisfies the following three conditions:
\begin{enumerate} 
\item The embedded disk $T$ is decomposed into bands and disks as follows. Each band connects two distinct disks and each disk attaches either one or three bands, where a disk attached at exactly one band is called a {\em leaf}. 
\item The embedded disk $T$ intersects the (string) link $L$ transversely so that each intersection is contained in the interior of the leaf. 
\item Each leaf intersects the (string) link $L$ at exactly one point.
\end{enumerate}
In this paper, we call a simple tree clasper with $k+1$ leaves a \emph{$C_k$-tree}. For simplicity, we express disks and bands as shown in Figure \ref{diskband} and call the band an {\em edge}. Furthermore, we express a positive (or negative) half-twist of a band by a circle with plus (or minus, respectively) on the edge, see Figure \ref{diskband}.

\begin{figure}[h]

$$\raisebox{-20 pt}{\begin{overpic}[width=40
pt]{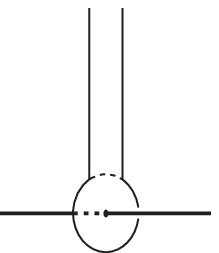}
\end{overpic}}
  \hspace{0.2cm} \mbox{\large$\longrightarrow$}
\hspace{0.2cm}
\raisebox{-20 pt}{\begin{overpic}[width=40
pt]{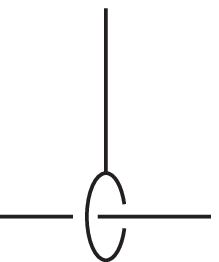}
\end{overpic}}\,,
\hspace{1.0cm}
\raisebox{-15 pt}{\begin{overpic}[width=50
pt]{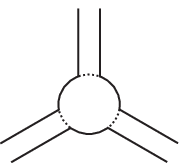}
\end{overpic}}
  \hspace{0.2cm} \mbox{\large$\longrightarrow$}
\hspace{0.2cm}
\raisebox{-15 pt}{\begin{overpic}[width=50
pt]{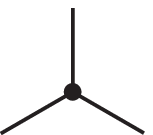}
\end{overpic}}\,
\hspace{1.0cm}
\raisebox{-20 pt}{\begin{overpic}[height=50
pt]{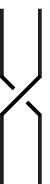}
\end{overpic}}
  \hspace{0.2cm} \mbox{\large$\longrightarrow$}
\hspace{0.2cm}
\raisebox{-20 pt}{\begin{overpic}[height=50
pt]{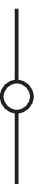}
\put(0,22){$+$}
\end{overpic}}\,
\hspace{1.0cm}
\raisebox{-20 pt}{\begin{overpic}[height=50
pt]{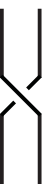}
\end{overpic}}
  \hspace{0.2cm} \mbox{\large$\longrightarrow$}
\hspace{0.2cm}
\raisebox{-20 pt}{\begin{overpic}[height=50
pt]{half-twist01.eps}
\put(0,22){$-$}
\end{overpic}}\,
\if0
\raisebox{-20 pt}{\begin{overpic}[height=50
pt]{half-twist02.eps}
\end{overpic}}
  \hspace{0.2cm} \mbox{$\overset{\mbox{l.h.}}{\sim}$}
\hspace{0.2cm}
\raisebox{-20 pt}{\begin{overpic}[height=50
pt]{half-twist03.eps}
\end{overpic}}
\fi
$$
    \caption{A band and a disk.}
    \label{diskband}
\end{figure}

\par
For a given $C_k$-tree $T$ for a (string) link $L$, there is a procedure to construct a framed link in a regular neighborhood of $T$. By surgery along the framed link, $L$ is changed into another (string) link in $S^3$ ($D^2 \times I$), so we can regard the surgery as a local move on $L$ as showed in Figure~\ref{Ck-tree}, called a $C_k$-move. We denote the obtained (string) link by $L_T$.
We remark that a $C_1$-move and a $C_2$-move take a band sum of a Hopf link and a Borromean ring respectively. 
The {\it $C_k$-equivalence} is the equivalence relation on (string) links generated by $C_k$-moves and ambient isotopies (with fixing boundary).
\begin{figure}[ht]
$$\raisebox{-15 pt}{\begin{overpic}[bb=0 0 217 73, width=160
pt]{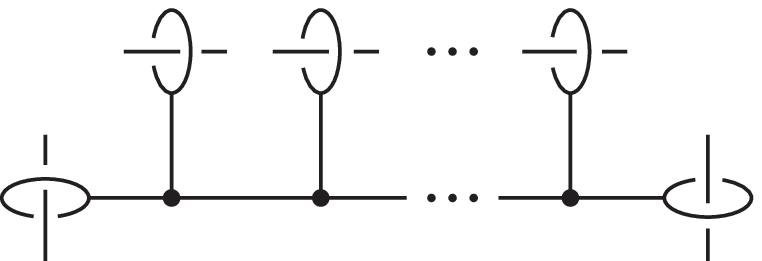}
\end{overpic}}
  \hspace{0.5cm} \mbox{\large$\xrightarrow[]{\rm surgery}$}
\hspace{0.5cm}
\raisebox{-33 pt}{\begin{overpic}[bb=0 0 229 9, width=180
pt]{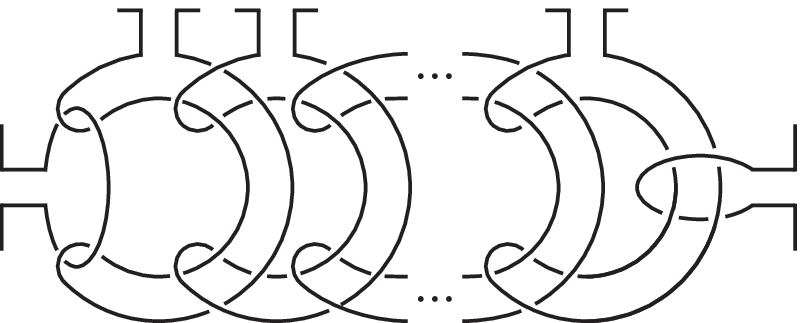}
\end{overpic}}$$
    \caption{A $C_k$-tree and a local move.}
    \label{Ck-tree}
\end{figure}
\end{definition}

The relation in Figure \ref{crossing-clasper} holds up to ambient isotopy. Thus a crossing change can be expressed by a $C_1$-tree. 
\begin{figure}[h]
$$
\raisebox{-20 pt}{\begin{overpic}[width=120pt]{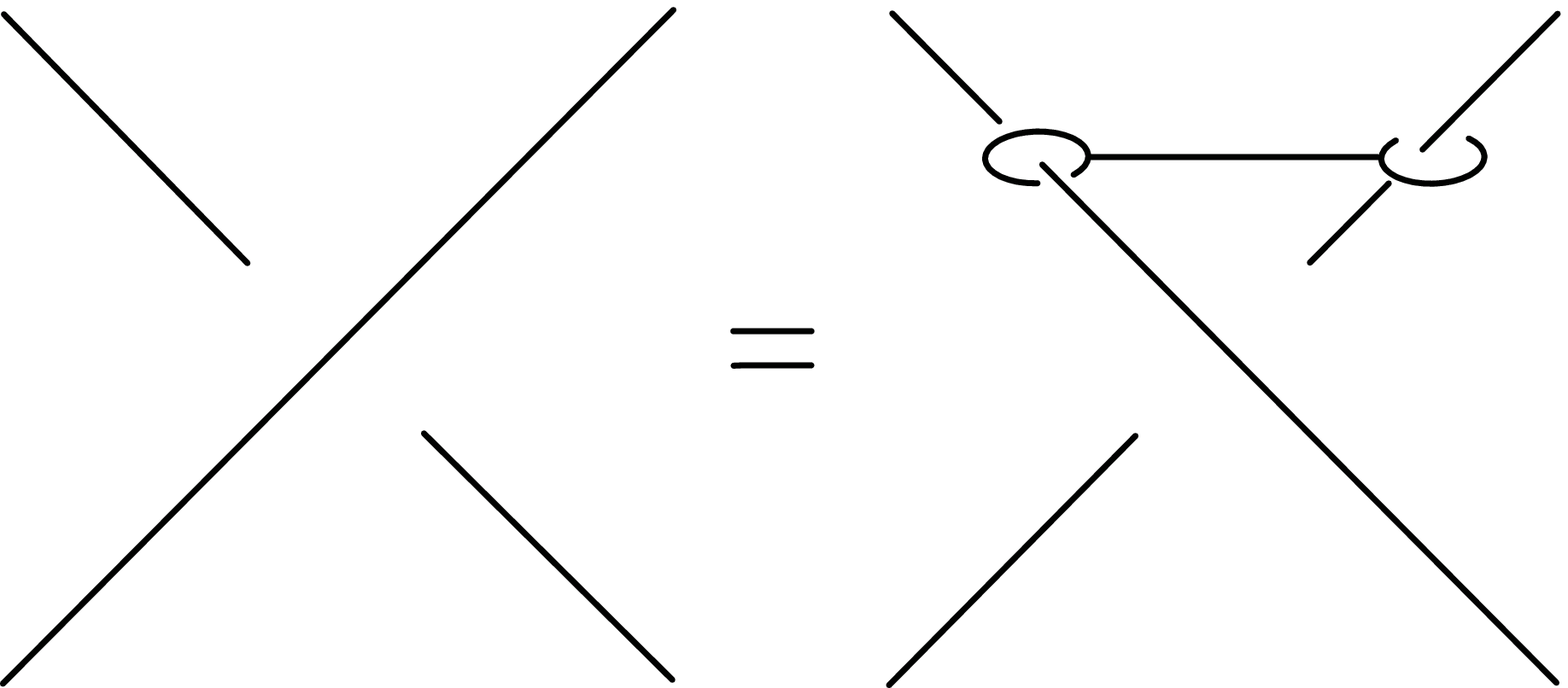}
\end{overpic}} 
$$ 
    \caption{A crossing change and a $C_1$-tree.}
    \label{crossing-clasper}
\end{figure}

\par
Since a surgery along $C_k$-trees corresponds to a band sum of a Hopf link or a Brunnian link, the following two lemmas hold.
 
\begin{lemma} \label{vanish}
If two leaves of a clasper are at the same component of a (string) link, the clasper vanishes up to link-homotopy.  
\end{lemma}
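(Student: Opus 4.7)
The plan is to prove the lemma by induction on the degree $k$ of the tree clasper, using the fact that self-crossing changes generate link-homotopy together with Habiro's leaf-sliding calculus.

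For the base case $k=1$, a $C_1$-tree has exactly two leaves, and by the figure relating a $C_1$-tree to a crossing change, the surgery $L \rightsquigarrow L_T$ is precisely a crossing change between the two arcs of $L$ meeting the leaves. If both leaves lie on the same component $K_i$, this crossing change is a self-crossing change of $K_i$, which is a generating move of link-homotopy; hence $L_T$ and $L$ are link-homotopic.

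For the inductive step $k \geq 2$, let $\ell_1$ and $\ell_2$ be two leaves of $T$ meeting the same component $K_i$. The strategy is to slide $\ell_1$ along $K_i$ toward $\ell_2$ and then cancel the pair. Pick an embedded arc $\alpha \subset K_i$ connecting $\ell_1 \cap K_i$ to $\ell_2 \cap K_i$ and isotope a small neighborhood of $\ell_1$ along $\alpha$. Each time the sliding leaf crosses another leaf or edge of the clasper, apply Habiro's leaf/edge-sliding identity, which rewrites $T$ (up to ambient isotopy) as the slid clasper $T'$ together with a higher-degree auxiliary $C_{k'}$-tree ($k' > k$). By construction, every such auxiliary clasper inherits a leaf on $K_i$ that is a parallel copy of $\ell_1$, hence still has two leaves on $K_i$; so (viewing the argument as a downward induction on the maximum degree needed, which is bounded since higher-degree tree claspers eventually become vacuous or are themselves handled by the induction hypothesis on a simpler configuration) these auxiliaries vanish up to link-homotopy.

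Once $\ell_1$ and $\ell_2$ are adjacent, realized as two parallel leaves on $K_i$, a standard parallel-leaves cancellation move in clasper calculus either reduces $T$ to the trivial clasper directly or splits it as a product of simpler claspers, each still carrying two leaves on $K_i$, which vanish by the inductive hypothesis. The main technical obstacle is the bookkeeping for the sliding moves: each slide produces higher-degree error terms, so we must verify that the induction closes. This is ensured precisely because every error term, by the geometry of the slide, retains two leaves on $K_i$, and because up to link-homotopy we may freely apply self-crossing changes to discard any residual intra-component interactions at every degree.
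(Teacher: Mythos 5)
Your base case is fine, but the inductive step has two genuine gaps, and they sit exactly where the content of the lemma lives. First, the termination of your induction is circular as stated. Sliding $\ell_1$ along $K_i$ produces error terms of \emph{strictly higher} degree, so the hypothesis ``for smaller $k$'' never applies to them; you appeal instead to a ``downward induction'' grounded in the claim that high-degree tree claspers are eventually vacuous up to link-homotopy. But the standard reason a $C_k$-tree with $k\geq n$ vanishes up to link-homotopy is that it has $k+1>n$ leaves and hence two leaves on the same component --- i.e., it is precisely the high-degree instance of the lemma you are proving. Without an independent bound (e.g.\ nilpotency of the link-homotopy string link group, which would be a much heavier input than the lemma itself), the induction does not close. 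Second, the final ``standard parallel-leaves cancellation'' is not one of the moves available here: Lemma \ref{clasperlemma}(5) cancels two parallel \emph{trees} (one twisted, one not), not two adjacent leaves of a \emph{single} tree on the same strand. That cancellation is exactly the nontrivial geometric fact, and asserting it as standard leaves the heart of the proof unproved.

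The paper takes a different and much shorter route that bypasses both problems: surgery along a $C_k$-tree is a band sum of a Hopf link ($k=1$) or of a Brunnian link of Borromean/iterated-Bing-double type ($k\geq 2$). If two leaves lie on the same component $K_i$, then two components of that Brunnian link are banded into $K_i$, so crossing changes between them become self-crossing changes of $K_i$; and a single such crossing change completely unlinks the Brunnian link (as it does for the Borromean rings and their iterated doubles). Hence the surgery is undone by link-homotopy directly, with no induction and no error terms. If you want to salvage your clasper-calculus argument, you must either import the Brunnian-link picture to justify the final cancellation, or find an induction parameter that genuinely decreases --- degree alone does not.
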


\begin{lemma} \label{edge-self-change}
A crossing change between edges belonging to a $C_k$-tree is achieved by link-homotopy.
\end{lemma}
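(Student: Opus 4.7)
The plan is to combine an edge-crossing identity from Habiro's clasper calculus \cite{Ha} with Lemma \ref{vanish}: a crossing change between two edges of $T$ is absorbed by surgery along an auxiliary clasper whose leaves are forced to repeat on a single component of $L$, so the auxiliary surgery is link-homotopically trivial.

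First I would localize the crossing. By an ambient isotopy of $T$ that fixes the leaves (and therefore does not alter $L_T$ up to isotopy), arrange the two crossing edges $e$ and $e'$ to meet transversely at a single point inside a small ball $B$, with $T\cap B$ consisting of only those two arcs of edges. The remainder of $T$, and in particular every leaf of $T$, lies outside $B$.

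Next I would invoke the edge-crossing identity of Habiro's clasper calculus, which expresses the result of swapping the crossing of two edges of a clasper as $L_{T'}=L_{T\cup \widehat{T}}$ for an auxiliary collection $\widehat{T}$ of tree claspers produced by the standard zip construction applied at $e$ and $e'$. Structurally, cutting $T$ at its two crossing edges decomposes $T$ into sub-pieces whose leaves together cover the leaves of $T$, and the zip glues duplicated copies of these sub-pieces into $\widehat{T}$. Consequently each component of $\widehat{T}$ has its leaves given by parallel copies of leaves of $T$, and at least one component of $\widehat{T}$ carries two leaves that are parallel copies of a single leaf of $T$; those two leaves lie on the same component of $L$.

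Finally, Lemma \ref{vanish} kills each such component of $\widehat{T}$ up to link-homotopy, and applying the same reasoning to any remaining components removes $\widehat{T}$ entirely. Hence $L_T$ and $L_{T'}$ are link-homotopic. The main obstacle is verifying the duplicated-leaf property of $\widehat{T}$ in every configuration of $(e,e')$ inside $T$ (adjacent versus non-adjacent edges, and leaf-adjacent versus interior edges); this is a finite case analysis but is handled uniformly by unpacking the zip construction in each case and noting that cutting a tree at two edges always yields sub-pieces whose combined leaves contain a repetition when recombined with duplication.
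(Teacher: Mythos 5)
Your overall strategy --- realize the edge crossing as surgery along auxiliary claspers $\widehat{T}$ and then kill every auxiliary piece with Lemma \ref{vanish} --- is sound and is the standard route in the clasper literature; note that the paper itself offers only the one-sentence justification that surgery along a $C_k$-tree is a band sum with a Brunnian link, so your argument is in any case more detailed than the paper's. However, there is a genuine gap at the decisive combinatorial step. For Lemma \ref{vanish} to finish the proof you need \emph{every} component of $\widehat{T}$ to carry two leaves on the same component of $L$, but you only assert this for ``at least one component'' and defer the rest to ``the same reasoning.'' Worse, your structural description of $\widehat{T}$ --- components glued from duplicated copies of the sub-pieces obtained by cutting $T$ at the two edges $e$ and $e'$ --- is compatible with a component of the form (one side of the $e$-cut) joined to (the opposite side of the $e'$-cut); the leaf sets of those two sub-pieces are \emph{disjoint} subsets of the leaves of $T$, so such a component would have no repeated leaf and would not vanish. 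Nothing in your write-up rules this shape out, and since the duplicated-leaf property is exactly the content of the lemma, it remains unproved.

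One way to close the gap: instead of zipping at the crossing, slide the crossing point along $e'$ away from $e$. At each trivalent vertex the encircling finger of $e$ splits over the two outgoing edges by an isotopy, so the single crossing between $e$ and $e'$ is traded for a collection of crossings between $e$ and the leaves of the subtree of $T$ lying on the far side of $e'$, that is, for crossings between $e$ and the corresponding components of $L$. Each of these is handled by Lemma \ref{clasperlemma}(7): the correction tree is a \emph{full} copy of $T$ together with one extra leaf on a component already met by a leaf of $T$, hence it has two leaves on the same component and dies by Lemma \ref{vanish}. One should also say a word about the higher-order remainders in these identities, which hold only up to $C_m$-equivalence; for link-homotopy this is harmless, since any simple tree whose degree is at least the number of components of $L$ automatically has a repeated leaf and vanishes for the same reason.
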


We recall the relations of the claspers up to link-homotopy. 
We prepare the following lemma about local moves by claspers to transform shapes of (string) links up to link-homotopy (see also \cite{KM2}).
\begin{lemma}[\cite{Ha,MY}, \cite{M} for (6)] \label{clasperlemma}
The following equations hold  up to link-homotopy. \\
$(1)$ A crossing change between a $C_i$-tree and a $C_j$-tree makes a new $C_{i+j+2}$-tree as shown in Figure~$\ref{c1clasper}(1)$. \\
$(2)$ An exchange of leaves of a $C_i$-tree and a $C_j$-tree makes a new $C_{i+j+1}$-tree as shown in Figure~$\ref{c1clasper} (2)$.\\
$(3)$ A full-twist of an edge of a $C_i$-tree vanishes $($Figure $\ref{c1clasper}(3))$. \\
$(4)$ A half-twist of an edge of a $C_i$-tree can be moved to another incident edge $($Figure~$\ref{c1clasper}(4))$. \\
$(5)$ If two $C_i$-trees are parallel where one of them is twisted and the other is non-twisted, then they are canceled $($Figure~$\ref{c1clasper}(5))$. \\
$(6)$ The relation shown in Figure~$\ref{c1clasper}(6)$ holds. We refer the relation by an \textit{IHX relation}.
$(7)$ A crossing change between a $C_k$-tree and a link component makes a new $C_{k+1}$-tree as shown in Figure~$\ref{c1clasper}(7)$, where the bold arcs are link components, the new $C_{k+1}$-tree in the right-hand side is a copy of the $C_k$-tree with a new vertex and a new edge connecting the vertex with the new leaf intersecting with the component and the new $C_{k+1}$-tree has an extra minus half-twist.

\begin{figure}[h]
$$(1)\hspace{0.4cm}\raisebox{-27 pt}{\begin{overpic}[width=150pt]{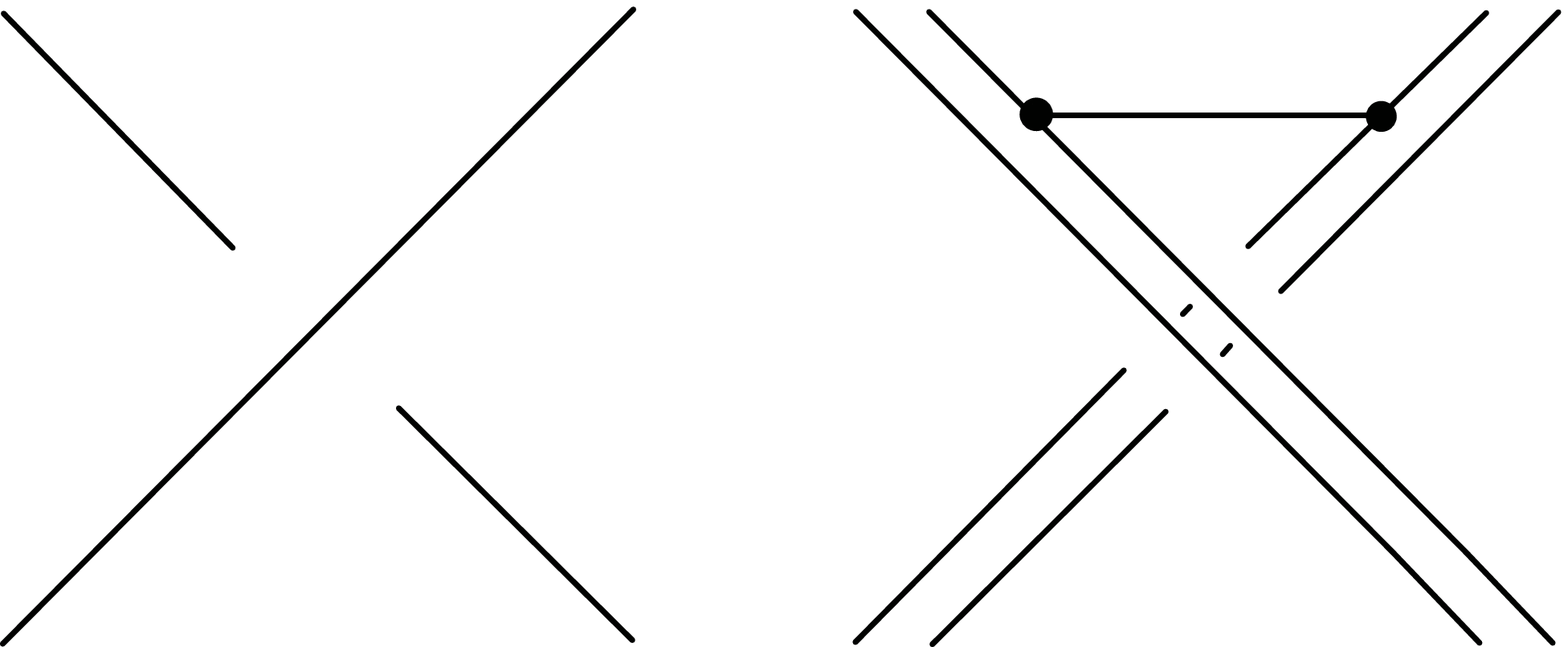}
\put(67,28){$=$}\put(3,60){$C_i$}\put(41,60){$C_j$}\put(100,60){$C_{i+j+2}$}
\end{overpic}}
\hspace{1.0cm}
(2)\hspace{0.4cm}
\raisebox{-32 pt}{\begin{overpic}[width=170pt]{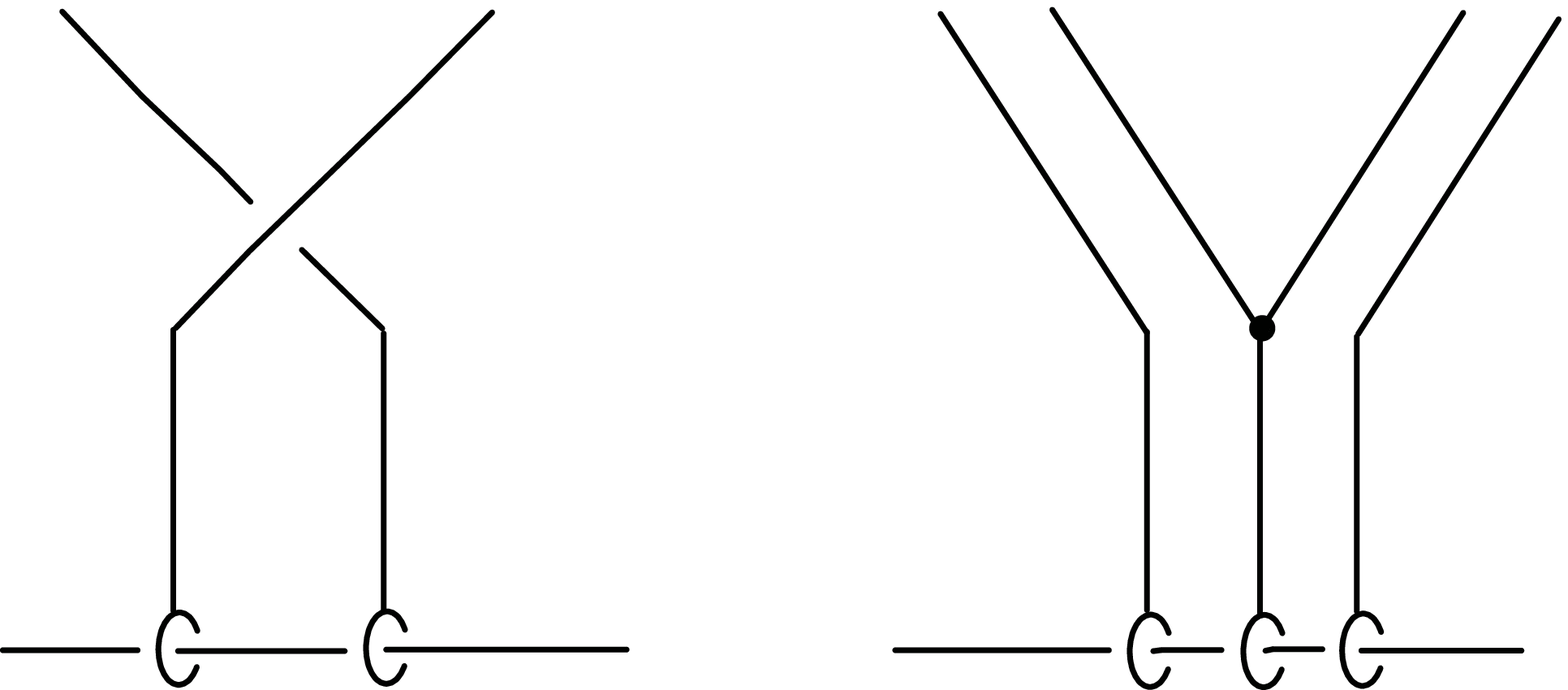}
\put(78,33){$=$}\put(0,60){$C_i$}\put(48,60){$C_j$}\put(120,67){$C_{i+j+1}$}
\end{overpic}}
$$
\vspace{0.3cm}
$$
(3)\hspace{0.4cm}
\raisebox{-28 pt}{\begin{overpic}[width=150pt]{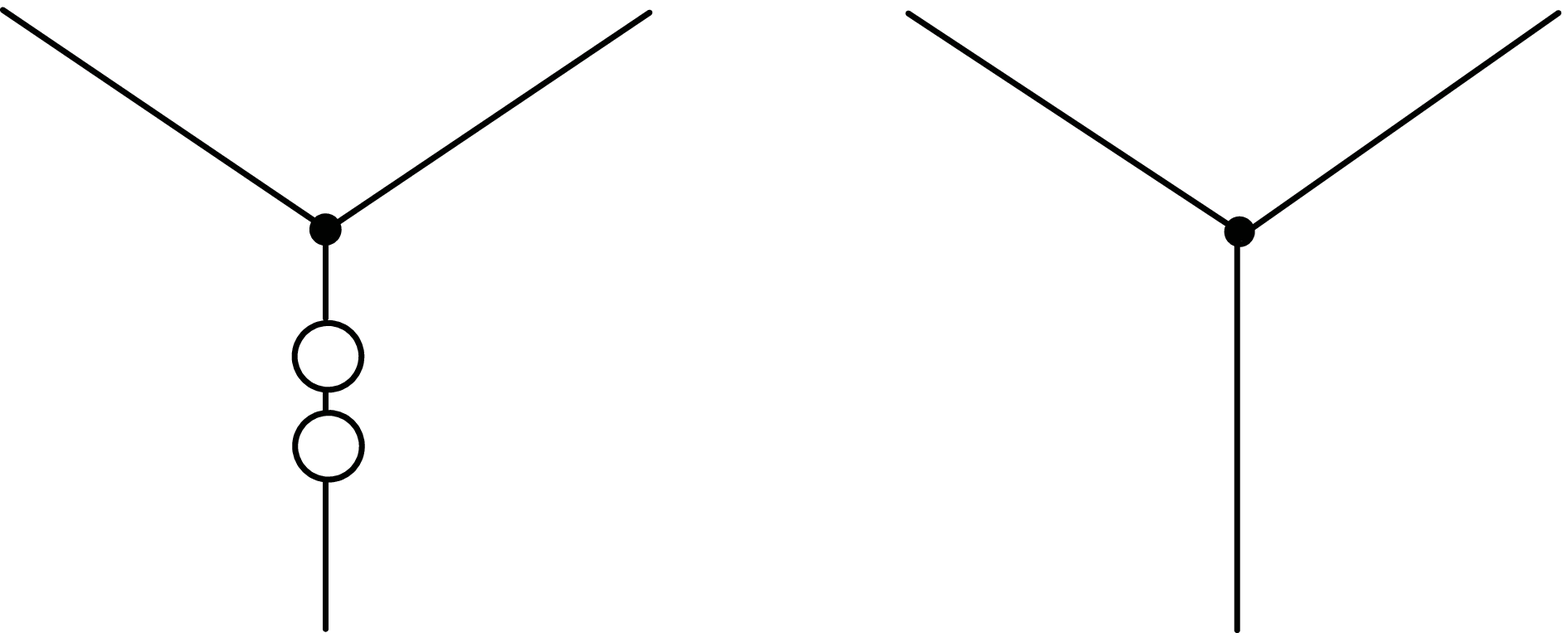}
\put(71,28){$=$}
\put(28,24.5){\tiny$+$}
\put(28,16){\tiny$+$}
\end{overpic}}
\hspace{1.0cm}
(4)\hspace{0.4cm}
\raisebox{-28 pt}{\begin{overpic}[width=150pt]{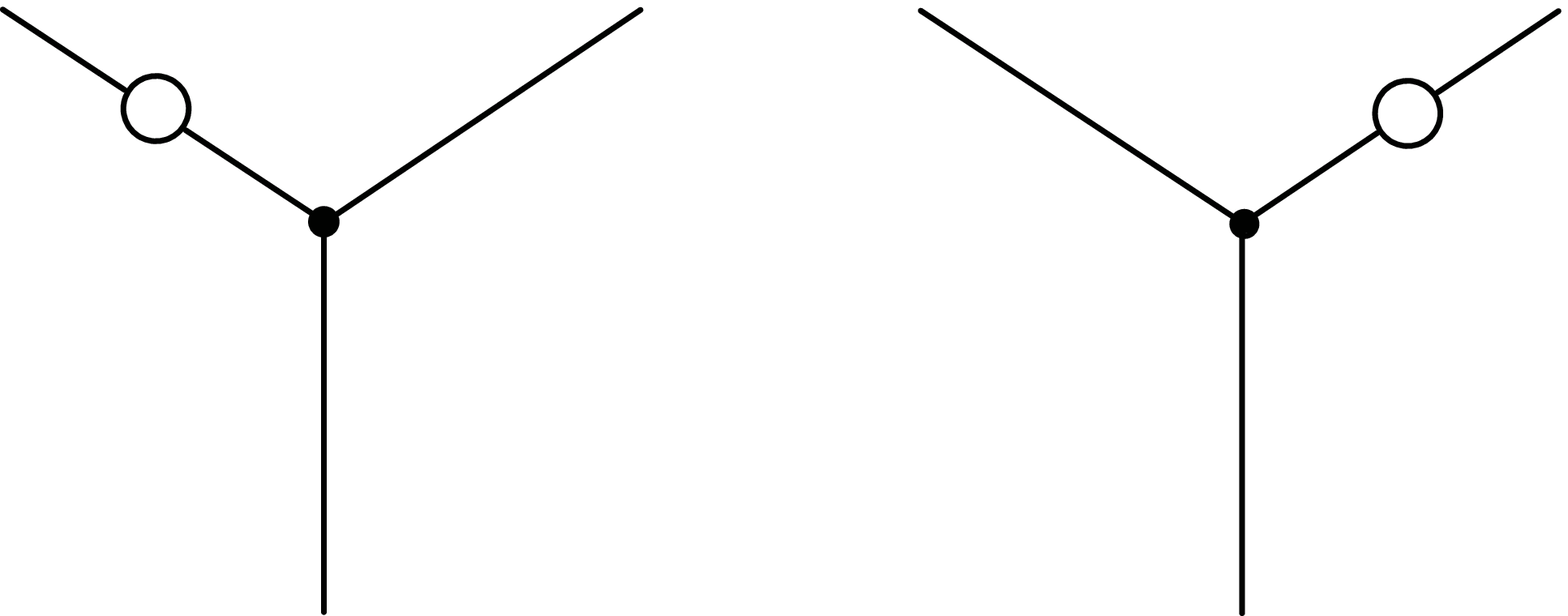}
\put(71,28){$=$}
\put(11.5,46.5){\tiny$+$} \put(131.5,46.5){\tiny$+$}
\end{overpic}}
$$
\vspace{0.3cm}
$$
(5)\hspace{0.4cm}
\raisebox{-32 pt}{\begin{overpic}[width=105pt]{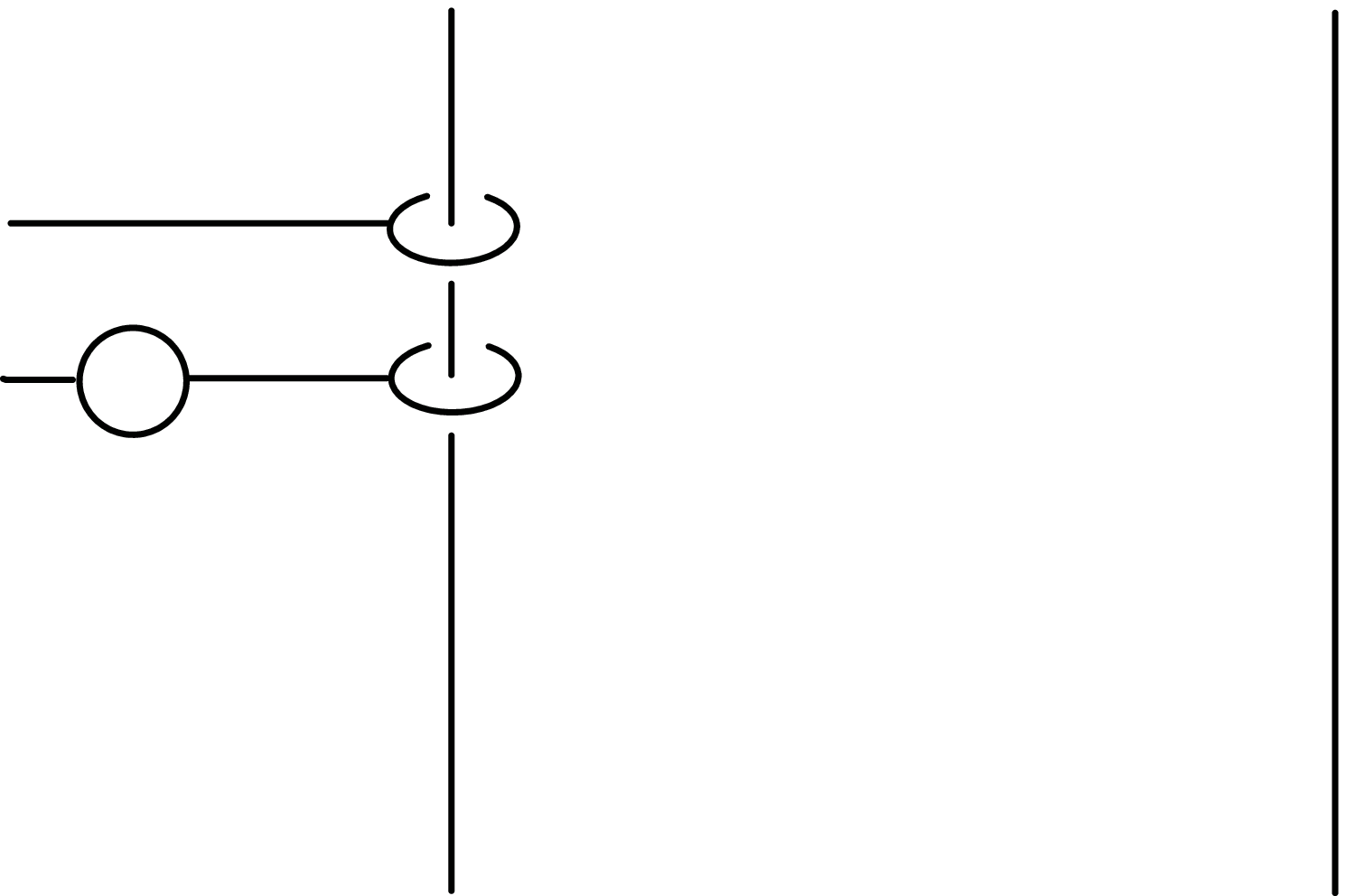}
\put(67,33){$=$} \put(6,37.7){\small$+$}
\end{overpic}} 
\hspace{1.5cm}
(6)\hspace{0.4cm}
\raisebox{-32 pt}{\begin{overpic}[width=170pt]{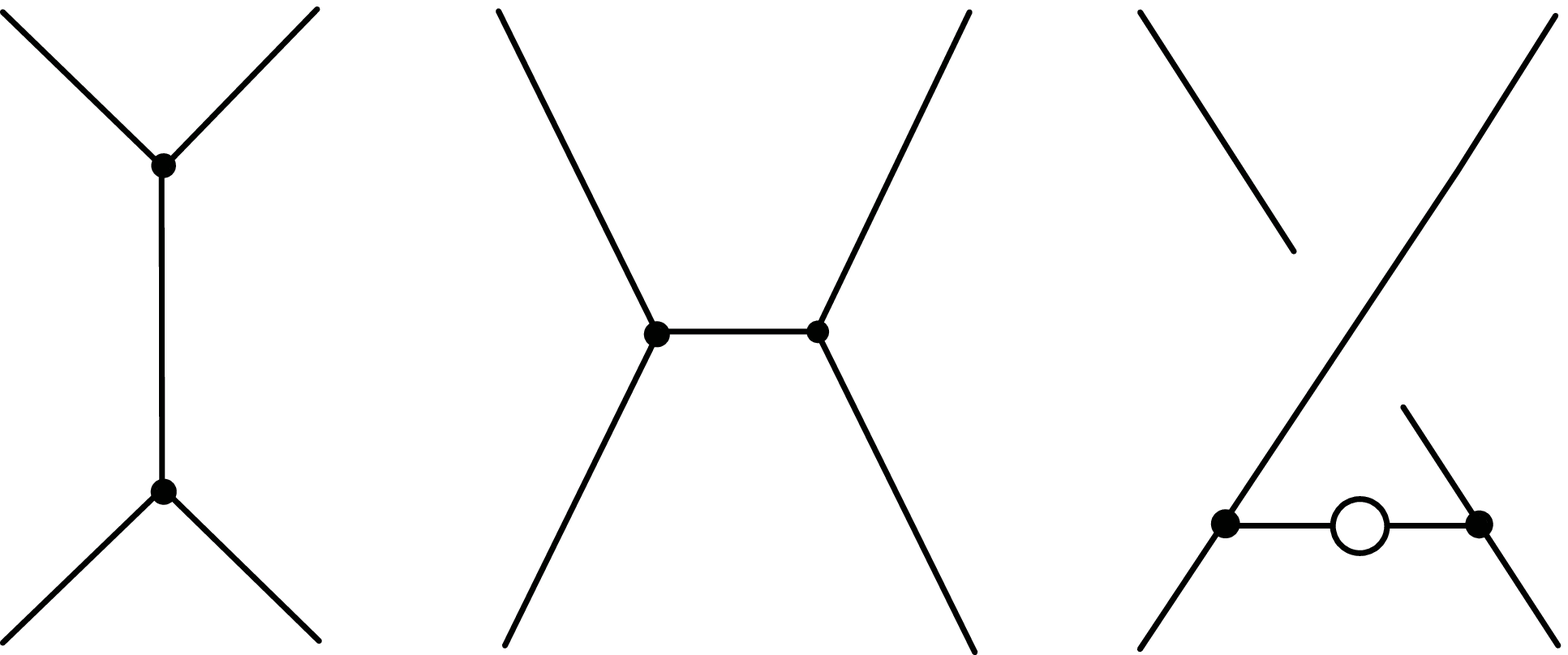}
\put(40,33){$=$}
\put(110,33){$+$} \put(144.5,12.2){\tiny$+$}
\end{overpic}}
$$    
\vspace{0.6cm}
$$
(7)\hspace{0.4cm}
\raisebox{-19 pt}{\begin{overpic}[width=75pt]{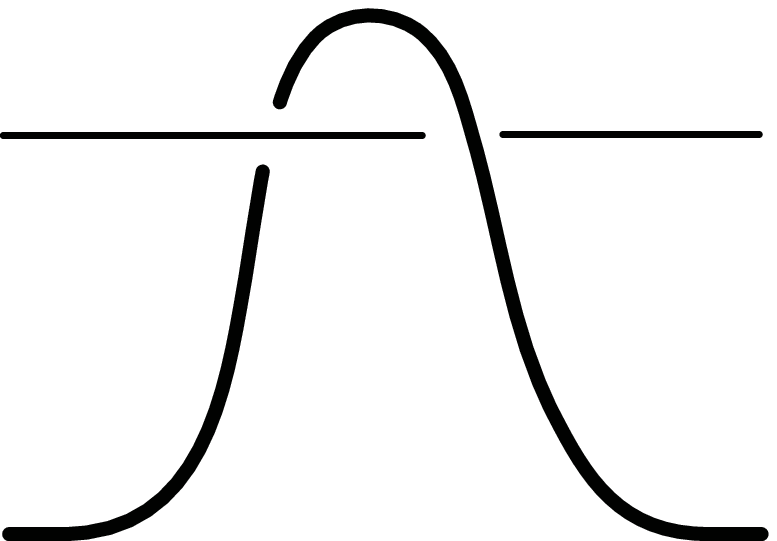}
\put(83,20){$=$}
\end{overpic}} 
\hspace{1.0cm}
\raisebox{-21 pt}{\begin{overpic}[width=75pt]{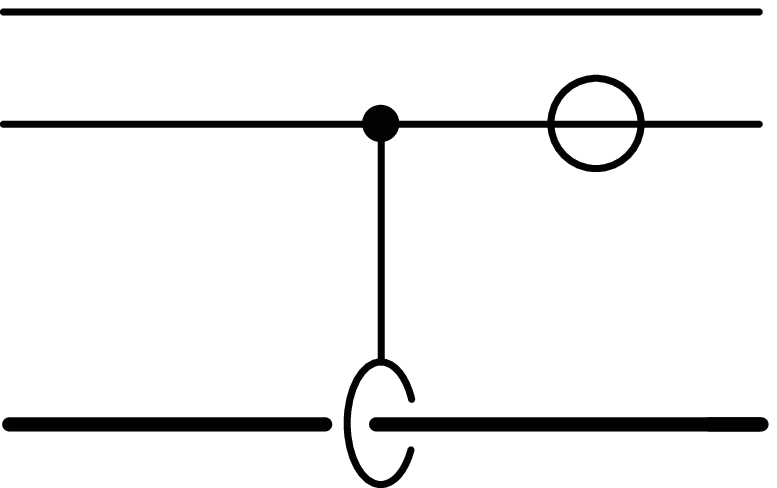}
\end{overpic}} 
$$    
\caption{Equations up to link-homotopy.}
\label{c1clasper}
\end{figure}
\end{lemma}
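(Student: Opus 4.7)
The plan is to verify each of (1)--(7) separately by invoking the corresponding identity from Habiro \cite{Ha}, Meilhan--Yasuhara \cite{MY}, and Meilhan \cite{M}, and then to descend from $C_k$-equivalence (or ambient isotopy) down to link-homotopy using Lemmas \ref{vanish} and \ref{edge-self-change}. The guiding principle throughout is that any auxiliary clasper appearing as an ``error term'' in the exact clasper calculus either carries two leaves on a single link component, in which case it vanishes by Lemma \ref{vanish}, or has only intra-tree self-crossings, which can be ignored by Lemma \ref{edge-self-change}. So each of these seven statements becomes an equality modulo link-homotopy rather than merely up to higher claspers.

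For (1) and (2) the mechanism is Habiro's ``zip'' construction: the difference between a crossed and uncrossed pair of surgery presentations, or between two leaf configurations of a pair of trees, is realized by surgery along a single new tree whose edges and leaves are obtained by zipping the two subtrees together. In (1) the crossing contributes one additional edge, yielding $i+j+2$; in (2) one leaf of each tree is merged at a trivalent vertex, yielding $i+j+1$. Part (7) is then a direct special case of (1): view a crossing change between a link component and an edge of the $C_k$-tree as a crossing change with a $C_0$-tree sitting on that component, and the asserted extra negative half-twist is exactly the sign that enters under the $C_0$-surgery convention in Figure \ref{crossing-clasper}.

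Parts (3)--(5) are Habiro's framing and cancellation moves. Part (3) is full-twist Kirby cancellation at the level of the surgery framed link. Part (4) is obtained by pushing the half-twist through the nearest trivalent vertex, which produces half-twists on the two other incident edges; one of them is then pushed further and the two remaining twists cancel by (3). Part (5) follows from (3) because the union of a twisted and an untwisted parallel copy of a $C_i$-tree surgers to a standard cancelling pair of handles once the twisting is absorbed.

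The main obstacle is part (6), the IHX relation, which does not hold on the nose at the clasper level. Here I would invoke Meilhan's proof in \cite{M} directly: the exact IHX identity holds modulo a sum of higher claspers, and each correction term carries two leaves on a common component of the link, hence vanishes under Lemma \ref{vanish}. The delicate bookkeeping lies in verifying that the ``X'' tree on the right-hand side picks up exactly one negative half-twist on the correct edge and that every correction term really does duplicate a leaf on some component; parts (3) and (4) are applied throughout to consolidate the accumulated half-twists to the single edge indicated in Figure \ref{c1clasper}(6).
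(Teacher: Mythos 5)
The paper itself gives no proof of this lemma: it is imported wholesale from Habiro \cite{Ha}, Meilhan--Yasuhara \cite{MY} and Meilhan \cite{M}, so your overall strategy --- quote the exact clasper identities from those sources and then kill every correction term by Lemma \ref{vanish} (a tree with two leaves on one component dies up to link-homotopy) and Lemma \ref{edge-self-change} --- is precisely the intended one, and your handling of (3), (4), (5) and (6) is consistent with how those references argue.

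The concrete weakness is the degree bookkeeping in (1), (2) and (7), which does not reproduce the degrees the lemma asserts. By the paper's convention a $C_k$-tree has $k+1$ leaves, so a tree obtained by joining a $C_i$-tree and a $C_j$-tree by ``one additional edge'' has $i+j+2$ leaves and is a $C_{i+j+1}$-tree, not the $C_{i+j+2}$-tree required in (1); merging one leaf of each tree at a new trivalent vertex whose third edge ends on the strand gives $i+j+1$ leaves, i.e.\ a $C_{i+j}$-tree, not the $C_{i+j+1}$-tree required in (2); and treating (7) as ``(1) with a $C_0$-tree'' would output $C_{k+2}$ instead of the correct $C_{k+1}$ (the right picture for (7) is simply inserting one new trivalent vertex on the crossed edge together with one new edge and one new leaf on the component, which raises the leaf count by exactly one). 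So the zip heuristic, as you have written it, produces trees of the wrong degree in all three cases; to make the argument close you need to take the correction trees exactly as depicted in Figure \ref{c1clasper} (equivalently, as in the corresponding lemmas of \cite{MY}) rather than from the informal join/merge description. For the purposes of this paper only the shape of the correction tree and the components its leaves meet matter, so this is repairable, but as stated the proposal does not verify the degrees appearing in the statement.
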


\begin{remark}
In Lemma \ref{clasperlemma}(1), if the $C_i$-tree and the $C_j$-tree in the left-hand side have leaves at the same component, the new $C_{i+j+2}$-tree in the right-hand side vanishes by Lemma \ref{vanish}.
In Lemma \ref{clasperlemma}(2), if the $C_i$-tree and the $C_j$-tree in the left-hand side have leaves at the same component other than the ones shown in Figure \ref{c1clasper}(2), the new $C_{i+j+1}$-tree in the right-hand side vanishes by Lemma \ref{vanish}.
By Lemma \ref{clasperlemma}(3), a positive half-twist and a negative one are equivalent up to link-homotopy. So we express those by just circles for simplicity.   
\end{remark}

\subsection{Canonical form for link-homotopy classes of string links} \label{canonicalform}
In \cite{HL} Habegger and Lin essentially defined the Milnor invariant of string links and showed that this invariant decides the link-homotopy class of a string link. 
After that, in \cite{Y} Yasuhara gave a canonical form for link-homotopy classes of string links by using claspers as follows.  In this subsection, we introduce a link-homotopy classification of string links by claspers. 



Let $\mathcal{I}_{k,n}$ be the set of sequences $i_1i_2 \ldots i_{k}$ of length $k$ on $\{ 1, \cdots , n\}$ such that $i_1 < \cdots < i_{k}$ and ${S}_k$ be the symmetric group of degree $k$. 
Let $\mathcal{J}_{k,n}$ be the set of all sequences $i_1 {s}(i_2 \cdots i_k) i_{k+1}$ where $i_1i_2 \ldots i_{k+1}$ in $\mathcal{I}_{k+1,n}$ and ${s}$ in ${S}_{k-1}$.
For any $J$ in $\mathcal{J}_{k,n}$, let $T_J$ and $T_J^{-1}$ be the $C_{k}$-trees for $1_n$ as illustrated in Figure \ref{C_k-trees}, where $b_{J}$ is the positive $(k-1)$-braid defined by a permutation ${s}$ such that every pair of strings crosses at most once, and each edges of the tree overpasses all strings of $1_n$. Then we have the following canonical forms for string links.

\begin{figure}[h]
$$\raisebox{-0 pt}{\begin{overpic}[width=190pt]{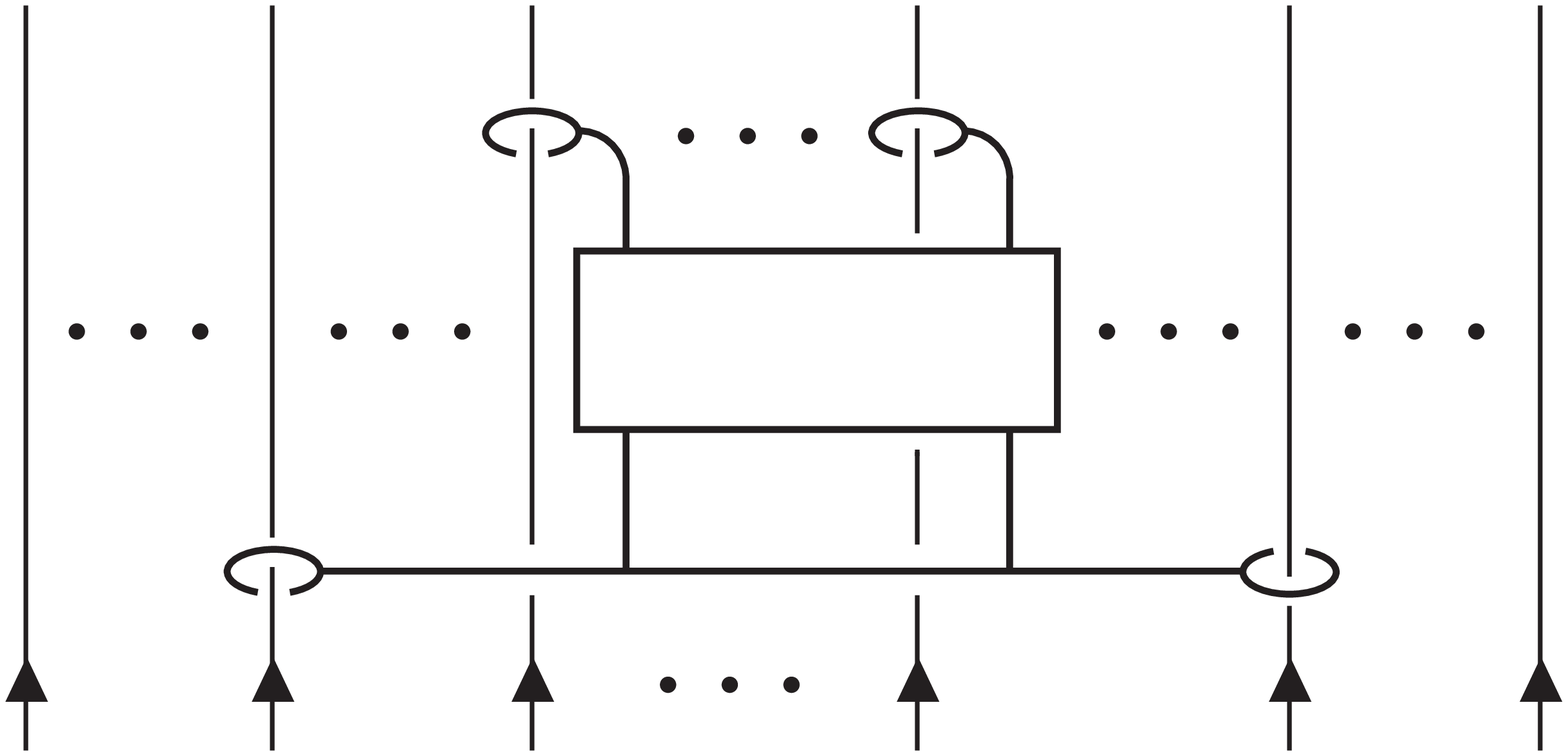}
\put(95,46){$b_J$}
\put(-1,-14){$1$} \put(29,-14){$i_1$} \put(61,-14){$i_2$} 
\put(108,-14){$i_{k}$} \put(148,-14){$i_{k+1}$} \put(184,-14){$n$} 
\end{overpic}}
\hspace{1.5cm}
\raisebox{-0 pt}{\begin{overpic}[width=190pt]{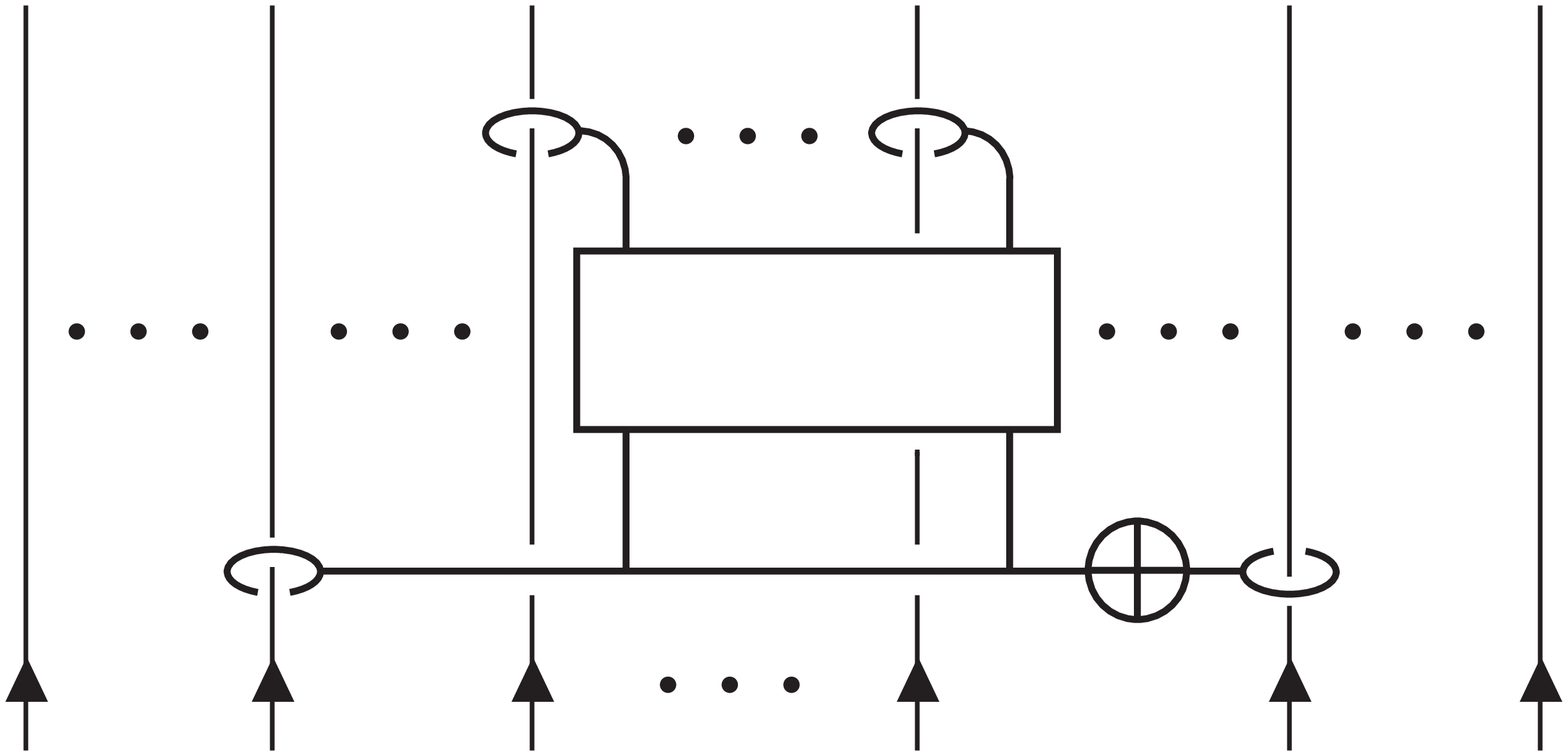}
\put(95,46){$b_J$}
\put(-1,-14){$1$} \put(29,-14){$i_1$} \put(61,-14){$i_2$} 
\put(108,-14){$i_{k}$} \put(148,-14){$i_{k+1}$} \put(184,-14){$n$} 
\end{overpic}}
$$
\vspace{0.0cm}
    \caption{$C_{k}$-trees $T_J$ and $T_J^{-1}$ for $1_n$.}
    \label{C_k-trees}
\end{figure}

\begin{theorem}[{\cite{MY2}}, cf.{\cite{Y}}] \label{standerd form}
Any $n$-string link $\sigma$ is link-homotopic to the string link $l_1 \cdot  l_2 \cdot \cdots \cdot l_{n-1}$, where 
\[
  l_k= \prod_{J \in \mathcal{J}_{k,n}} {(({1_n})_{T_J^{\varepsilon}})}^{|x_J|}, 
    x_J =\left\{ 
          \begin{array}{ll}
               \mu_\sigma(J) & \text{ if } k=1 \\
                 (-1)^{k-1} \{ \mu_\sigma(J)-\mu_{l_1 l_2\cdots l_{k-1}}(J) \}  & \text{ if } k \geq 2,  
          \end{array} 
            \right. 
\]            
where $\mu$ is Milnor's invariant for string links, $J$ in the product appears in the lexicographic order of the lenght 2 subsequence $i_1i_{k+1}$ of $J$ and if they are the same then in the lexicographic order of ${s}(i_2 \cdots i_k)$, and $\varepsilon=1$ if $x_J>0$ and otherwise $\varepsilon=-1$.   
\end{theorem}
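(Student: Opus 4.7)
The plan is to induct on $k$ from $1$ to $n-1$, constructing the partial products $l_1 l_2 \cdots l_k$ so that their Milnor invariants agree with those of $\sigma$ on every non-repeating sequence of length at most $k+1$, and then invoking the classification of link-homotopy classes of string links by Milnor invariants from \cite{HL}. Since Milnor invariants on non-repeating sequences vanish above length $n$, reaching $k=n-1$ completes the proof.

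The key computation will be to evaluate the length-$(k+1)$ Milnor invariants of the elementary surgery $(1_n)_{T_J}$ for $J\in\mathcal{J}_{k,n}$. First I would use the vanishing principle that a $C_k$-tree surgery preserves all Milnor invariants of length at most $k$; this ensures each $l_k$ is ``supported in degree $k+1$'' and does not disturb what was matched in earlier steps. Then I would compute the degree-$(k+1)$ effect of $T_J$ via Habiro's clasper calculus, together with Lemmas \ref{vanish} and \ref{clasperlemma}, showing that each tree realizes (modulo the shuffle and cyclic relations that link-homotopy Milnor invariants satisfy) the indicator $\mu(J)=\pm 1$ on the appropriate sequence. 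The index set $\mathcal{J}_{k,n}$, which fixes the extreme leaves $i_1,i_{k+1}$ and permutes only the middle ones, is exactly a bracket basis for the degree-$(k+1)$ part of the reduced free Lie algebra on $n$ generators, of cardinality $\binom{n}{k+1}(k-1)!$; this is what guarantees that the change-of-basis matrix between $\{T_J\}_{J\in\mathcal{J}_{k,n}}$ and the natural linear functionals $\mu(-)$ is invertible.

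With those ingredients in hand, the inductive step is essentially bookkeeping. Assume $\mu_{l_1\cdots l_{k-1}}(I)=\mu_\sigma(I)$ for all non-repeating $I$ with $|I|\leq k$; choose the multiplicities $x_J$ from the statement so that the length-$(k+1)$ contribution from $l_k$ cancels the discrepancy. The recursive formula for $x_J$, together with the sign $(-1)^{k-1}$, translates the Magnus expansion conventions for $\mu$ into signed surgery counts on $T_J$, and the case distinction $\varepsilon=\pm 1$ records that $T_J$ and $T_J^{-1}$ surgeries contribute with opposite signs. The base case $k=1$ is a direct calculation: a $C_1$-tree $T_{ij}$ is a Hopf chord between strands $i$ and $j$, contributing $1$ to the linking number $\mu(ij)$ and nothing else, so $l_1=\prod_{i<j}((1_n)_{T_{ij}^{\varepsilon}})^{|\mu_\sigma(ij)|}$ matches all length-$2$ invariants of $\sigma$.

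The main obstacle, in my view, is the basis step: identifying precisely which length-$(k+1)$ Milnor invariant each $T_J$ realizes and showing $\{T_J : J\in\mathcal{J}_{k,n}\}$ gives a basis. In clasper language this amounts to a normal-form theorem for linear $C_k$-trees modulo the IHX relation of Lemma \ref{clasperlemma}(6), antisymmetry at trivalent vertices produced by the half-twist move of Lemma \ref{clasperlemma}(4), and the leaf-exchange relations of Lemma \ref{clasperlemma}(2), all up to link-homotopy. The combinatorial content is that these relations reproduce exactly the defining relations of the degree-$(k+1)$ part of the reduced free Lie algebra, so the tree family indexed by $\mathcal{J}_{k,n}$ corresponds to a standard bracket basis there. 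Once this correspondence is made precise, the theorem follows by assembling the induction and appealing to the Habegger–Lin classification.
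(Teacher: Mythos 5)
First, note that the paper does not actually prove Theorem \ref{standerd form}: it is imported from \cite{MY2} (cf.\ \cite{Y}), so there is no internal proof to compare against. Your outline is essentially the argument of those sources --- induct on the degree $k$, use that $C_k$-tree surgery preserves Milnor invariants of length at most $k$, match the length-$(k+1)$ invariants at stage $k$, and conclude with the Habegger--Lin classification of string links up to link-homotopy by their $\mu$-invariants. Two points, however, need tightening before this is a proof.

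First, for the explicit formula $x_J=(-1)^{k-1}\{\mu_\sigma(J)-\mu_{l_1\cdots l_{k-1}}(J)\}$ you need more than your claim that ``the change-of-basis matrix between $\{T_J\}_{J\in\mathcal{J}_{k,n}}$ and the functionals $\mu(-)$ is invertible''; invertibility alone would only give the existence of \emph{some} integer multiplicities, obtained by applying the inverse matrix to the vector of discrepancies. What is actually required (and what \cite{MY2} establishes by computing the longitudes of $(1_n)_{T_J}$ through the Magnus expansion) is the exact duality $\mu_{(1_n)_{T_{J'}}}(J)=(-1)^{k-1}\delta_{J'J}$ for $J,J'\in\mathcal{J}_{k,n}$, i.e.\ the matrix is $(-1)^{k-1}$ times the identity; this is also where the sign in Remark \ref{MYrem} comes from. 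Your alternative route --- a normal form for linear $C_k$-trees modulo the IHX, antisymmetry and leaf-exchange relations of Lemma \ref{clasperlemma}, identified with a bracket basis of the multilinear degree-$(k+1)$ part of the reduced free Lie algebra --- could in principle deliver this, but proving that those clasper relations account for \emph{all} relations (so that $\binom{n}{k+1}(k-1)!$ really is the rank and not merely an upper bound) again requires evaluating Milnor invariants on the trees, so the direct computation is not avoidable. Second, your inductive bookkeeping silently uses that length-$(k+1)$ Milnor invariants are additive under composition of string links when one factor has vanishing invariants of length $\le k$ (additivity of the first non-vanishing $\mu$-invariants); this is a genuine lemma that must be invoked, since without it the contributions of the individual $T_J^{\pm}$ factors of $l_k$ and of $l_1\cdots l_{k-1}$ do not simply add.
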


\begin{remark}\label{MYrem}
In \cite{MY}, when $k\geq 2$, the sign $(-1)^{k-1}$ of $\mu_L(M)-\mu_{l_1 l_2\cdots l_{k-1}}(M)$ seems to be missing (see \cite{K}). 
\end{remark}

\begin{remark}\label{parallel}
A product of $p$ copies of a $C_k$-tree $T_J^\pm$ for ${\bf 1}_n$ can be expressed by $p$ parallel $C_k$-trees as illustrated in Figure~\ref{parallel trees} by Lemma~\ref{clasperlemma}(1), (2) and Lemma~\ref{vanish}.
We denote it by $T_J^{\pm p}$.
\end{remark}

\begin{figure}[ht]
$$
\raisebox{-25 pt}{\begin{overpic}[height=60pt]{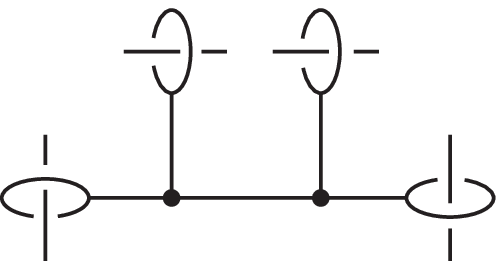}
\put(56,5){$p$}
\end{overpic}}
\,\,\,\,\longrightarrow\,\,\,\,
\raisebox{-25 pt}{\begin{overpic}[height=60pt]{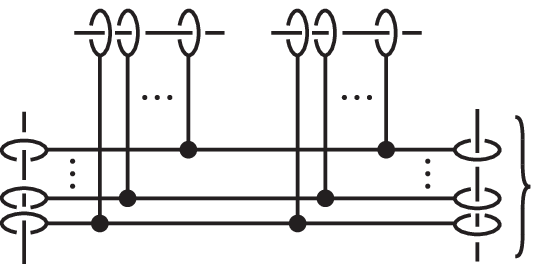}
\put(128,15){$p$}
\end{overpic}}
$$
\caption{$p$ parallel trees.} \label{parallel trees}
\end{figure}

\begin{figure}[ht]
$$
\raisebox{-40 pt}{\begin{overpic}[width=120 pt]{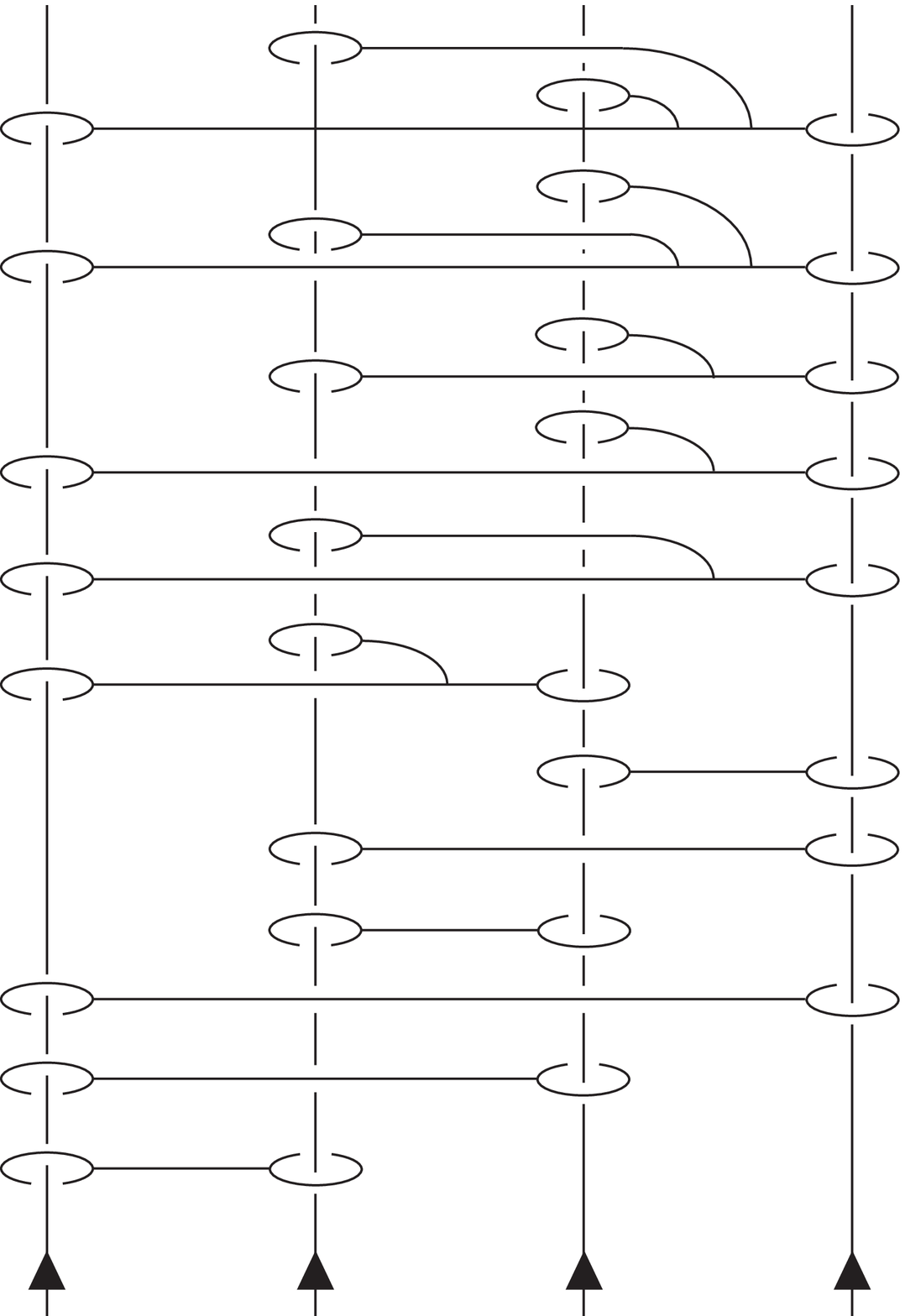}
\put(4,-13){$1$}\put(39,-13){$2$}
\put(75,-13){$3$}\put(111,-13){$4$}
\put(51,18){$y_{12}$}\put(87,29){$y_{13}$}
\put(123,40){$y_{14}$}\put(87,51){$y_{23}$}
\put(123,60){$y_{24}$}\put(123,73){$y_{34}$}
\put(87,83){$y_{123}$}\put(123,97){$y_{124}$}
\put(122,111){$y_{134}$}\put(122,124){$y_{124}$}
\put(122,139){$y_{1234}$}\put(122,157){$y_{1324}$}
\end{overpic}} 
$$
\vspace{0.0cm}
\caption{The canonical form for 4-component string links} \label{4-compSF}
\end{figure}

\begin{figure}[ht]
$$
\raisebox{-20 pt}{\begin{overpic}[width=90 pt]{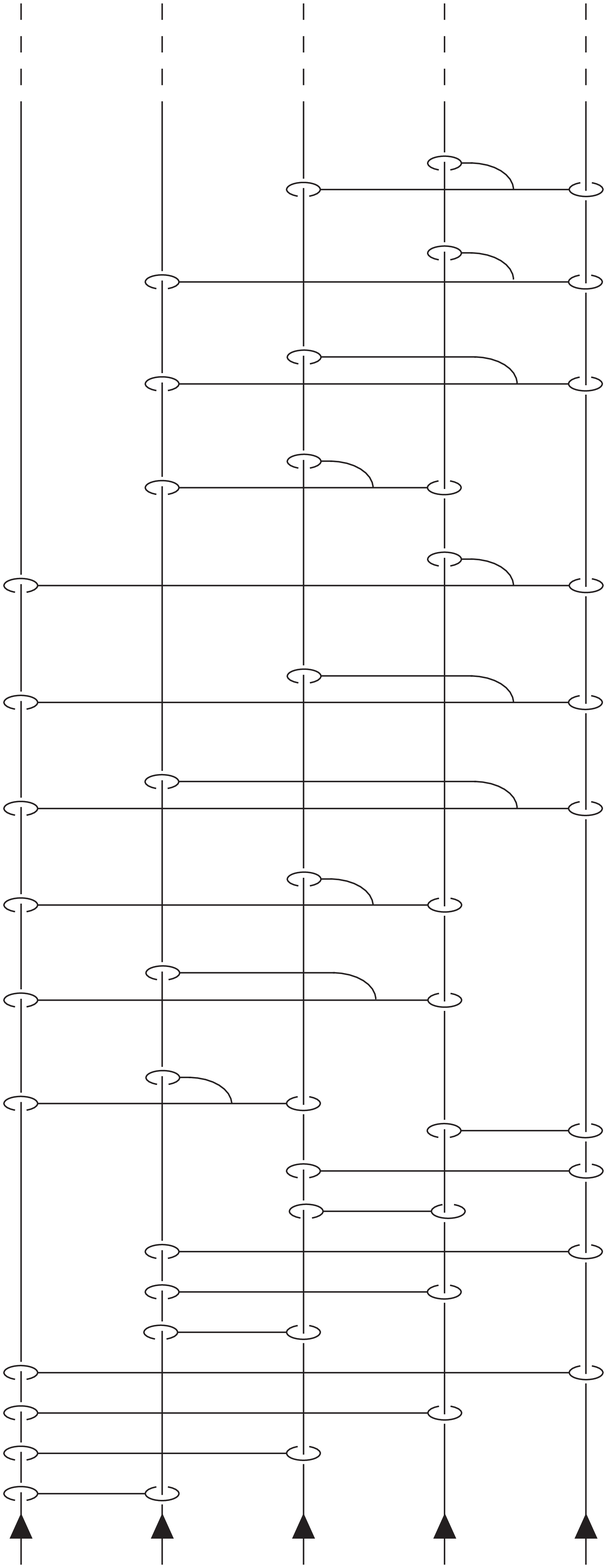}
\put(0,-10){\footnotesize $1$}\put(21,-10){\footnotesize $2$}
\put(42,-10){\footnotesize $3$}\put(63,-10){\footnotesize $4$}
\put(85,-10){\footnotesize $5$}
\put(28,10){\scriptsize $y_{12}$}\put(49,16){\scriptsize $y_{13}$}
\put(71,21){\scriptsize $y_{14}$}\put(92,27){\scriptsize $y_{15}$}
\put(49,34){\scriptsize $y_{23}$}\put(71,39){\scriptsize $y_{24}$}
\put(92,45){\scriptsize $y_{25}$}\put(71,52){\scriptsize $y_{34}$}
\put(92,57){\scriptsize $y_{35}$}\put(92,65){\scriptsize $y_{45}$}
\put(49,70){\footnotesize $y_{123}$}\put(70,84){\footnotesize $y_{124}$}
\put(70,98){\footnotesize $y_{134}$}\put(70,160){\footnotesize $y_{234}$}
\put(92,112){\footnotesize $y_{125}$}\put(92,128){\footnotesize $y_{135}$}
\put(92,146){\footnotesize $y_{145}$}\put(92,175){\footnotesize $y_{235}$}
\put(92,191){\footnotesize $y_{245}$}\put(92,205){\footnotesize $y_{345}$}
\end{overpic}}
\hspace{1.5cm}
\raisebox{-20 pt}{\begin{overpic}[width=90 pt]{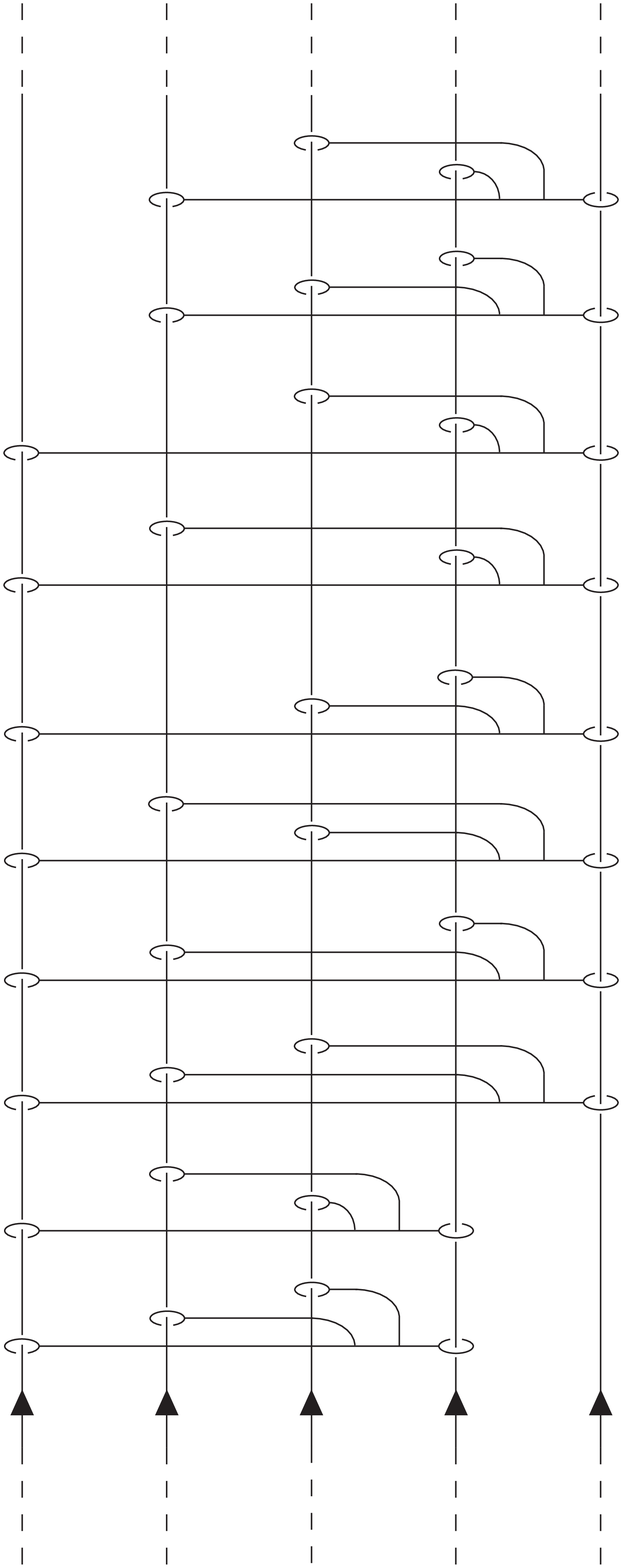}
\put(71,32){\footnotesize $y_{1234}$}\put(71,48){\footnotesize $y_{1324}$}
\put(92,67){\footnotesize $y_{1235}$}\put(92,85){\footnotesize $y_{1245}$}
\put(92,103){\footnotesize $y_{1325}$}\put(92,121){\footnotesize $y_{1345}$}
\put(92,143){\footnotesize $y_{1425}$}\put(92,162){\footnotesize $y_{1435}$}
\put(92,182){\footnotesize $y_{2345}$}\put(92,199){\footnotesize $y_{2435}$}
\end{overpic}}
\hspace{1.5cm}
\raisebox{-20 pt}{\begin{overpic}[width=90 pt]{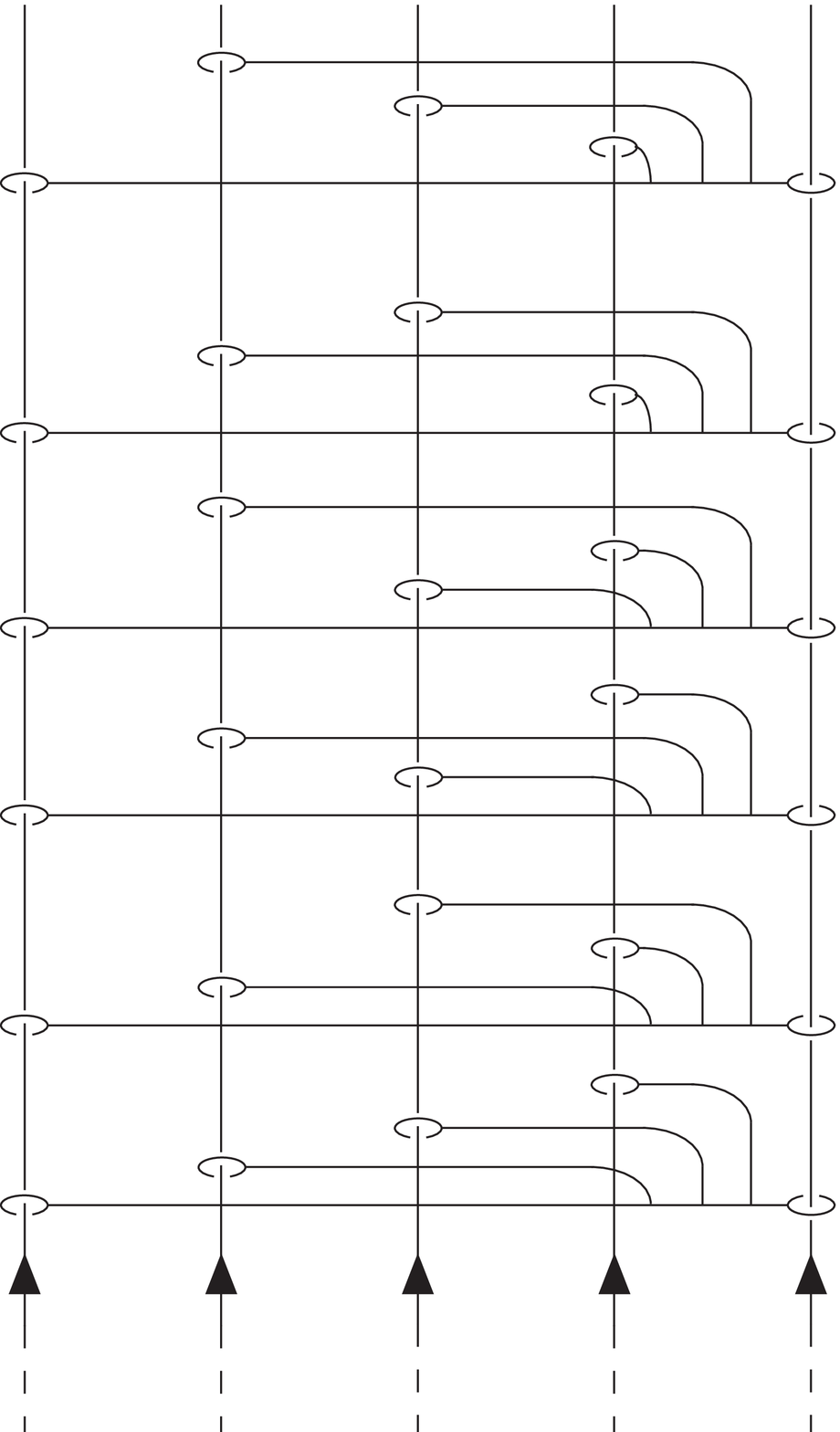}
\put(92,24){\footnotesize $y_{12345}$}\put(92,43){\footnotesize $y_{12435}$}
\put(92,66){\footnotesize $y_{13245}$}\put(92,86){\footnotesize $y_{13425}$}
\put(92,106){\footnotesize $y_{14235}$}\put(92,133){\footnotesize $y_{14325}$}
\end{overpic}}
$$
\vspace{0.0cm}
\caption{The canonical form of 5-component string links} \label{5-compSF}
\end{figure}

\section{calculation of partial conjugations} \label{CalcParConj}

In this section, we calculate the actions of the generators $(\overline{x}_i, \overline{x}_i)_j$ of partial conjugations for 4- and 5-component string links. We abbreviate $(\overline{x}_i, \overline{x}_i)_j$ to $\overline{x}_{ij}$. 
These results give presentations of the link-homotopy classes of 4- and 5-component links, see Theorems \ref{Rep4-compLink} and \ref{Rep5-compLink}. In the 4-component case, 
the result gives an alternative proof of Levine's classification of the link-homotopy classes of 4-component links \cite{Le2}, see Remark \ref{Rem-for-Levine's-result}.  
\par 
Theorem \ref{standerd form} gives canonical forms for string links. The canonical forms for the 4- and 5-component cases are as in Figure \ref{4-compSF} and \ref{5-compSF} respectively. 
For the canonical form, the action of the partial conjugation $\overline{x}_{ij}$ in Figure \ref{partialconj} is shown as in Figure \ref{ActSF} by using claspers, where $b$ presents the canonical forms.  

\begin{figure}[ht] 
\raisebox{-25 pt}{\begin{overpic}[height=120pt]{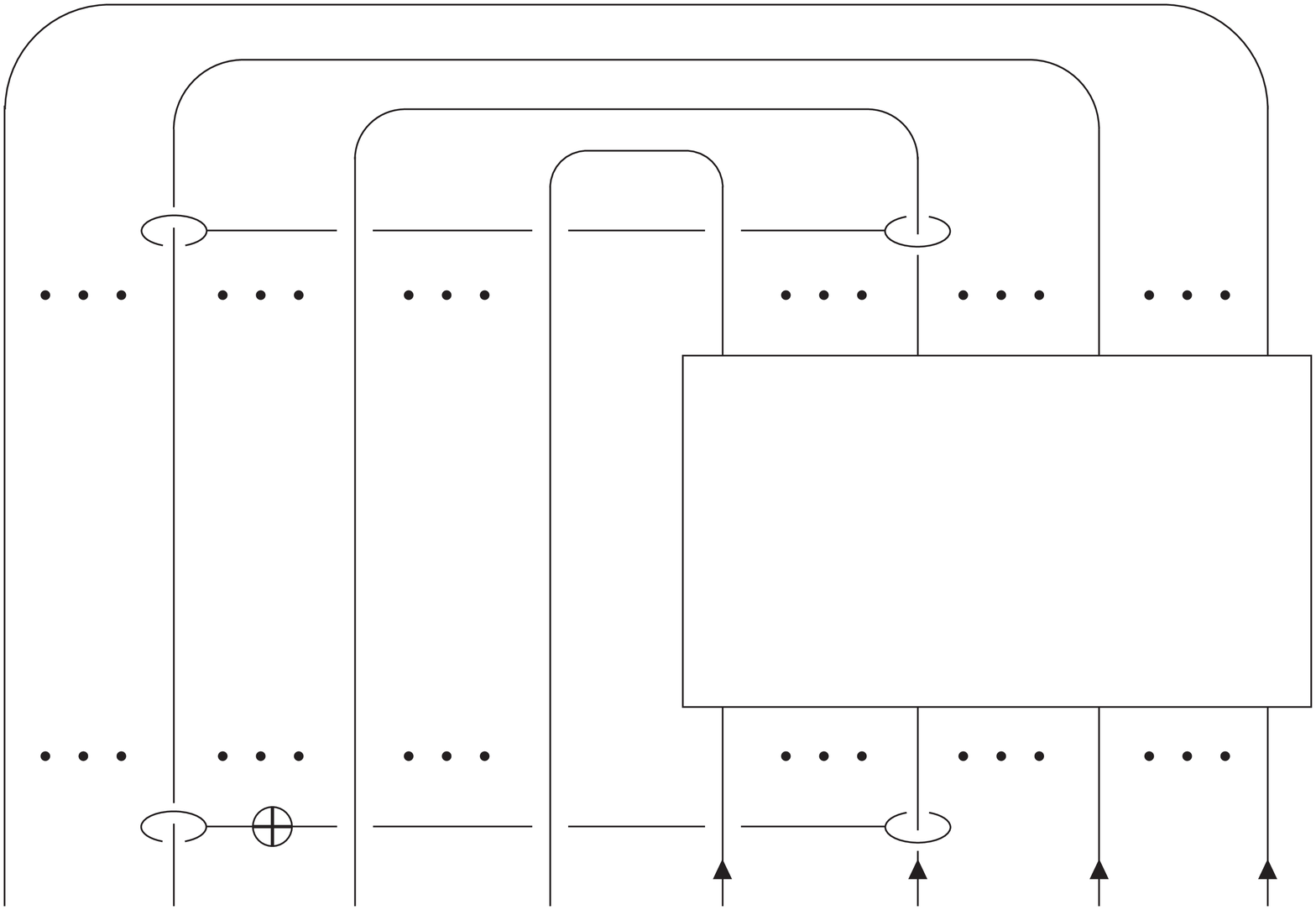}
\put(21,-12){$i$} \put(118,-12){$j$} \put(130,45){\Large $b$}
\end{overpic}}
\vspace{0.3cm}
\caption{Partial conjugation $\overline{x}_{ij}$ for the canonical form $b$} \label{ActSF}
\end{figure}

We transform the shape of Figure \ref{ActSF} back to the canonical form by using the relations in Section \ref{preparations}. Then the numbers of claspers change and the differences from the previous canonical form present the action of the partial conjugation $\overline{x}_{ij}$. For the generators $\{\overline{x}_{ij}\mid 1\leq i,j\leq n, i\neq j\}$ of the partial conjugations for $n$-component string links, we call $\overline{x}_{ij}$ a commutator of \textit{degree $1$} and $[\overline{x}_{ij}, C]=\overline{x}_{ij}C\overline{x}_{ij}^{-1} C^{-1}$ of \textit{degree $k$} if $C$ is a degree $k-1$ commutator. 
In \cite{Le2}, Levine classified the link-homotopy classes of 4-component links. In the process, he simplified the result by using the commutators of the actions corresponding to the partial conjugations. In \cite{HL}, Habegger and Lin pointed out that the actions of the $k$-th degree commutators commute up to those of $(k+1)$-th degree commutators and $(n-1)$-th degree commutators are identities for an $n$-component link. 
We calculate the actions of commutators $[\overline{x}_{ij}, \overline{x}_{kl}]=\overline{x}_{ij}\overline{x}_{kl}(\overline{x}_{ij})^{-1}(\overline{x}_{kl})^{-1}$ of the partial conjugations $\overline{x}_{ij}$ for the 4- and 5-component cases and additionally $[\overline{x}_{ij}, [\overline{x}_{kl}, \overline{x}_{st}]]$ for the 5-component case. By using the action of the commutators, we modify the actions of $\overline{x}_{ij}$ to simplified ones $\overline{x}'_{ij}$.
\par 
The results of this section are also on the second author's web page \cite{Mweb} as excel files and Mathematica files. 

\subsection{4-component case}
The results of the calculations of the actions of $\overline{x}_{ij}$ are in Table \ref{Act4-compSF}, where the entries are the difference of the number of the claspers. Note that $y_{ij}$ are invariants (which are equivalent to the linking numbers) and we omit them in the table. An example of a calculation for $\overline{x}_{ij}$ is shown in Appendix \ref{example-calc}. 

\if0
\begin{table}[htb] 
\begin{center}
\caption{Partial conjugations for 4-component string links} \label{Act4-compSF}
{\footnotesize
  \begin{tabular}{|c|c|c|c|c||c|c|} \hline
      & $y_{123}$ & $y_{124}$ & $y_{134}$ & $y_{234}$ & $y_{1234}$ & $y_{1324}$ \\ \hline 
    $\overline{x}_{12}$ & $y_{23}$ & $y_{24}$ & 0 & 0 
    & $\bm{y_{234}}-y_{24}y_{34}+y_{23}y_{34}$ & $-y_{23}y_{24}$ \\ \hline
    
    $\overline{x}_{13}$ & $-y_{23}$ & 0 & $y_{34}$ & 0 
    & $y_{34}-y_{23}y_{34}$ & $\bm{-y_{234}} +y_{23}y_{24}$ \\ \hline 
    
    $\overline{x}_{14}$ & 0 & $-y_{24}$ & $-y_{34}$ & 0 
    & $\bm{-y_{234}}-y_{34}+y_{24}y_{34}$ & $\bm{y_{234}}-y_{24}$ \\ \hline \hline
    
    $\overline{x}_{21}$ & $-y_{13}$ & $-y_{14}$ & 0 & 0 
    & $\bm{-y_{134}}+y_{14}y_{34}-y_{13}y_{34}+y_{13}y_{14}$ & $-y_{13}y_{14}$ \\ \hline 
    
    $\overline{x}_{23}$ & $y_{13}$ & 0 & 0 & $y_{34}$ 
    & $\bm{y_{134}}+y_{13}y_{34}-y_{13}y_{14}$ & $\bm{-y_{134}}+y_{13}y_{14}$ \\ \hline 
    
    $\overline{x}_{24}$ & 0 & $y_{14}$ & 0 & $-y_{34}$ 
    & $y_{14}-y_{14}y_{34}$ & $\bm{y_{134}}-y_{14}$ \\ \hline \hline
    
    $\overline{x}_{31}$ & $y_{12}$ & 0 & $-y_{14}$ & $0$ 
    & $-y_{14}-y_{12}y_{14}$ & $\bm{-y_{124}}+y_{14}+y_{12}y_{14}-y_{12}y_{24}+y_{14}y_{24}$ \\ \hline 
    
    $\overline{x}_{32}$ & $-y_{12}$ & 0 & 0 & $-y_{24}$
    & $\bm{-y_{124}}+y_{12}y_{14}$ & $\bm{y_{124}}-y_{12}y_{14}+y_{12}y_{24}$ \\ \hline
    
    $\overline{x}_{34}$ & 0 & 0 & $y_{14}$ & $y_{24}$ 
    & $\bm{y_{124}}$ & $-y_{14}y_{24}$ \\ \hline \hline
    
    $\overline{x}_{41}$ & 0 & $y_{12}$ & $y_{13}$ & 0 
    & $\bm{y_{123}}+y_{12}-y_{13}y_{23}+y_{12}y_{23}$ 
    & $\bm{-y_{123}}+y_{13}+y_{12}y_{13}-y_{12}y_{23}+y_{13}y_{23}$ \\ \hline
    
    $\overline{x}_{42}$ & 0 & $-y_{12}$ & 0 & $y_{23}$ 
    & $-y_{12}y_{23}-y_{12}$ & $\bm{y_{123}}-y_{12}y_{13}+y_{12}y_{23}$ \\ \hline
    
    $\overline{x}_{43}$ & 0 & 0 & $-y_{13}$ & $-y_{23}$ 
    & $\bm{-y_{123}}+y_{13}y_{23}$ & $-y_{13}y_{23}$ \\ \hline 
  \end{tabular}
}
  \end{center}
\end{table}
\fi

\begin{table}[htb] 
\begin{center}
\caption{Partial conjugations for 4-component string links} \label{Act4-compSF}
{
  \begin{tabular}{|c|l|l|l|l|l|l|} \hline
  & $\overline{x}_{12}$ & $\overline{x}_{13}$ & $\overline{x}_{14}$ & $\overline{x}_{21}$ \\ \hline
  
 $y_{123}$ & $y_{23}$ & $-y_{23}$  & 0 &  $-y_{13}$  \\ \hline 
 
 $y_{124}$ & $y_{24}$ & 0 & $-y_{24}$ & $-y_{14}$  \\ \hline 
 
 $y_{134}$ & $0$ & $y_{34}$ & $-y_{34}$ & 0  \\ \hline
 
 $y_{234}$ & $0$ & 0 & 0 & 0 \\ \hline
 
 $y_{1234}$ & $y_{234}-y_{24}y_{34}$ & $y_{34}-y_{23}y_{34}$ & $-y_{234}-y_{34}$ & $-y_{134}+y_{14}y_{34}$  \\ 
 
 & $+y_{23}y_{34}$ & & $+y_{24}y_{34}$ & $-y_{13}y_{34}+y_{13}y_{14}$   \\ \hline
 
 $y_{1324}$ & $-y_{23}y_{24}$ & $-y_{234} +y_{23}y_{24}$ & $y_{234}-y_{24}$ & $-y_{13}y_{14}$  \\ \hline \hline
  
   & $\overline{x}_{23}$ & $\overline{x}_{24}$ & $\overline{x}_{31}$ & $\overline{x}_{32}$\\ \hline
  
 $y_{123}$  & $y_{13}$ & 0 & $y_{12}$ & $-y_{12}$\\ \hline 
 
 $y_{124}$  & 0 & $y_{14}$ & 0 & 0 \\ \hline 
 
 $y_{134}$  & 0 & 0 & $-y_{14}$ & 0\\ \hline
 
 $y_{234}$  & $y_{34}$ & $-y_{34}$ & $0$ & $-y_{24}$\\ \hline
 
 $y_{1234}$  & $y_{134}+y_{13}y_{34}$ & $y_{14}-y_{14}y_{34}$ & $-y_{14}-y_{12}y_{14}$ & $-y_{124}+y_{12}y_{14}$\\ 
 
 & $-y_{13}y_{14}$ &  &  &   \\ \hline
 
 $y_{1324}$ & $-y_{134}+y_{13}y_{14}$ & $y_{134}-y_{14}$  & $-y_{124}+y_{14}$ & $y_{124}-y_{12}y_{14}$\\ \hline \hline
 
\if0
   & $\overline{x}_{31}$ & $\overline{x}_{32}$ & $\overline{x}_{34}$ & $\overline{x}_{41}$ & $\overline{x}_{42}$ & $\overline{x}_{43}$ \\ \hline 
  
   $y_{123}$ & $y_{12}$ & $-y_{12}$ & 0 & 0 & 0 & $0$ \\ \hline 
   
 $y_{124}$ & 0 & 0 & 0 & $y_{12}$ & $-y_{12}$ & $0$ \\ \hline 
 
 $y_{134}$ & $-y_{14}$ & 0 & $y_{14}$ & $y_{13}$ & 0 & $-y_{13}$  \\ \hline
 
 $y_{234}$ & $0$ & $-y_{24}$ & $y_{24}$ & 0 & $y_{23}$ & $-y_{23}$ \\ \hline
 
 $y_{1234}$ & $-y_{14}-y_{12}y_{14}$ & $-y_{124}+y_{12}y_{14}$ & $y_{124}$ & $y_{123}+y_{12}$ & $-y_{12}y_{23}-y_{12}$ & $-y_{123}+y_{13}y_{23}$ \\ 
 
  &  &  &  & $-y_{13}y_{23}+y_{12}y_{23}$ & &  \\ \hline

 $y_{1324}$ & $-y_{124}+y_{14}$ & $y_{124}-y_{12}y_{14}$ & $-y_{14}y_{24}$ & $-y_{123}+y_{13}$ & $y_{123}-y_{12}y_{13}$ & $-y_{13}y_{23}$ \\ 
 
  & $+y_{12}y_{14}-y_{12}y_{24}$ & $+y_{12}y_{24}$ & & $+y_{12}y_{13}-y_{12}y_{23}$ & $+y_{12}y_{23}$ &  \\ 
  
  & $+y_{14}y_{24}$ &  & & $+y_{13}y_{23}$ &  &  \\ \hline \hline
\fi

  & $\overline{x}_{34}$ & $\overline{x}_{41}$ & $\overline{x}_{42}$ & $\overline{x}_{43}$ \\ \hline 
  
   $y_{123}$ & 0 & 0 & 0 & $0$ \\ \hline 
   
 $y_{124}$ & 0 & $y_{12}$ & $-y_{12}$ & $0$ \\ \hline 
 
 $y_{134}$ & $y_{14}$ & $y_{13}$ & 0 & $-y_{13}$  \\ \hline
 
 $y_{234}$ & $y_{24}$ & 0 & $y_{23}$ & $-y_{23}$ \\ \hline
 
 $y_{1234}$ & $y_{124}$ & $y_{123}+y_{12}$ & $-y_{12}y_{23}-y_{12}$ & $-y_{123}+y_{13}y_{23}$ \\ 
 
  &  & $-y_{13}y_{23}+y_{12}y_{23}$ & &  \\ \hline

 $y_{1324}$  & $-y_{14}y_{24}$ & $-y_{123}+y_{13}$ & $y_{123}-y_{12}y_{13}$ & $-y_{13}y_{23}$ \\ 
 
  & & $+y_{12}y_{13}-y_{12}y_{23}$ & $+y_{12}y_{23}$ &  \\ 
  
  & & $+y_{13}y_{23}$ &  &  \\ \hline
  \end{tabular}
}
  \end{center}
\end{table}

\if0
\begin{center}
\begin{tabular}{|c|c|} \hline
  & $\overline{x}_{12}$ \\ \hline 
  $y_{123}$ & a \\ \hline 
  $y_{124}$ & a \\ \hline 
  $y_{134}$ & a \\ \hline 
  $y_{234}$ & a \\ \hline 
  $y_{1234}$ & a \\ \hline 
  $y_{1324}$ & a \\ \hline 
  \end{tabular} 
\end{center}
\fi

We also calculate the actions of the commutators $[\overline{x}_{ij}, \overline{x}_{kl}]$. The results are in Table \ref{ActCom4-compSF}. The other commutators which do not appear the table are equal to the identity or the ones in the table. 

\if0
\begin{table}[htb] 
\begin{center}
   \caption{Commutators of $\overline{x}_{ij}$ for 4-component string links} \label{ActCom4-compSF}
   \if0
  \begin{tabular}{|c|c|c|c|c||c|c|} \hline
      & $y_{123}$ & $y_{124}$ & $y_{134}$ & $y_{234}$ & $y_{1234}$ & $y_{1324}$ \\ \hline  
    $[\overline{x}_{ij}, \overline{x}_{kl}]$ & 0 & 0 & $c_5$ & $-c_1$ & $f_1$ & $0$ \\ \hline 
    $[\overline{x}_{ij}, \overline{x}_{kl}]$ & 0 & $c_6$ & $-c_5$ & $0$ & $0$ & $-f_1$ \\ \hline 
    $[\overline{x}_{ij}, \overline{x}_{kl}]$ & 0 & 0 & $-c_4$ & $c_2$ & $f_2$ & $0$ \\ \hline 
    $[\overline{x}_{ij}, \overline{x}_{kl}]$ & $c_6$ & 0 & $0$ & $-c_2$ & $0$ & $-f_2$ \\ \hline 
    $[\overline{x}_{ij}, \overline{x}_{kl}]$ & $c_5$ & $-c_4$ & $0$ & $0$ & $f_3$ & $0$ \\ \hline 
    $[\overline{x}_{ij}, \overline{x}_{kl}]$ & $-c_5$ & 0 & $0$ & $c_3$ & $0$ & $-f_3$ \\ \hline 
  \end{tabular} \\
  \fi
  \begin{tabular}{|c|c|c|c|c||c|c|} \hline
      & $y_{123}$ & $y_{124}$ & $y_{134}$ & $y_{234}$ & $y_{1234}$ & $y_{1324}$ \\ \hline 
    $[\overline{x}_{21}, \overline{x}_{31}]$ & 0 & 0 & 0 & 0 & $-y_{14}$ & $y_{14}$ \\
    \hline 
    $[\overline{x}_{21}, \overline{x}_{41}]$ & 0 & 0 & 0 & 0 & 0 & $y_{13}$ \\
    \hline
    $[\overline{x}_{31}, \overline{x}_{41}]$ & 0 & 0 & 0 & 0 & $y_{12}$ & 0 \\
    \hline 
    $[\overline{x}_{12}, \overline{x}_{32}]$ & 0 & 0 & 0 & 0 & 0 & $y_{24}$ \\
    \hline
    $[\overline{x}_{12}, \overline{x}_{42}]$ & 0 & 0 & 0 & 0 & $-y_{23}$ & $y_{23}$ \\
    \hline 
    $[\overline{x}_{13}, \overline{x}_{23}]$ & 0 & 0 & 0 & 0 & $y_{34}$ & 0 \\
    \hline
  \end{tabular} 
\end{center}
\end{table}
\fi

\begin{table}[htb] 
\begin{center}
   \caption{Commutators of $\overline{x}_{ij}$ for 4-component string links} \label{ActCom4-compSF}
  \begin{tabular}{|c|c|c|c|c|c|c|} \hline
      & $[\overline{x}_{21}, \overline{x}_{31}]$ & $[\overline{x}_{21}, \overline{x}_{41}]$ & $[\overline{x}_{31}, \overline{x}_{41}]$ & $[\overline{x}_{12}, \overline{x}_{32}]$ & $[\overline{x}_{12}, \overline{x}_{42}]$ & $[\overline{x}_{13}, \overline{x}_{23}]$ \\ \hline
    $y_{123}$ & 0 & 0 & 0 & 0 & 0 & 0 \\
    \hline 
    $y_{124}$ & 0 & 0 & 0 & 0 & 0 & 0 \\
    \hline
    $y_{134}$ & 0 & 0 & 0 & 0 & 0 & 0 \\
    \hline 
    $y_{234}$ & 0 & 0 & 0 & 0 & 0 & 0 \\
    \hline
    $y_{1234}$ & $-y_{14}$ & 0 & $y_{12}$ & 0 & $-y_{23}$ & $y_{34}$ \\
    \hline 
    $y_{1324}$ & $y_{14}$ & $y_{13}$ & 0 & $y_{24}$ & $y_{23}$ & 0 \\
    \hline
  \end{tabular} 
\end{center}
\end{table}

\begin{remark}
We have that, for the 4-component case, $[\overline{x}_{ik}, \overline{x}_{jk}]=[\overline{x}_{il}, \overline{x}_{jl}]$ where $i,j,k,l$ are distinct integers in $\{1,2,3,4\}$. 
\end{remark}

We can simplify the relation $\overline{x}_{ij}$ in Table \ref{Act4-compSF} by using the commutators in Table \ref{ActCom4-compSF} to the simplified relations $\overline{x}'_{ij}$. The actions of simplified partial conjugations $\overline{x}'_{ij}$ are in Table \ref{ModAct4-compSF}. For examples, the first two $\overline{x}'_{ij}$ are obtained as follows,
\begin{align*}
\overline{x}'_{12}=[\overline{x}_{13}, \overline{x}_{23}]^{y_{24}-y_{23}} \circ [\overline{x}_{12}, \overline{x}_{32}]^{y_{23}}\circ\overline{x}_{12}, \quad
\overline{x}'_{13}=[\overline{x}_{13}, \overline{x}_{23}]^{y_{23}} \circ [\overline{x}_{12}, \overline{x}_{32}]^{-y_{23}}\circ\overline{x}_{13}.
\end{align*}

\if0
\begin{table}[htb] 
\begin{center}
   \caption{Simplified partial conjugations $\overline{x}'_{ij}$ for 4-component string links} \label{ModAct4-compSF}
  \begin{tabular}{|c|c|c|c|c||c|c|} \hline
      & $y_{123}$ & $y_{124}$ & $y_{134}$ & $y_{234}$ & $y_{1234}$ & $y_{1324}$ \\ \hline 
    $\overline{x}'_{12}$ & $y_{23}$ & $y_{24}$ & 0 & 0 
    & $y_{234}$ & 0 \\ \hline
    $\overline{x}'_{13}$ & $-y_{23}$ & 0 & $y_{34}$ & 0 
    & 0 & $-y_{234}$ \\ \hline 
    $\overline{x}'_{14}$ & 0 & $-y_{24}$ & $-y_{34}$ & 0 
    & $-y_{234}$ & $y_{234}$ \\ \hline \hline
    $\overline{x}'_{21}$ & $-y_{13}$ & $-y_{14}$ & 0 & 0 
    & $-y_{134}$ & $0$ \\ \hline 
    $\overline{x}'_{23}$ & $y_{13}$ & 0 & 0 & $y_{34}$ 
    & $y_{134}$ & $-y_{134}$ \\ \hline 
    $\overline{x}'_{24}$ & 0 & $y_{14}$ & 0 & $-y_{34}$ 
    & 0 & $y_{134}$ \\ \hline \hline
    $\overline{x}'_{31}$ & $y_{12}$ & 0 & $-y_{14}$ & $0$ 
    & 0 & $-y_{124}$ \\ \hline 
    $\overline{x}'_{32}$ & $-y_{12}$ & 0 & 0 & $-y_{24}$
    & $-y_{124}$ & $y_{124}$ \\ \hline
    $\overline{x}'_{34}$ & 0 & 0 & $y_{14}$ & $y_{24}$ 
    & $y_{124}$ & 0 \\ \hline \hline
    $\overline{x}'_{41}$ & 0 & $y_{12}$ & $y_{13}$ & 0 
    & $y_{123}$ 
    & $-y_{123}$ \\ \hline
    $\overline{x}'_{42}$ & 0 & $-y_{12}$ & 0 & $y_{23}$ 
    & 0 & $y_{123}$ \\ \hline 
    $\overline{x}'_{43}$ & 0 & 0 & $-y_{13}$ & $-y_{23}$ 
    & $-y_{123}$ & 0 \\ \hline 
  \end{tabular} \\
\end{center}
\end{table}
\fi

\if0
\begin{table}[htb] 
\begin{center}
\caption{Simplified partial conjugations $\overline{x}'_{ij}$ for 4-component string links} \label{ModAct4-compSF}
{\footnotesize
  \begin{tabular}{|c|c|c|c||c|c|c||c|c|c||c|c|c|} \hline
  & $\overline{x}_{12}$ & $\overline{x}_{13}$ & $\overline{x}_{14}$ & $\overline{x}_{21}$ & $\overline{x}_{23}$ & $\overline{x}_{24}$ & $\overline{x}_{31}$ & $\overline{x}_{32}$ & $\overline{x}_{34}$ & $\overline{x}_{41}$ & $\overline{x}_{42}$ & $\overline{x}_{43}$ \\ \hline 
  
 $y_{123}$ & $y_{23}$ & $-y_{23}$  & 0 &  $-y_{13}$ & $y_{13}$ & 0 & $y_{12}$ & $-y_{12}$ & 0 & 0 & 0 & $0$ \\ \hline  
 
 $y_{124}$ & $y_{24}$ & 0 & $-y_{24}$ & $-y_{14}$ & 0 & $y_{14}$ & 0 & 0 & 0 & $y_{12}$ & $-y_{12}$ & $0$ \\ \hline
 
 $y_{134}$ & $0$ & $y_{34}$ & $-y_{34}$ & 0 & 0 & 0 & $-y_{14}$ & 0 & $y_{14}$ & $y_{13}$ & 0 & $-y_{13}$  \\ \hline
 
 $y_{234}$ & $0$ & 0 & 0 & 0 & $y_{34}$ & $-y_{34}$ & $0$ & $-y_{24}$ & $y_{24}$ & 0 & $y_{23}$ & $-y_{23}$ \\ \hline
 
 $y_{1234}$ & $y_{234}$ & $0$ & $-y_{234}$ & $-y_{134}$ & $y_{134}$ & $0$ & $0$ & $-y_{124}$ & $y_{124}$ & $y_{123}$ & $0$ & $-y_{123}$ \\ \hline
 
 $y_{1324}$ & $0$ & $-y_{234}$ & $y_{234}$ & $0$ & $-y_{134}$ & $y_{134}$ & $-y_{124}$ & $y_{124}$ & $0$ & $-y_{123}$ & $y_{123}$ & $0$ \\ \hline
  \end{tabular}
}
  \end{center}
\end{table}
\fi

\begin{table}[htb] 
\begin{center}
\caption{Simplified partial conjugations $\overline{x}'_{ij}$ for 4-component string links} \label{ModAct4-compSF}
{
  \begin{tabular}{|c|c|c|c||c|c|c|} \hline
  & $\overline{x}'_{12}$ & $\overline{x}'_{13}$ & $\overline{x}'_{14}$ & $\overline{x}'_{21}$ & $\overline{x}'_{23}$ & $\overline{x}'_{24}$ \\ \hline 
  
 $y_{123}$ & $y_{23}$ & $-y_{23}$  & 0 &  $-y_{13}$ & $y_{13}$ & 0 \\ \hline  
 
 $y_{124}$ & $y_{24}$ & 0 & $-y_{24}$ & $-y_{14}$ & 0 & $y_{14}$ \\ \hline
 
 $y_{134}$ & $0$ & $y_{34}$ & $-y_{34}$ & 0 & 0 & 0 \\ \hline
 
 $y_{234}$ & $0$ & 0 & 0 & 0 & $y_{34}$ & $-y_{34}$ \\ \hline
 
 $y_{1234}$ & $y_{234}$ & $0$ & $-y_{234}$ & $-y_{134}$ & $y_{134}$ & $0$ \\ \hline
 
 $y_{1324}$ & $0$ & $-y_{234}$ & $y_{234}$ & $0$ & $-y_{134}$ & $y_{134}$ \\ \hline\hline
  
  & $\overline{x}'_{31}$ & $\overline{x}'_{32}$ & $\overline{x}'_{34}$ & $\overline{x}'_{41}$ & $\overline{x}'_{42}$ & $\overline{x}'_{43}$ \\ \hline 
  
   $y_{123}$ & $y_{12}$ & $-y_{12}$ & 0 & 0 & 0 & $0$ \\ \hline 
   
 $y_{124}$ & 0 & 0 & 0 & $y_{12}$ & $-y_{12}$ & $0$ \\ \hline 
 
 $y_{134}$ & $-y_{14}$ & 0 & $y_{14}$ & $y_{13}$ & 0 & $-y_{13}$  \\ \hline
 
 $y_{234}$ & $0$ & $-y_{24}$ & $y_{24}$ & 0 & $y_{23}$ & $-y_{23}$ \\ \hline
 
 $y_{1234}$ & $0$ & $-y_{124}$ & $y_{124}$ & $y_{123}$ & $0$ & $-y_{123}$ \\ \hline

 $y_{1324}$ & $-y_{124}$ & $y_{124}$ & $0$ & $-y_{123}$ & $y_{123}$ & $0$ \\ \hline
  \end{tabular}
}
  \end{center}
\end{table}

We remark that, by replacing $\overline{x}_{ij}$ to $\overline{x}'_{ij}$, the actions of 
the commutators in Table \ref{ActCom4-compSF} do not change; $[\overline{x}_{ij},\overline{x}_{kl}]=[\overline{x}'_{ij},\overline{x}'_{kl}]$. Since $\displaystyle \prod_{\substack{j; j \neq i}} \overline{x}'_{ij}={\rm id}$ holds for each $1\leq i \leq 4$, we can erase 4 relations. We erase $\{\overline{x}'_{14}, \overline{x}'_{24}, \overline{x}'_{34}, \overline{x}'_{43}\}$. From the changes of $y_{ijk}$ in Table \ref{ModAct4-compSF}, we see that no more relations can be erased. For example, if $\overline{x}'_{12}$ is erased additionally, we can not change $y_{124}$ by $y_{24}$ by using the other $\overline{x}'_{ij}$. Thus the smallest number of the relations is 8.
Note that, for the commutators in Table \ref{ActCom4-compSF}, we do not use $\overline{x}_{ij}$ corresponding to the erased relations. 

\if0 
Finally  
we have the table of the relations in Table \ref{SmpfModAct4-compSF} which generate the changes of numbers of the claspers raised by the partial conjugations. 

\begin{table}[htb] 
\begin{center}
   \caption{Simplified partial conjugations $\overline{x}'_{ij}$ for 4-component string links} \label{SmpfModAct4-compSF}
  \begin{tabular}{|c|c|c|c|c||c|c|} \hline
      & $y_{123}$ & $y_{124}$ & $y_{134}$ & $y_{234}$ & $y_{1234}$ & $y_{1324}$ \\ \hline 
    $\overline{x}'_{12}$ & $y_{23}$ & $y_{24}$ & 0 & 0 & $y_{234}$ & $0$ \\ \hline
    $\overline{x}'_{13}$ & $-y_{23}$ & 0 & $y_{34}$ & 0 & 0 & $-y_{234}$ \\ \hline 
    $\overline{x}'_{21}$ & $-y_{13}$ & $-y_{14}$ & 0 & 0 & $-y_{134}$ & $0$ \\ \hline 
    $\overline{x}'_{23}$ & $y_{13}$ & 0 & 0 & $y_{34}$ & $y_{134}$ & $-y_{134}$ \\ \hline 
    $\overline{x}'_{31}$ & $y_{12}$ & 0 & $-y_{14}$ & $0$ & 0 & $-y_{124}$ \\ \hline 
    $\overline{x}'_{32}$ & $-y_{12}$ & 0 & 0 & $-y_{24}$ & $-y_{124}$ & $y_{124}$ \\ \hline
    $\overline{x}'_{41}$ & 0 & $y_{12}$ & $y_{13}$ & 0 & $y_{123}$ & $-y_{123}$ \\ \hline
    $\overline{x}'_{42}$ & 0 & $-y_{12}$ & 0 & $y_{23}$ & 0 & $y_{123}$ \\ \hline 
  \end{tabular} 
  \end{center}
\end{table}
\fi

As a conclusion, we have a presentation of the link-homotopy classes of 4-component links. 

\begin{theorem} \label{Rep4-compLink} 
The set $\mathcal{L}_4$ of link-homotopy classes of 4-component links has a presentation which is the set of 12-tuples of integers (i.e. the numbers of claspers) modulo the 
the relations $X_4$ generated by the
8 relations $\{\overline{x}'_{12},\overline{x}'_{13}, \overline{x}'_{21}, \overline{x}'_{23}, \overline{x}'_{31}\overline{x}'_{32}, \overline{x}'_{41}, \overline{x}'_{42} \}$ in Table \ref{ModAct4-compSF}. Namely, 
$$\mathcal{L}_4=\mathbb{Z}^{12}/{X_4}.$$
\end{theorem}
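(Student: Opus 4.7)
The plan is to combine Theorem~\ref{Markov-type} with Theorem~\ref{standerd form} and then explicitly compute the action. By the Markov-type theorem, together with Hughes' reduction, the set $\mathcal{L}_4$ is the orbit space of $\mathscr{H}(4)$ under the subgroup generated by the twelve partial conjugations $\overline{x}_{ij}$ for $1\le i\ne j\le 4$. By Yasuhara's canonical form (Theorem~\ref{standerd form}), every element of $\mathscr{H}(4)$ is uniquely represented by the $12$-tuple of clasper exponents pictured in Figure~\ref{4-compSF}, so as a set $\mathscr{H}(4)\cong\mathbb{Z}^{12}$. The theorem then reduces to identifying the equivalence relation $X_4$ on $\mathbb{Z}^{12}$ induced by the partial-conjugation action, and to exhibiting a small generating set.

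To compute the action of each $\overline{x}_{ij}$, I would start from the picture of $\overline{x}_{ij}$ acting on the canonical form $b$ in Figure~\ref{ActSF}, push the two auxiliary strands through $b$ using the clasper calculus of Lemma~\ref{clasperlemma} (crossing changes between trees produce higher-degree trees, leaves can be exchanged modulo new trees, and IHX holds), collapse any tree with two leaves on the same strand via Lemma~\ref{vanish}, and finally reorder the resulting claspers into the canonical configuration prescribed by Theorem~\ref{standerd form}. Doing this for all twelve generators yields Table~\ref{Act4-compSF}; the linking numbers $y_{ij}$ are link-homotopy invariants and so never change, while the Milnor triple and quadruple exponents change by expressions that are linear plus quadratic in the $y_{ij}$.

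Because the quadratic corrections in the $y_{1234}$ and $y_{1324}$ columns make the action non-linear (and therefore awkward to quotient by), I would linearize it by absorbing suitable commutators. The commutators $[\overline{x}_{ij},\overline{x}_{kl}]$ are computed in Table~\ref{ActCom4-compSF}; since $n-1=3$ and degree-three commutators act trivially by \cite{HL}, each such commutator is itself a $\mathbb{Z}$-linear translation of $\mathbb{Z}^{12}$ affecting only the final two coordinates. Multiplying each $\overline{x}_{ij}$ by an explicit integer combination of commutators (as in the sample formulas for $\overline{x}'_{12}$ and $\overline{x}'_{13}$ displayed in the text) kills the quadratic terms and produces the purely linear generators $\overline{x}'_{ij}$ of Table~\ref{ModAct4-compSF}.

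It remains to cut the twelve generators down to eight. Direct summation of the columns of Table~\ref{ModAct4-compSF} shows that $\prod_{j\ne i}\overline{x}'_{ij}=\mathrm{id}$ holds for each fixed $i\in\{1,2,3,4\}$, reflecting the fact that simultaneously conjugating the $i$-th strand by $\overline{x}_j$ on every other strand amounts to a global conjugation and hence is trivial modulo conjugations. This yields four relations among the twelve and lets me eliminate $\{\overline{x}'_{14},\overline{x}'_{24},\overline{x}'_{34},\overline{x}'_{43}\}$. The remaining eight generators are irredundant: a linear-algebra check on the degree-two block of Table~\ref{ModAct4-compSF} shows that each surviving $\overline{x}'_{ij}$ moves some $y_{klm}$ in a way that cannot be reproduced by integer combinations of the others. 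The principal obstacle throughout is not conceptual but bookkeeping: the clasper calculations producing Tables~\ref{Act4-compSF} and~\ref{ActCom4-compSF} are long and sign-sensitive, so a representative case is treated in Appendix~\ref{example-calc} and the remaining cases are organized in parallel.
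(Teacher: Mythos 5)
Your proposal follows the paper's argument essentially verbatim: reduce via the Habegger--Lin/Hughes Markov-type theorem and the clasper canonical form to an action on $\mathbb{Z}^{12}$, compute the twelve partial conjugations and their commutators by clasper calculus (Tables \ref{Act4-compSF} and \ref{ActCom4-compSF}), linearize to the $\overline{x}'_{ij}$ of Table \ref{ModAct4-compSF}, and cut to eight generators using $\prod_{j\neq i}\overline{x}'_{ij}=\mathrm{id}$ together with an irredundancy check on the degree-two block. This is the same route the paper takes, so no further comment is needed.
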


\if0
\begin{remark} \label{Rem-for-Levine's-result}
In \cite{Le}, Levine classified $\mathcal{L}_4$ via the calculation of the geometric automorphism of the reduced group of 4-component links. In \cite{KM2}, the authors had another presentation of $\mathcal{L}_4$ by introducing the ``tetrahedron'' standard form by using the claspers and proved that it is equivalent to Levine's result. 
This presentation coincides with the one in Theorem \ref{Rep4-compLink}. Thus the above calculations give an alternative proof of the Levine's result. 
\end{remark}

\begin{remark} \label{Graff's-result}
In \cite{Gra}, Graff also gave the same presentation of $\mathcal{L}_4$ by calculating the action of partial conjugations independently. 
\end{remark}
\fi

\begin{remark}
The functions of 12 integer variables which 
 are invariants under the relations of Table \ref{ModAct4-compSF} are invariants of $\mathcal{L}_4$. Some examples are given in \cite{Le2} and \cite{KM2} for subsets of $\mathcal{L}_4$.
\end{remark}

\subsection{5-component case}
We also calculate the actions of $\overline{x}_{ij}$ for the 5-component case. In this subsection, some tables of the calculations are moved to Appendix \ref{result-tables} since they occupy spaces. 
The results of the actions of $\overline{x}_{ij}$ are in the Table \ref{Act5-compSF}, where the entries are the difference of the number of the claspers. 
Note that $y_{ij}$ are invariants (which are equivalent to the linking numbers) and we omit them in the table. 
\par
We also calculate the actions of the commutators $[\overline{x}_{ij}, \overline{x}_{kl}]$ in Table \ref{ActCom5-compSF} and $[\overline{x}_{ij}, [\overline{x}_{kl}, \overline{x}_{st}]]$ in Table \ref{Act3Com5-compSF}. 
The results of $[\overline{x}_{ij}, \overline{x}_{kl}]$ can be simplified by using $[\overline{x}_{ij}, [\overline{x}_{kl}, \overline{x}_{st}]]$. The simplified first commutators $[\overline{x}_{ij}, \overline{x}_{kl}]'$ are shown in Table \ref{ModComm5-compSF}. Then we can simplify the actions of $\overline{x}_{ij}$ by using $[\overline{x}_{ij}, \overline{x}_{kl}]'$ and $[\overline{x}_{ij}, [\overline{x}_{kl}, \overline{x}_{st}]]$. The simplified partial conjugations $\overline{x}'_{ij}$ are shown in Table \ref{ModAct5-compSF}. For example, 
\begin{align*}
\overline{x}'_{12}=&
[\overline{x}_{34}, [\overline{x}_{14}, \overline{x}_{24}]]^{y_{25}y_{35}-y_{24}y_{34}-y_{23}y_{35}+y_{23}y_{34}}
\circ[\overline{x}_{31}, [\overline{x}_{14}, \overline{x}_{24}]]^{-y_{23}y_{25}+y_{23}y_{24}}\\
&\circ[\overline{x}_{52}, [\overline{x}_{13}, \overline{x}_{23}]]^{y_{24}y_{25}-y_{23}y_{25}}
\circ[\overline{x}_{43}, [\overline{x}_{13}, \overline{x}_{23}]]^{y_{24}+y_{24}y_{34}-y_{23}y_{34}+y_{23}y_{24}}\\
&\circ[\overline{x}_{41}, [\overline{x}_{13}, \overline{x}_{23}]]^{-y_{24}y_{25}+y_{23}y_{24}}
\circ[\overline{x}_{54}, [\overline{x}_{12}, \overline{x}_{42}]]^{y_{145}}\\
&\circ[\overline{x}_{53}, [\overline{x}_{12}, \overline{x}_{32}]]^{y_{135}}
\circ[\overline{x}_{42}, [\overline{x}_{12}, \overline{x}_{32}]]^{y_{23}y_{24}}
\circ\overline{x}_{12}.
\end{align*}
In the equation we only use the commutators $[\overline{x}_{ij}, [\overline{x}_{kl}, \overline{x}_{st}]]$ of degree 3. However, in general, to simplify $\overline{x}_{ij}$, we also use modified commutators $[\overline{x}_{ij}, \overline{x}_{kl}]'$ of degree 2. 

\if0
By using the degree 3 commutators in Table \ref{Act3Com5-compS}, we can simplify $[\overline{x}_{ij}, \overline{x}_{kl}]$ in Table \ref{Act2Com5-compSF} to $[\overline{x}_{ij}, \overline{x}_{kl}]'$ in Table \ref{ActMod2Com5-compSF}.
\fi

\begin{remark}
In Table \ref{ActCom5-compSF}, the other commutators $[\overline{x}_{ij}, \overline{x}_{kl}]$ which do not appear the table are equal to the identity or to the ones in the table up to the relations of the commutators $[\overline{x}_{ij}, [\overline{x}_{kl}, \overline{x}_{st}]]$ in Table \ref{Act3Com5-compSF}. 
In Table \ref{Act3Com5-compSF}, the other commutators $[\overline{x}_{ij}, [\overline{x}_{kl}, \overline{x}_{st}]]$ which do not appear the table are equal to the identity or the one in the table. 
\end{remark}

\if0
Then, we can simplify $\overline{x}_{ij}$ in Table \ref{Act5-compSF} to the relation $\overline{x}'_{ij}$ in Table \ref{SmpfModAct5-compSF} by using the relations in Table \ref{ActMod2Com5-compSF} and Table \ref{Act3Com5-compSF}.

\begin{table}[htb] 
\begin{center}
   \caption{The table of relations in Theorem \ref{mainthm}} \label{SmpfModAct5-compSF}
  \begin{tabular}{|c|c|c|c|c|c|c|} \hline
      & $f_1$ & $f_2$ & $f_3$ & $f_4$ & $t_1$ & $t_2$ \\ \hline 
    $\psi_{21}$ & 0 & 0 & $c_5$ & $-c_1$ & $f_1$ & $0$ \\ \hline 
    $\psi_{41}$ & 0 & $c_6$ & $-c_5$ & $0$ & $0$ & $-f_1$ \\ \hline 
    $\psi_{12}$ & 0 & 0 & $-c_4$ & $c_2$ & $f_2$ & $0$ \\ \hline 
    $\psi_{32}$ & $c_6$ & 0 & $0$ & $-c_2$ & $0$ & $-f_2$ \\ \hline 
    $\psi_{43}$ & $c_5$ & $-c_4$ & $0$ & $0$ & $f_3$ & $0$ \\ \hline 
    $\psi_{23}$ & $-c_5$ & 0 & $0$ & $c_3$ & $0$ & $-f_3$ \\ \hline 
    $\psi_{34}$ & $-c_1$ & $c_2$ & $0$ & $0$ & $f_4$ & $0$ \\ \hline 
    $\psi_{14}$ & 0 & $-c_2$ & $c_3$ & $0$ & $0$ & $-f_4$ \\ \hline 
  \end{tabular} \\
\end{center}
\end{table}
\fi

Remark that, when we replace $\overline{x}_{ij}$ to $\overline{x}'_{ij}$, the actions of the commutators in Table \ref{Act3Com5-compSF} do not change; $[\overline{x}_{ij}, [\overline{x}_{kl}, \overline{x}_{st}]]=[\overline{x}'_{ij}, [\overline{x}'_{kl}, \overline{x}'_{st}]]$, and the commutators $[\overline{x}'_{ij}, \overline{x}'_{kl}]$ can be simplified to the corresponding actions in Table \ref{ModComm5-compSF}. 
Since, $\displaystyle \prod_{\substack{j; j \neq i}} \overline{x}'_{ij}={\rm id}$ holds for each $1\leq i\leq 5$, we can erase 5 relations. We erase $\{\overline{x}'_{15}, \overline{x}'_{25}, \overline{x}'_{35}, \overline{x}'_{45}, \overline{x}'_{54}\}$. From the changes of $y_{ijk}$ in Table \ref{ModAct5-compSF}, we see that no more relations can be erased. For example, if $\overline{x}'_{12}$ is erased additionally, we can not change $y_{125}$ by $y_{25}$ by using the other $\overline{x}'_{ij}$. 
Thus the smallest number of the relations is 15. 
Note that, for the commutators in Tables \ref{ActCom5-compSF} and \ref{Act3Com5-compSF}, we do not use $\overline{x}_{ij}$ corresponding to the erased relations. 
Then we have a presentation of $\mathcal{L}_5$. 

\begin{theorem} \label{Rep5-compLink} 
The set $\mathcal{L}_5$ of link-homotopy classes of 5-component links has a presentation which is 36-tuples of integers (i.e. the numbers of claspers) modulo the relations $X_5$ generated by the 15 relations 
$\{\overline{x}'_{12}, \overline{x}'_{13}, \overline{x}'_{14}, 
\overline{x}'_{21}, \overline{x}'_{23}, \overline{x}'_{24}, 
\overline{x}'_{31}, \overline{x}'_{32}, \overline{x}'_{34}, 
\overline{x}'_{41}, \overline{x}'_{42}, \overline{x}'_{43},
\overline{x}'_{51}, \overline{x}'_{52}, \overline{x}'_{53}
\}$ 
in Table \ref{ModAct5-compSF}. Namely, 
$$\mathcal{L}_5=\mathbb{Z}^{36}/{X_5}.$$
\end{theorem}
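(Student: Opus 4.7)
The plan is to combine three inputs from the earlier sections and package them into the stated presentation. First, by Yasuhara's canonical form (Theorem \ref{standerd form}) the underlying set of link-homotopy classes of $5$-string links is parametrized by the $36$ clasper counts $(y_{12},\ldots,y_{14325})$ of Figure \ref{5-compSF}, giving a bijection $\mathscr{H}(5)\cong\mathbb{Z}^{36}$. Second, by the Habegger and Lin Markov-type theorem (Theorem \ref{Markov-type}) together with Hughes's simplification, two $5$-string links have link-homotopic closures if and only if they are related by a finite sequence of partial conjugations, which are realized as the action of the subgroup of $\mathscr{H}(10)$ generated by the $20$ generators $\overline{x}_{ij}$ ($1\le i\ne j\le 5$). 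Hence $\mathcal{L}_5$ is the quotient of $\mathbb{Z}^{36}$ by the subgroup generated by those $20$ actions, and the task reduces to (i) computing the $20$ actions explicitly and (ii) showing that the subgroup they generate coincides with the one generated by the $15$ relations $\overline{x}'_{ij}$ listed in Table \ref{ModAct5-compSF}.

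For step (i), I would stack the canonical form of Figure \ref{5-compSF} on top of $\overline{x}_{ij}$ as in Figure \ref{ActSF}, then iteratively apply the link-homotopy moves of Lemma \ref{clasperlemma}---especially the crossing-change rule (1), the leaf-exchange rule (2), the half-twist relations (3)--(4), and the IHX relation (6)---to slide the new Hopf-clasp pieces past the tower of $C_1,C_2,C_3,C_4$ trees until everything is back in canonical position. The resulting differences in the tuple $(y_{ij},y_{ijk},y_{ijkl},y_{ijklm})$ are recorded in Table \ref{Act5-compSF}. The same procedure applied to the commutators $[\overline{x}_{ij},\overline{x}_{kl}]$ and $[\overline{x}_{ij},[\overline{x}_{kl},\overline{x}_{st}]]$ yields Tables \ref{ActCom5-compSF} and \ref{Act3Com5-compSF}. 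Since $n-1=4$, the Habegger and Lin observation that degree-$(n-1)$ commutators act trivially means no further nested commutators are required. A prototypical calculation is carried out in Appendix \ref{example-calc}, and the remaining entries are obtained by the same bookkeeping, cross-checked symbolically in the Mathematica files at \cite{Mweb}.

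For step (ii), I would first use the degree-$3$ relations of Table \ref{Act3Com5-compSF} to tidy the degree-$2$ commutator data into Table \ref{ModComm5-compSF}, and then use both tables together to produce the simplified generators $\overline{x}'_{ij}$ of Table \ref{ModAct5-compSF}; the composition displayed for $\overline{x}'_{12}$ in the text is the template for each entry. Direct inspection of Table \ref{ModAct5-compSF} reveals the geometric identity $\prod_{j\ne i}\overline{x}'_{ij}=\mathrm{id}$ for each $i=1,\ldots,5$, which lets me erase the five generators $\overline{x}'_{15},\overline{x}'_{25},\overline{x}'_{35},\overline{x}'_{45},\overline{x}'_{54}$ without changing the subgroup $X_5$. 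Minimality of the remaining $15$ generators is verified coordinate by coordinate: for each candidate $\overline{x}'_{ij}$ one exhibits a shift (for instance the change of $y_{125}$ by $y_{25}$ produced by $\overline{x}'_{12}$) that none of the other generators can reproduce, so no further reduction is possible. Assembling these pieces yields $\mathcal{L}_5=\mathbb{Z}^{36}/X_5$.

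The main obstacle is combinatorial bookkeeping rather than conceptual difficulty: the degree-$3$ and degree-$4$ clasper calculations for each $\overline{x}_{ij}$ involve many IHX, half-twist, and leaf-slide moves per matrix entry, and the correct commutator corrections that turn $\overline{x}_{ij}$ into $\overline{x}'_{ij}$ must be identified before the expressions over $\mathbb{Z}[y_{ij}]$ collapse to the clean form in Table \ref{ModAct5-compSF}. Once the $4$-component template of Theorem \ref{Rep4-compLink} is in hand one knows what to aim for, but keeping the signs, the orderings $s\in S_{k-1}$ of tree leaves, and the $\pm$ half-twists consistent across $36$ coordinates and across all the commutator entries is where the real work lives.
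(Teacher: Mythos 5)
Your proposal follows essentially the same route as the paper: parametrize $\mathscr{H}(5)$ by the $36$ clasper counts of the canonical form, reduce the classification of closures to the partial-conjugation action via Theorem \ref{Markov-type}, compute the actions of $\overline{x}_{ij}$ and their degree-$2$ and degree-$3$ commutators by the clasper moves of Lemma \ref{clasperlemma}, simplify to the $\overline{x}'_{ij}$ of Table \ref{ModAct5-compSF}, and use $\prod_{j\neq i}\overline{x}'_{ij}=\mathrm{id}$ to discard $\{\overline{x}'_{15},\overline{x}'_{25},\overline{x}'_{35},\overline{x}'_{45},\overline{x}'_{54}\}$, with the same coordinate-by-coordinate check (e.g.\ the shift of $y_{125}$ by $y_{25}$) that no further generator can be erased. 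This matches the paper's argument in Section \ref{CalcParConj}, including the reliance on the appendix example and the Mathematica verification for the bulk of the bookkeeping.
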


\begin{remark}
The functions of 36 integer variables which 
 are invariants under the relations of Table \ref{ModAct5-compSF} are invariants of $\mathcal{L}_5$. 
\end{remark}


\subsection{Conjecture}
In the both 4- and 5-component cases, $\displaystyle \prod_{j; j \neq i} \overline{x}_{ij}={\rm id}$ holds up to commutators for each $1\leq i \leq n$. We can check the similar relations for the 2- and 3-component cases. We state a conjecture. 

\begin{conjecture}
For $n$-component string links, $\displaystyle \prod_{j; j \neq i} \overline{x}_{ij}={\rm id}$ holds up to commutators for each $1\leq i \leq n$. Thus the number of the generators of partial conjugations is reduced to $n(n-2)$. 
\end{conjecture}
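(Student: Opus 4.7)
The plan is to combine a geometric interpretation with an algebraic commutator expansion. First I would apply Habegger and Lin's action formula to identify $\prod_{j \neq i} \overline{x}_{ij}$ (with some chosen multiplication order) on a decomposed string link $\theta_i g_i$ as the map $\theta_i g_i \mapsto \theta_i g_i^w$, where $w = \prod_{j \neq i} x_j \in RF_i(n-1)$. The conjecture then reduces to showing that conjugation of $g_i$ by $w$ is the identity modulo terms realizable by commutators of the partial conjugations $\overline{x}_{kl}$.

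The geometric motivation is the classical identity in $\pi_1(D^2 \setminus \{p_1, \ldots, p_n\})$: the product of all meridians $x_1 x_2 \cdots x_n$ is freely homotopic to the boundary loop of the disk. Restricting to the complement of the $i$-th puncture, the word $w$ is freely homotopic to a loop around $p_i$, namely $x_i^{\pm 1}$. Since in $RF(n)$ the element $x_i$ commutes with all its conjugates, conjugation by $x_i$ acts trivially on $RF_i(n-1)$, so the failure of $w$ to act trivially is controlled by the commutator word expressing the difference between $w$ and $x_i^{\mp 1}$.

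The second main step is to translate commutators in $RF_i(n-1)$ into commutators of partial conjugations. The basic dictionary is that $[\overline{x}_{ij}, \overline{x}_{ik}]$ acts on $\theta_i g_i$ as conjugation of $g_i$ by $[x_k^{-1}, x_j^{-1}]$, with analogous correspondences at higher commutator depth. Using the lower-central-series filtration of $\mathscr{H}(n)$, which is nilpotent of class at most $n-1$ under link-homotopy by Habegger and Lin, one inductively cancels the commutator discrepancy by multiplying by appropriate iterated commutators of the $\overline{x}_{ij}$, the procedure terminating after finitely many steps.

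The principal obstacle is making this dictionary precise at higher depth. The degree-two case is direct and already visible in Tables \ref{ActCom4-compSF} and \ref{Act3Com5-compSF}, but matching iterated commutators in $RF_i(n-1)$ (which satisfy Jacobi and Hall-Witt relations) with iterated commutators of the $\overline{x}_{ij}$ demands careful bookkeeping of signs and orderings. Equally delicate is ensuring that the accumulated corrections remain in the subgroup generated by commutators of partial conjugations rather than spawning independent generators. A clean uniform induction on commutator depth, powered by the bounded nilpotency of $\mathscr{H}(n)$, is what I expect to be the technical heart of the proof.
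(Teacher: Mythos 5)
First, a point of comparison: the paper does not prove this statement at all --- it is stated as a conjecture, verified only for $n\le 5$ by the explicit computations in Tables \ref{ModAct4-compSF} and \ref{ModAct5-compSF}. So your proposal is not being measured against an existing argument; it has to stand on its own, and it does not, because of a concrete misidentification at the very first step.

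With the paper's convention $\overline{x}_{ij}=(\overline{x}_i,\overline{x}_i)_j$ and $(\overline{x}_j,\overline{x}_j)_i\cdot\theta_i g_i=\theta_i g_i^{x_j}$, the generator $\overline{x}_{ij}$ acts through the decomposition $\theta_j g_j$ by conjugating $g_j$ by the single meridian $x_i$. The conjecture's product $\prod_{j\neq i}\overline{x}_{ij}$ therefore fixes the \emph{conjugating} meridian $x_i$ and varies the \emph{target} strand $j$; it does not act on one decomposition $\theta_i g_i$ at all. Your identification of this product with $\theta_i g_i\mapsto\theta_i g_i^{w}$, $w=\prod_{j\neq i}x_j$, describes the transposed product $\prod_{j\neq i}\overline{x}_{ji}$, which is a different operator and, crucially, satisfies a different (and weaker) relation: by Proposition \ref{ConjRel}(2) it equals a product of conjugations, and it is \emph{not} the identity up to commutators. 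For $n=4$ one reads off from Table \ref{ModAct4-compSF} that $\overline{x}'_{21}\overline{x}'_{31}\overline{x}'_{41}$ changes $y_{123}$ by $y_{12}-y_{13}$, a change in degree-$2$ clasper data that no commutator $[\overline{x}_{kl},\overline{x}_{st}]$ can produce (all commutators act trivially on the $y_{ijk}$ by Table \ref{ActCom4-compSF}). So the statement your argument is aimed at is false, while the statement to be proved --- e.g.\ $\overline{x}'_{12}\overline{x}'_{13}\overline{x}'_{14}=\mathrm{id}$ --- concerns a product whose factors live in $n-1$ different semidirect-product decompositions and which admits no single conjugating word $w$. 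Your boundary-of-the-disk heuristic is exactly the paper's proof of Proposition \ref{ConjRel}(2) (Figure \ref{ConjRelFig02}), not an argument for the conjecture. Beyond this, the later steps (matching iterated commutators in $RF_i(n-1)$ with iterated commutators of the $\overline{x}_{kl}$, and the inductive cancellation) are stated as a programme rather than carried out, so even after fixing the indices a proof would still be missing its main content.
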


\begin{table}[phtb] 
   \caption{Simplified partial conjugations $\overline{x}'_{ij}$ for 5-component string links} \label{ModAct5-compSF}
\begin{center}
 \\
\end{center}
\end{table}

\section{Algorithm} 

In \cite{HL}, Habegger and Lin showed that there is an algorithm which determines whether given two links are link-homotopic or not by using the actions of the partial conjugations. However, in the paper, the actions are not calculated explicitly. As an application of Section \ref{CalcParConj}, we can run the algorithm. 
\par
Let $L$ and $L'$ be two $n$-component links. By cutting them at appropriate points, we have string links corresponding to them. Then we transform them to Meilhan and Yasuhara's canonical form \cite{MY}. Let $Y_i$ (resp. $Y'_i$) be the sequence of the numbers of $i$-th degree claspers of $L$ (resp. $L'$). For example, for 4-component links $L$ (resp. $L'$), $Y_1=(y_{12}, y_{13}, y_{14}, y_{23}, y_{24}, y_{34})$, $Y_2=(y_{123}, y_{124}, y_{134}, y_{234})$ and $Y_3=(y_{1234}, y_{1324})$ (resp. $Y'_1=(y'_{12}, y'_{13}, y'_{14}, y'_{23}, y'_{24}, y'_{34})$, $Y'_2=(y'_{123}, y'_{124}, y'_{134}, y'_{234})$ and $Y'_3=(y'_{1234}, y'_{1324})$).  
 Then, for the links $L=(Y_1, Y_2, \dots, Y_{n-1})$ and $L'=(Y'_1, Y'_2, \dots, Y'_{n-1})$, Habegger and Lin's algorithm is translated in our notations as follows. We will change $Y_i$ to $Y'_i$ in order. Note that the partial conjugations do not change the numbers $y_{ij}$ of $C_1$-trees (they are invariants; the linking numbers). 

\begin{enumerate}
\renewcommand{\labelenumi}{\arabic{enumi}.}
\item Check $Y_1=Y'_1$. If it holds, then go to Step 2, otherwise $L$ and $L'$ are not link-homotopic. 
\item Let $\mathcal{S}_1$ be the set of partial conjugations. Find a relation $\Psi_{1} \in \mathcal{S}_1$ which satisfies $\Psi_{1}(Y_2)=Y'_2$ for $L$. If there is such a relation, then replace 
$$L=(Y_1, Y_2, Y_3, Y_4, \dots)$$ 
to 
\begin{align*}
L_1&=\Psi_1(L)\\ 
&= (Y_1, \Psi_1(Y_2), \Psi_1(Y_3), \Psi_1(Y_4), \dots)\\
&= (Y'_1, Y'_2, \Psi_1(Y_3), \Psi_1(Y_4), \dots)
\end{align*} 
and go to Step 3, otherwise $L$ and $L'$ are not link-homotopic. 
\item Let $\mathcal{S}_2$ be the set of the partial conjugations which do not change $Y'_2$ of $L_1$. Find a relation $\Psi_{2} \in \mathcal{S}_2$ which satisfies $\Psi_2\circ\Psi_1(Y_3)=Y'_3$ for $L_1$. If there is such a relation, then replace 
$$L_1=(Y'_1, Y'_2, \Psi_1(Y_3), \Psi_1(Y_4), \dots)$$ 
to 
\begin{align*}
L_2&=\Psi_2(L_1)\\
&=(Y'_1, Y'_2, \Psi_2\circ\Psi_1(Y_3), \Psi_2\circ\Psi_1(Y_4), \dots)\\
&=(Y'_1, Y'_2, Y'_3, \Psi_2\circ\Psi_1(Y_4), \dots)
\end{align*} 
and go to Step 4, otherwise $L$ and $L'$ are not link-homotopic. 
\item Let $\mathcal{S}_3$ be the set of the partial conjugations which do not change $Y'_2$ and $Y'_3$ of $L_2$. Find a relation $\Psi_{3} \in \mathcal{S}_3$ which satisfies $\Psi_3\circ\Psi_2\circ\Psi_1(Y_4)=Y'_4$ for $L_2$. If there is such a relation, then replace 
$$L_2=(Y'_1, Y'_2, Y'_3, \Psi_2\circ\Psi_1(Y_4), \dots)$$ 
to 
\begin{align*}
L_3&=\Psi_3(L_2)\\
&=(Y'_1, Y'_2, Y'_3, \Psi_3\circ\Psi_2\circ\Psi_1(Y_4), \dots)\\
&=(Y'_1, Y'_2, Y'_3, Y'_4, \dots)
\end{align*}
and go to the next step, otherwise $L$ and $L'$ are not link-homotopic. 
\item Repeat the operations until $Y_{n-1}$ changes to $Y'_{n-1}$. Then,  $L'=\Psi_{n-1}\circ\cdots\circ\Psi_1(L)$ and $L$ and $L'$ are link-homotopic. 
If it stops before that, $L$ and $L'$ are not link-homotopic. 
\end{enumerate}

\begin{remark} \label{algorithm-rem}
For the 4- and 5-component cases, the algorithm goes as follows. 
\par
The set $\mathcal{S}_1$ is generated by $X_n$ in Theorems \ref{Rep4-compLink} and \ref{Rep5-compLink} for the 4- and 5-component cases respectively. 
In Step 2, $\Psi_{1}$ can be presented as $\prod_{\overline{x}'_{ij}\in X_n} {\overline{x}'}_{ij}^{a_{ij}}$, where $a_{ij}\in \mathbb{Z}$. 
To find $\Psi_{1}$ such that $\Psi_{1}(Y_2)=Y'_2$, it is needed to find $a_{ij}$ satisfying
\begin{equation}
\prod_{\overline{x}'_{ij}\in X_n} {\overline{x}'}_{ij}^{a_{ij}}(y_{klm})=y'_{klm} \label{step1}
\end{equation}
for all pairs $(y_{klm}, y'_{klm})$ ($y_{klm}\in Y_2$, $y'_{klm}\in Y'_2$). This problem is a system of linear Diophantine equations and solvable if it has solutions. 
\par
In Step 3, the set $\mathcal{S}_2$ includes the commutators $[\overline{x}_{ij}, \overline{x}_{kl}]$ or the simplified commutators $[\overline{x}_{ij}, \overline{x}_{kl}]'$, see Tables \ref{ActCom4-compSF} or \ref{ModComm5-compSF}. The set $\mathcal{S}_2$ is generated by relations presented as 
$\prod_{\overline{x}'_{ij}\in X_n} {\overline{x}'}_{ij}^{b_{ij}}$, where $b_{ij}\in \mathbb{Z}$, which satisfy
$\prod_{\overline{x}'_{ij}\in X_n} {\overline{x}'}_{ij}^{b_{ij}}(Y'_2)=Y'_2$ and the (simplified) commutators in Table \ref{ActCom4-compSF} or \ref{ModComm5-compSF}.
To find the generators of $\mathcal{S}_2$ which are not the (simplified) commutators, it is needed to find $b_{ij}$ satisfying
\begin{equation*}
\prod_{\overline{x}'_{ij}\in X_n} {\overline{x}'}_{ij}^{b_{ij}}(y'_{klm})=y'_{klm} \label{step2-1}
\end{equation*}
for all $y'_{klm}\in Y'_2$. This problem is the homogeneous system of linear Diophantine equations corresponding to (\ref{step1}). 
\par
Let $\psi_i$ be the generators of $\mathcal{S}_2$. 
To find $\Psi_{2}$ such that $\Psi_2\circ\Psi_{1}(Y_3)=Y'_3$, it is needed to find $c_{i}$ satisfying
\begin{equation}
\prod_{\psi_i} {\psi}_{i}^{c_i}(y_{klmn})=y'_{klmn} \label{step2-2}
\end{equation}
for all pairs $(y_{klmn}, y'_{klmn})$ ($y_{klmn}\in Y_3$, $y'_{klmn}\in Y'_3$). This problem is a system of linear Diophantine equations. 
\par
For the 5-component case, the commutators $[\overline{x}_{ij},[\overline{x}_{kl},\overline{x}_{mn}]]$ do not change $Y'_3$, see Table \ref{Act3Com5-compSF}. 
Thus, in Step 4, the set $\mathcal{S}_3$ includes the commutators $[\overline{x}_{ij},[\overline{x}_{kl},\overline{x}_{mn}]]$. The set $\mathcal{S}_3$ is generated by relations presented as $\prod_{\psi_i} {\psi}_{i}^{d_i}$, where $d_i\in \mathbb{Z}$, which satisfy $\prod_{\psi_i} {\psi}_{i}^{d_i}(Y'_3)=Y'_3$ and $[\overline{x}_{ij},[\overline{x}_{kl},\overline{x}_{mn}]]$ in Table \ref{Act3Com5-compSF}. To find the generators of $\mathcal{S}_3$ which are not $[\overline{x}_{ij},[\overline{x}_{kl},\overline{x}_{mn}]]$, it is needed to find $d_{i}$ satisfying
\begin{equation*}
\prod_{\psi_i} {\psi}_{i}^{d_i}(y_{klmn})=y_{klmn}
\end{equation*}
for all $y'_{klmn}\in Y'_3$. This problem is the homogeneous system of linear Diophantine equations corresponding to (\ref{step2-2}). 
\end{remark}

\begin{remark}
In \cite{KM2}, the authors gave a presentation of $\mathcal{L}_4$ by using the ``tetrahedron'' standard form, which coincides with Theorem \ref{Rep4-compLink}, and showed examples of the algorithm. For explicit calculations of the algorithm, see Section 4 of the paper. 

\end{remark}

\section{Test of the results} \label{test}

The calculations of the partial conjugations $\overline{x}_{ij}$ are complicated and mistakes easily occur. 
In this section, we also calculate the actions of conjugations $(\overline{x}_j,x_j)_i$ in Figure \ref{conjugation} for the canonical forms and test the results in Table \ref{Act4-compSF} and \ref{Act5-compSF}. We abbreviate $(\overline{x}_j,x_j)_i$ to $cx_{ij}$ for simplicity. The conjugation $cx_{ij}$ $(1\leq i<j \leq n)$ for the canonical form $b$ of $n$-component string links is shown in Figure \ref{ConjFig}. We transform the shape of Figure \ref{ConjFig} back to the canonical form by using the relations in Section \ref{preparations}. Then the numbers of claspers change and the differences of the numbers from the previous canonical form present the action of the conjugation $cx_{ij}$. The results of the actions of $cx_{ij}$ are in Table \ref{ActConj4-compSF} for the 4-component case and Table \ref{ActConj5-compSF} for the 5-component case. We also use the notation $cx_{ji}$ $(1\leq i<j \leq n)$ with $cx_{ij}=cx_{ji}$.

\begin{figure}[ht] 
\raisebox{-36 pt}{\begin{overpic}[width=120 pt]{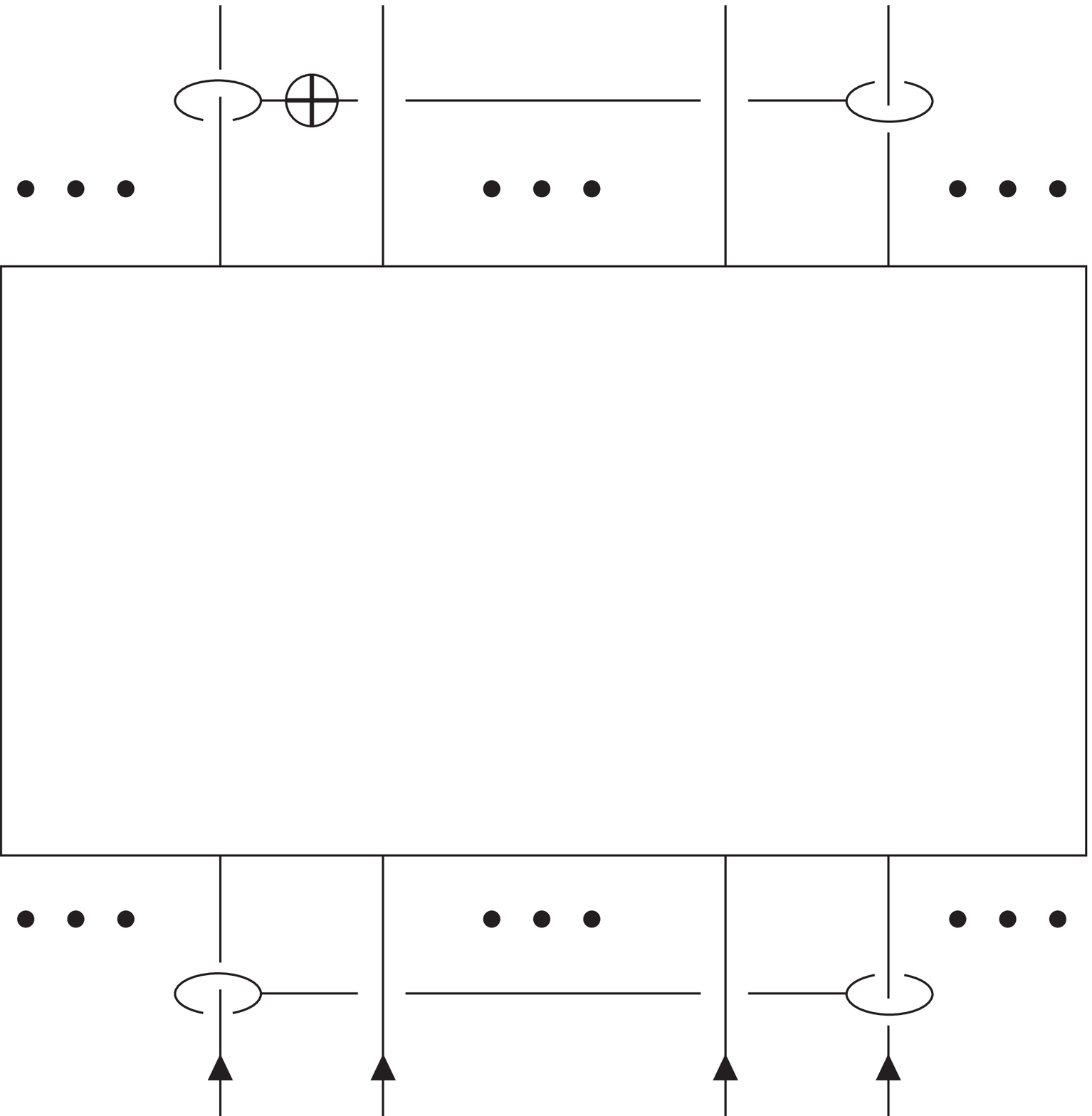}
\put(22,-13){$i$} \put(95,-13){$j$} \put(57,55){\Large $b$}
\end{overpic}}
\vspace{0.3cm}
\caption{Conjugation $cx_{ij}$ for the canonical form $b$} \label{ConjFig}
\end{figure}

\begin{proposition} \label{ConjRel}
For the actions of conjugations $cx_{ij}$ and partial conjugations $\overline{x}_{ij}$ for $n$-component string links, the following relations hold. 
\begin{enumerate}
\item $\displaystyle \prod_{k=2}^{n}CX_{k}= id$, where $\displaystyle CX_{k}= \prod_{i=1}^{k-1}cx_{ik}$.
\item $\displaystyle \prod_{\substack{i=1\\ i\neq j}}^{n}\overline{x}_{ij}=\prod_{\substack{i=1\\ i\neq j}}^{n}cx_{ij}$ for each $1\leq j\leq n$.
\end{enumerate}
\end{proposition}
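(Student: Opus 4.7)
Both identities will be proved by direct geometric arguments in $\mathscr{H}(2n)$, realizing each side as an explicit $2n$-string link and establishing the equality through ambient isotopy rel boundary, combined with link-homotopy moves where necessary.

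For (1), I view $\prod_{k=2}^n CX_k = \prod_{k=2}^n \prod_{i=1}^{k-1} cx_{ik}$ as a stack of conjugating bands in $D^2\times[0,1]$, where each generator $cx_{ik}=(\overline{x}_k,x_k)_i$ is a meridional loop that wraps the pair of strands $(i,\overline{i})$ around the pair $(k,\overline{k})$. The prescribed order is exactly what one needs for an inductive unwrapping: starting from the outermost block $CX_n$, the bands $cx_{1n}, cx_{2n}, \ldots, cx_{n-1,n}$ sit on the outside and can be slid off one at a time by an ambient isotopy that uses the earlier strands as rails; after $CX_n$ is removed, the same argument applies to $CX_{n-1}$ in the residual picture, and so on down to $CX_2$, leaving $1_{2n}$. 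Equivalently, the total composition realizes the inner automorphism by the boundary element $x_1 x_2 \cdots x_n$ of the bounding disk of the closure, which is trivial as an element of $\mathscr{H}(2n)$ and therefore acts as the identity on link-homotopy classes.

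For (2), I compare the generators pair by pair: for each $i\neq j$, the full conjugation $cx_{ij}$ differs from the partial conjugation $\overline{x}_{ij}$ only in that it additionally wraps the top end of strand $j$ around $x_i$. Write $cx_{ij}=\overline{x}_{ij}\cdot w_{ij}$, where $w_{ij}$ is a small meridian of $x_i$ supported near the top of strand $j$. Multiplying over all $i\neq j$, the combined top wrap $\prod_{i\neq j}w_{ij}$ is a single loop near the top of strand $j$ encircling the product $\prod_{i\neq j} x_i$; this loop bounds an embedded disk in the top boundary $D^2\times\{1\}$ with only the $j$-th puncture removed, and so it is trivial in $\mathscr{H}(2n)$. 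Consequently $\prod_{i\neq j} cx_{ij} = \prod_{i\neq j}\overline{x}_{ij}$ modulo link-homotopy.

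The main obstacle will be the bookkeeping. In (1) one must verify that the stated order is tight enough for each band to be unwrapped with no residual commutator terms surviving at each step; in (2) one must make rigorous the cancellation of the top-wrap factors, either by an explicit isotopy in the top slab $D^2\times[1-\varepsilon,1]$ or by an algebraic check in $RF(n)$ using Lemma~\ref{clasperlemma}. An independent verification, well suited to the ``test'' role of this section, is to evaluate both sides of each identity against the tables of the $\overline{x}'_{ij}$ and $cx_{ij}$ computed in Section~\ref{CalcParConj}.
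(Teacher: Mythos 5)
Both parts of your argument contain a genuine gap, not merely bookkeeping left to be done. For (1), the inductive unwrapping cannot work as described: neither an individual generator $cx_{ik}$ nor a whole block $CX_k$ is trivial, so they cannot be ``slid off one at a time \ldots leaving $1_{2n}$.'' Concretely, in the $4$-component case Table \ref{ActConj4-compSF} shows that $CX_4=cx_{14}cx_{24}cx_{34}$ changes $y_{124}$ by $(-y_{24}+y_{12})+(y_{14}-y_{12})+0=y_{14}-y_{24}$, which is nonzero in general, so already the first block you propose to remove is not removable. The triviality of $\prod_{k=2}^{n}CX_{k}$ is a global phenomenon: the paper's proof recognizes the entire product as a full left twist above $\sigma$ together with a full right twist below $\sigma$ (Figure \ref{ConjRelFig01}), and these two twists cancel against each other by a single ambient isotopy. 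Your ``equivalently'' remark about the inner automorphism by the boundary word is closer in spirit to this, but it is offered as equivalent to the (false) step-by-step removal and is not itself substantiated.

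For (2), the decomposition $cx_{ij}=\overline{x}_{ij}\cdot w_{ij}$ is a reasonable starting point, but the cancellation you propose fails. The accumulated wrap $\prod_{i\neq j}w_{ij}$ is a loop on strand $j$ representing $\prod_{i\neq j}x_i$, and to delete it you must contract it in the complement of the \emph{other} $n-1$ strands, all of which it encircles; it therefore does not bound a disk ``with only the $j$-th puncture removed'' --- that punctured disk is the complement of strand $j$, the one strand the loop is in fact allowed to meet, while in $D^2$ minus the punctures $p_i$ ($i\neq j$) the class $\prod_{i\neq j}x_i$ is a nontrivial element of the free group. What actually makes the identity true, and what Figure \ref{ConjRelFig02} carries out for $n=4$, $j=4$, is different: one first applies a genuine link-homotopy (self-crossing changes on strand $j$, cf.\ Lemma \ref{vanish}) to merge the individual wraps into a single loop encircling all $n$ strands; this loop is parallel to $\partial D^2\times\{t\}$ and can be slid along the side boundary of the cylinder from below $\sigma$ to above $\sigma$, which is exactly the difference between the partial conjugations and the conjugations. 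The identity is thus an equality between two nontrivial elements, not the cancellation of a trivial factor concentrated at the top, and the general $j$ is then handled by first isotoping the $j$-th component to the outermost position. Finally, your suggestion to verify the identities against the tables of Section \ref{CalcParConj} cannot serve as a proof here, since Proposition \ref{ConjRel} is used in the paper precisely to test those tables.
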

\begin{proof}
(1) $\displaystyle \prod_{k=2}^{n}CX_{k}$ make left (resp. right) full-twists above (resp. under) a string link $\sigma$, see Figure \ref{ConjRelFig01} for the 4-component case. The twists cancel by isotopy and the action is the identity.
\par
(2) We show the relation for the 4-component case. See Figure \ref{ConjRelFig02}. The first figure presents $\displaystyle \prod_{\substack{i=1\\ i\neq 4}}^{4}\overline{x}_{i4}$ (i.e. $j=4$). Up to link-homotopy, it is transformed to the last figure which presents $\displaystyle \prod_{\substack{i=1\\ i\neq 4}}^{4}cx_{i4}$. For the cases $j\neq 4$, to see the relation, first move the $j$-th component to the outermost position under the other components, second transform the shape similar to the case $j=4$ and finally move back the $j$-th component to the original position. The relations for other than the 4-component case are shown similarly.
\end{proof}

\begin{figure}[ht] 
$
\raisebox{-45 pt}{\begin{overpic}[width=100pt]{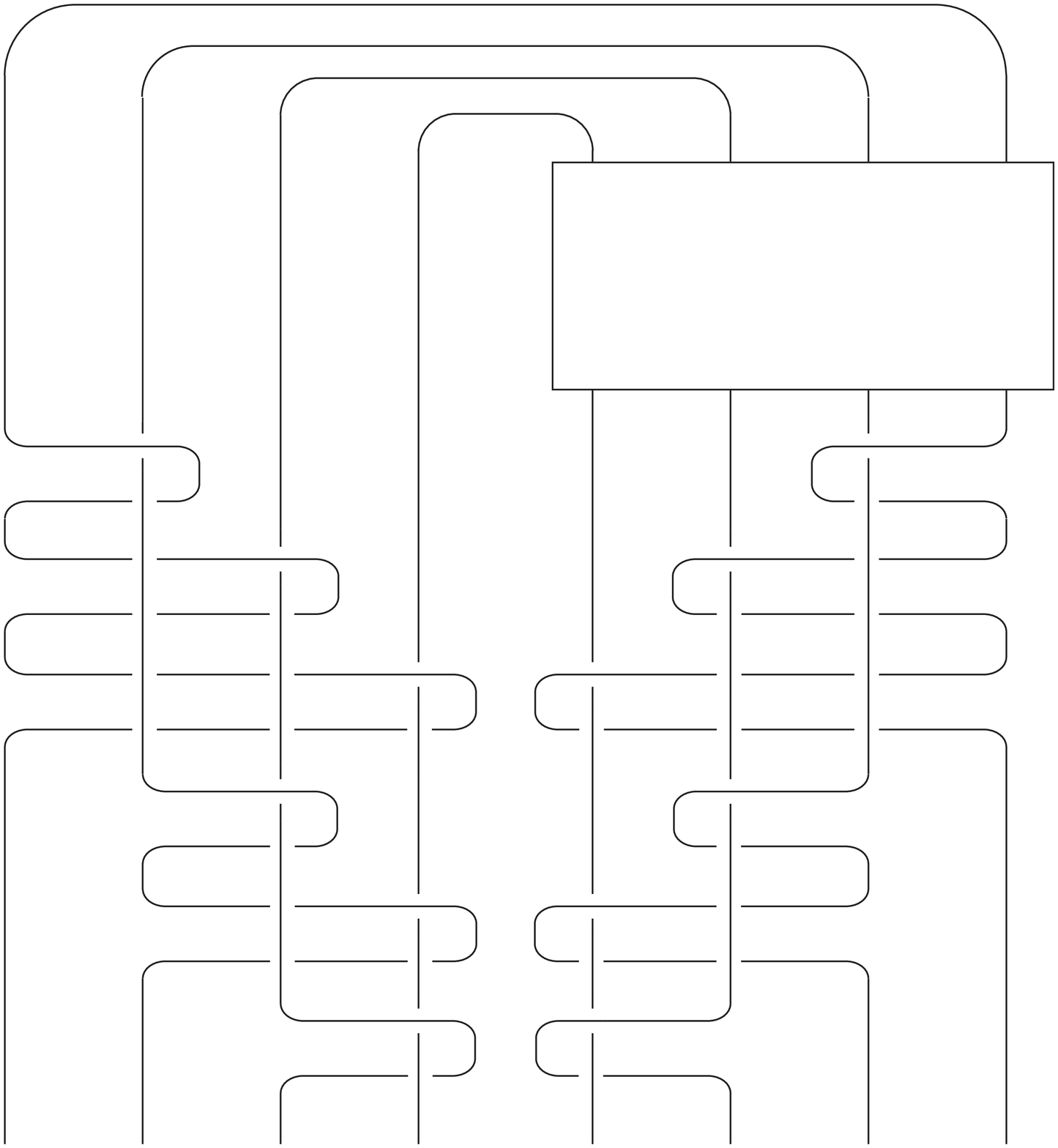}
\put(72,80){$\sigma$} 
\put(-27,50){\footnotesize$CX_4$}
\put(-27,24){\footnotesize$CX_3$} 
\put(-27,4){\footnotesize$CX_2$}
\end{overpic}}
\hspace{0.4cm}
\mbox{$=$}
\hspace{0.4cm}
\raisebox{-45 pt}{\begin{overpic}[width=100pt]{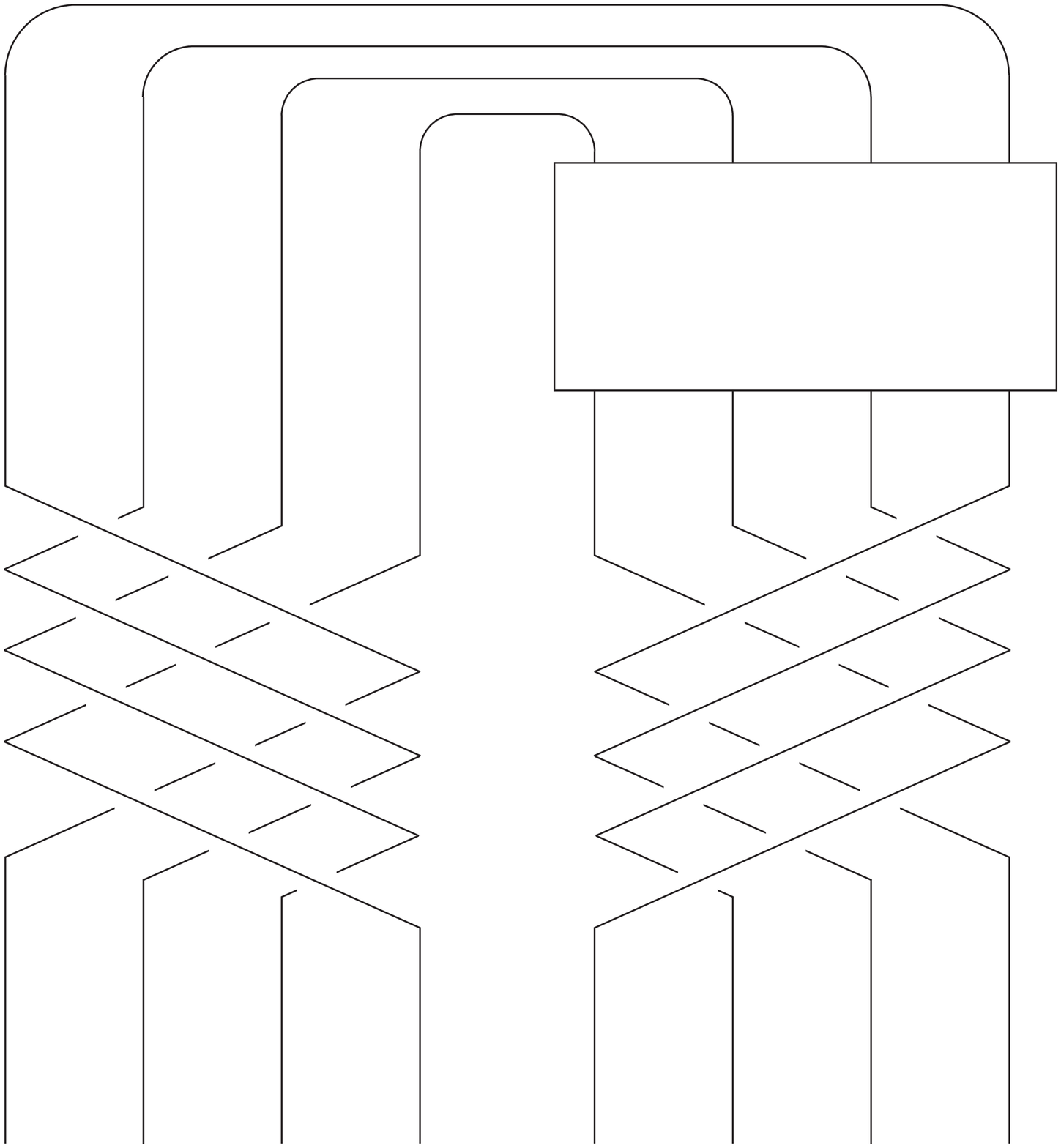}
\put(73,80){$\sigma$}
\end{overpic}}
$
\caption{Full twist} \label{ConjRelFig01}
\end{figure}

\begin{figure}[ht] 
$
\raisebox{-54 pt}{\begin{overpic}[width=100pt]{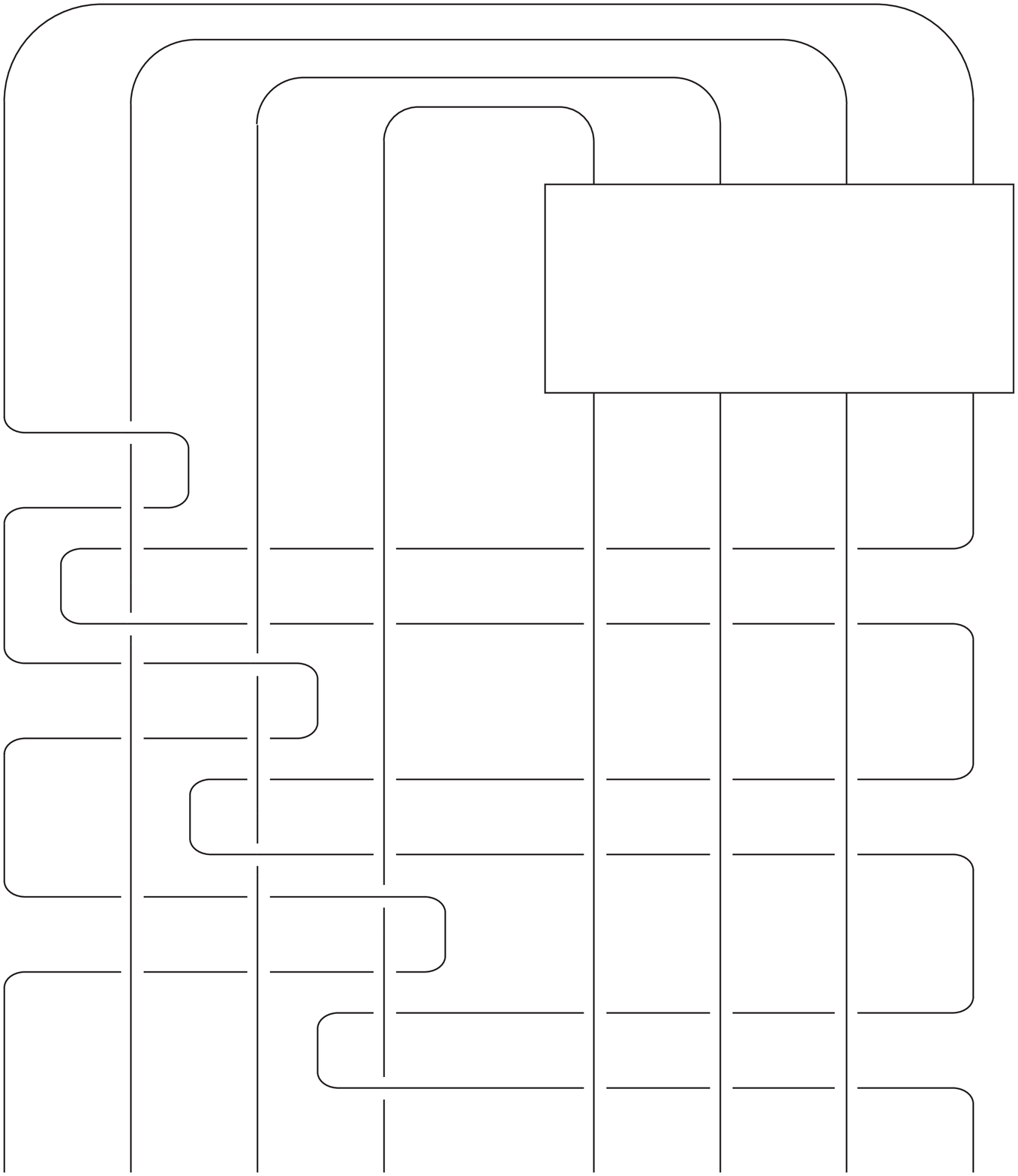}
\put(73,85){$\sigma$}
\put(-20,62){\small$x_{34}$}
\put(-20,39){\small$x_{24}$} 
\put(-20,16){\small$x_{14}$}
\end{overpic}}
\hspace{0.3cm}
\overset{\mbox{l.h.}}{\mbox{\large $\sim$}}
\hspace{0.3cm}
\raisebox{-39 pt}{\begin{overpic}[width=100pt]{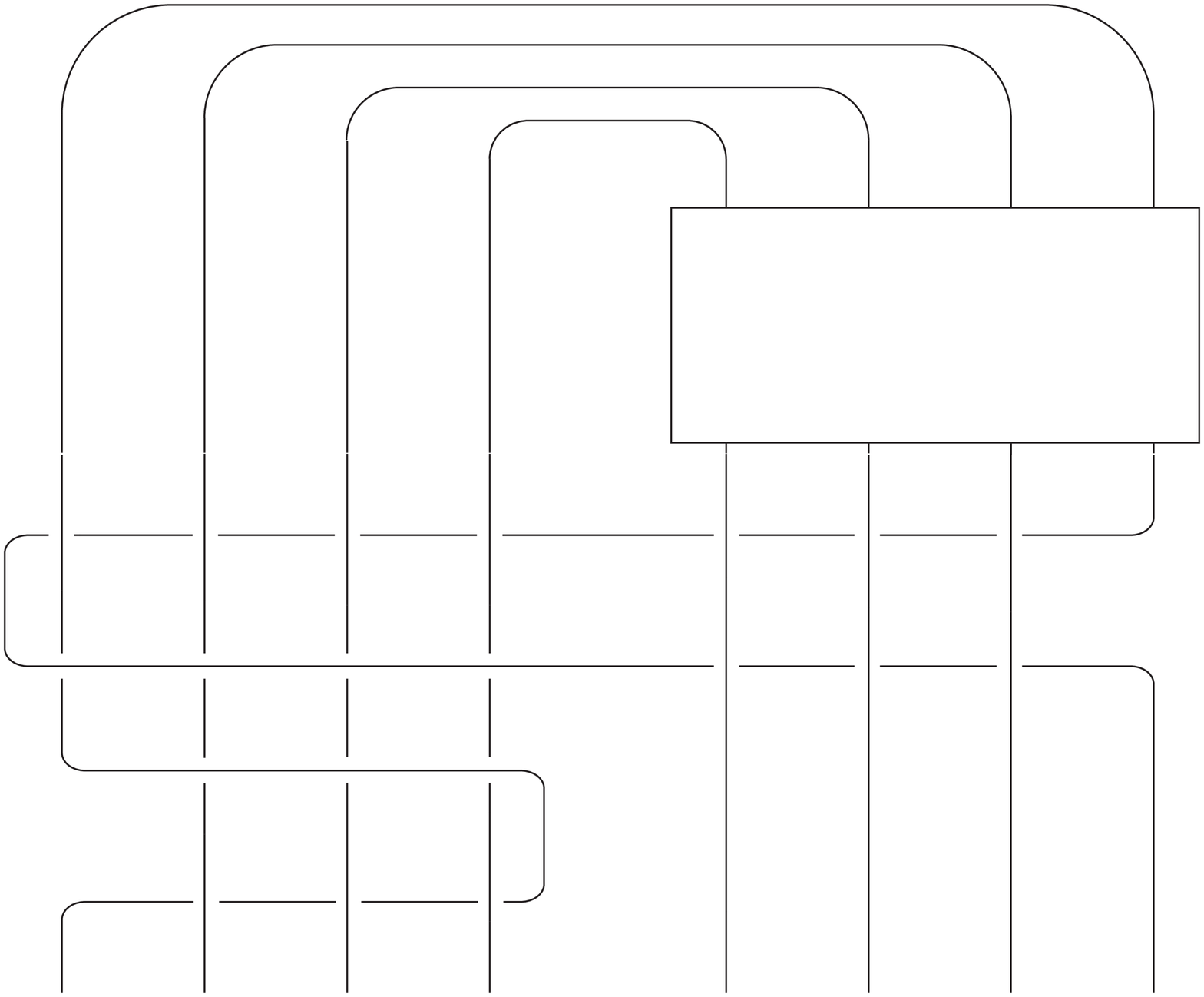}
\put(74,53){$\sigma$}
\end{overpic}}
\hspace{0.3cm}
=
\hspace{0.3cm}
\raisebox{-39 pt}{\begin{overpic}[width=100pt]{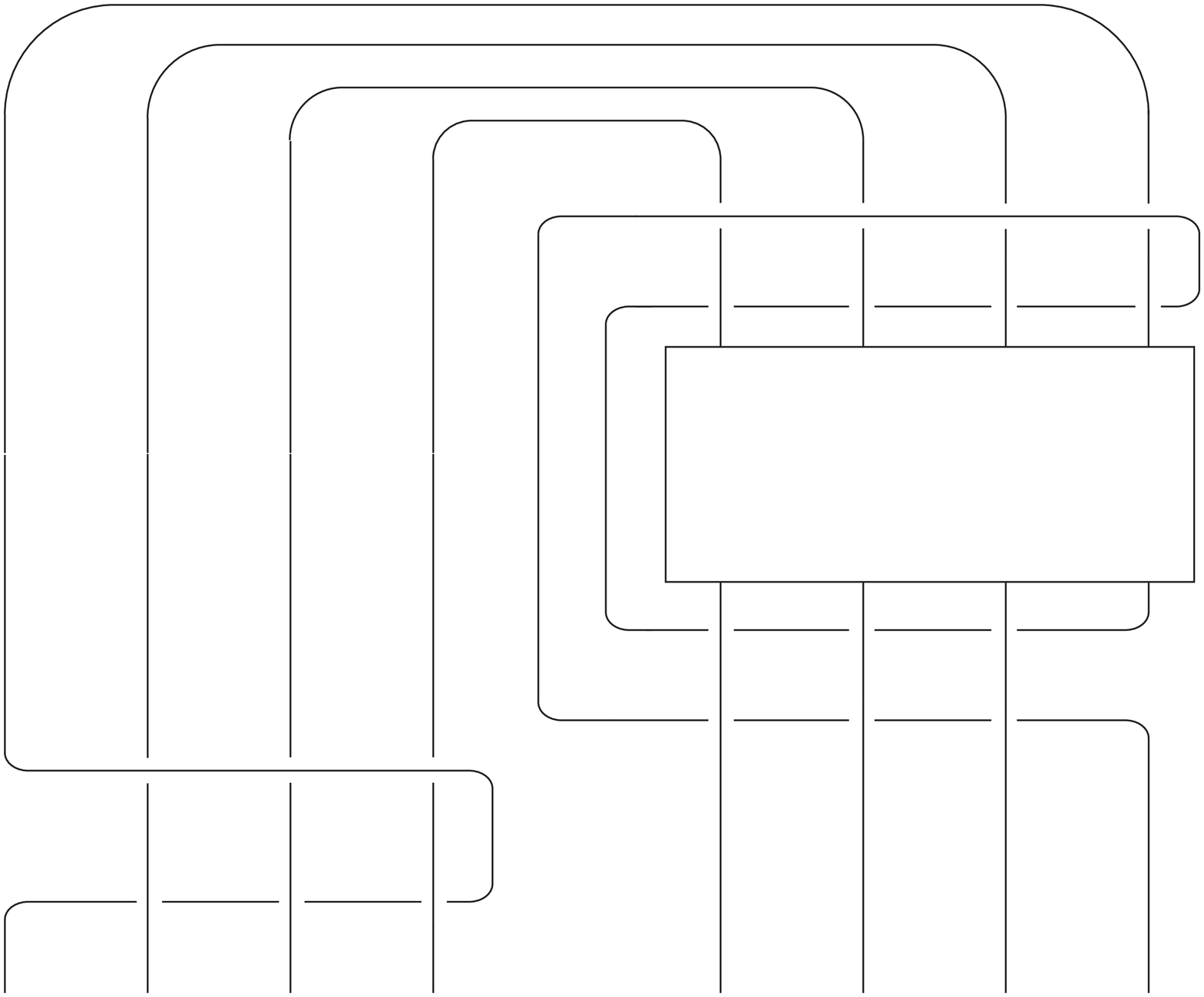}
\put(74,40){$\sigma$}
\end{overpic}}
$\\[20pt]

$
=
\hspace{0.3cm}
\raisebox{-37 pt}{\begin{overpic}[width=100pt]{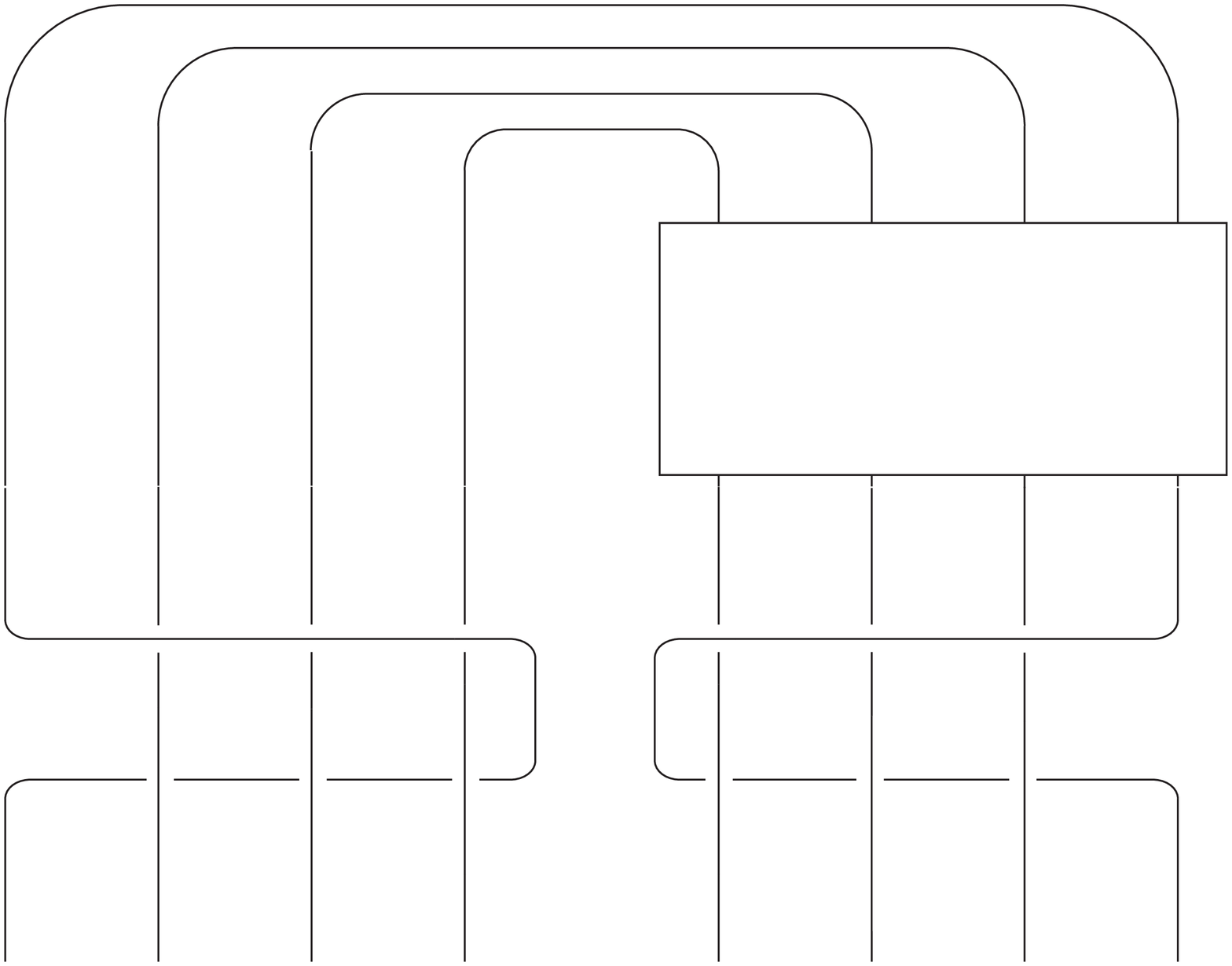}
\put(73,47){$\sigma$}
\end{overpic}}
\hspace{0.3cm}
=
\hspace{1.2cm}
\raisebox{-40 pt}{\begin{overpic}[width=100pt]{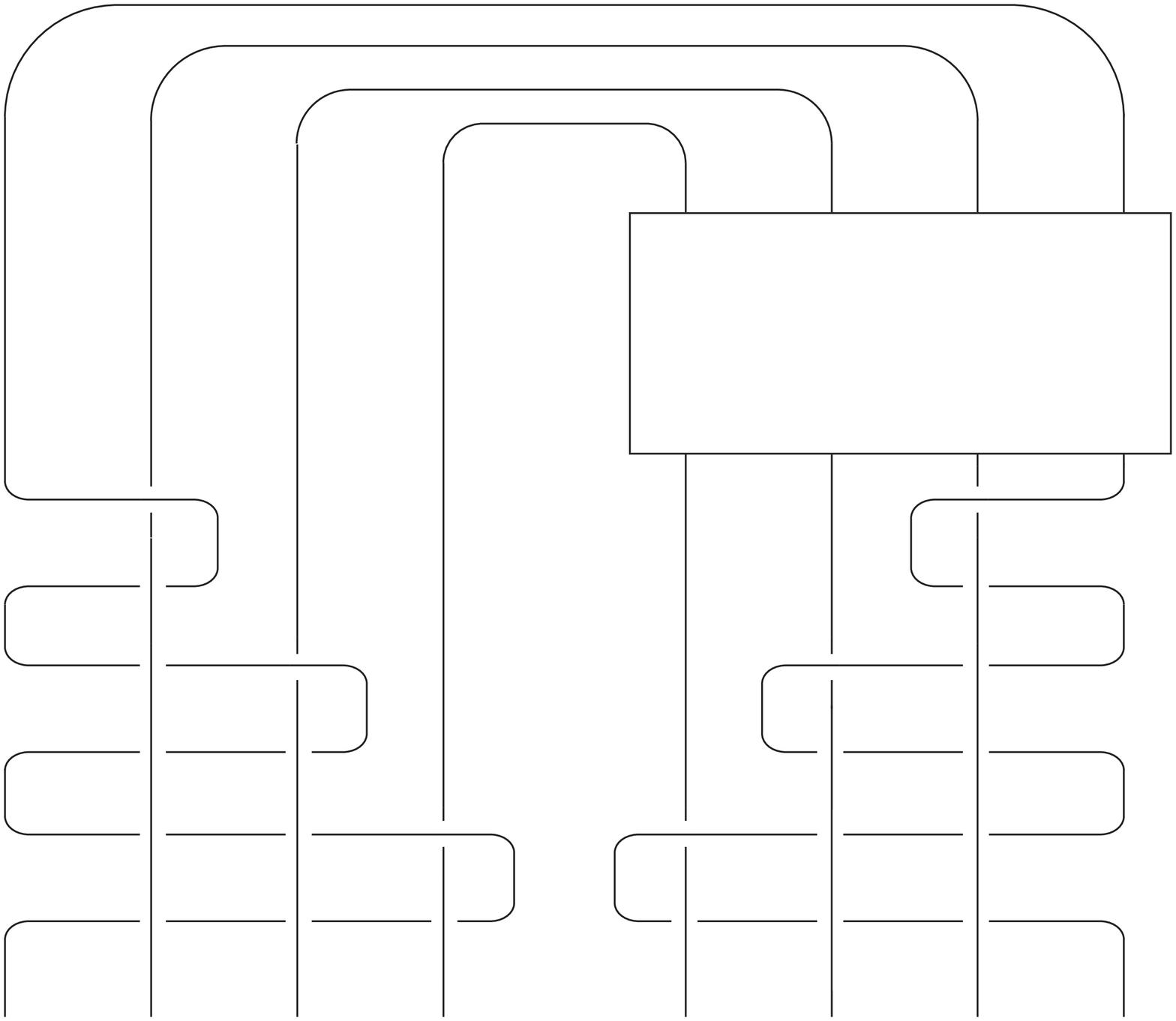}
\put(73,56){$\sigma$}
\put(-25,40){\small$cx_{34}$}
\put(-25,25){\small$cx_{24}$} 
\put(-25,10){\small$cx_{14}$}
\end{overpic}}
$
\caption{Relation between the partial conjugations and the conjugations.} \label{ConjRelFig02}
\end{figure}

The relations (1) and (2) in Proposition \ref{ConjRel} can be used to test the results of $cx_{ij}$ and $\overline{x}_{ij}$ respectively. 
We have checked these relations by using the software Mathematica. The files are in the web page \cite{Mweb}. 

\appendix

\section{Examples of calculations for $\overline{x}_{ij}$} \label{example-calc}
We show an example of the calculation of the action $\overline{x}_{13}$ for the 4-component canonical form. 
We use the relations of the claspers in Section \ref{preparations}. They also induce the relations in Figure \ref{clasper-rel-for-SF} up to link-homotopy, where the vertical lines are components of string links and the new claspers in the right-hand sides go parallel to the other claspers in outside the figures.

\begin{figure}[ht] 
$$(a)\hspace{0.5cm}\raisebox{-17pt}{\begin{overpic}[width=70pt]{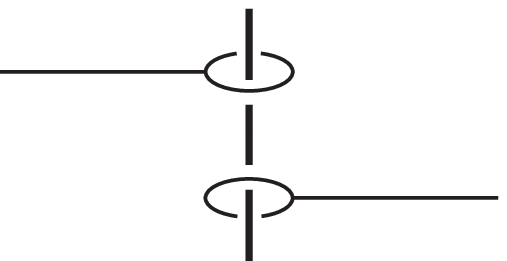}
\end{overpic}}
\hspace{0.3cm}
=
\hspace{0.3cm}
\raisebox{-24 pt}{\begin{overpic}[width=70pt]{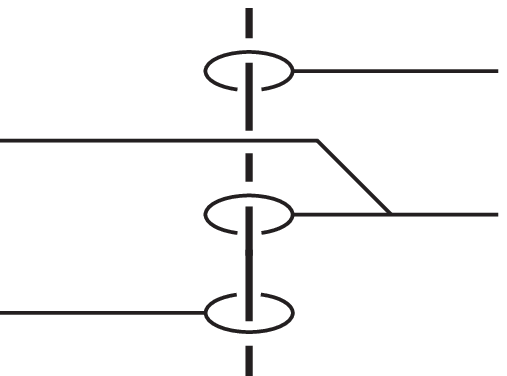}
\end{overpic}}
\hspace{1.0cm}
(b)\hspace{0.5cm}\raisebox{-17 pt}{\begin{overpic}[width=70pt]{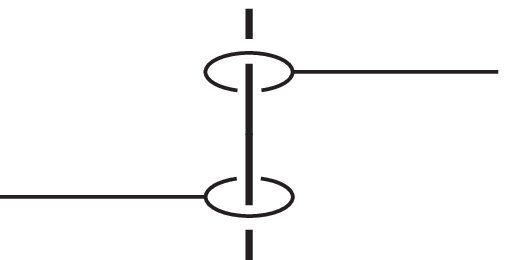}
\end{overpic}}
\hspace{0.3cm}
=
\hspace{0.3cm}
\raisebox{-24 pt}{\begin{overpic}[width=70pt]{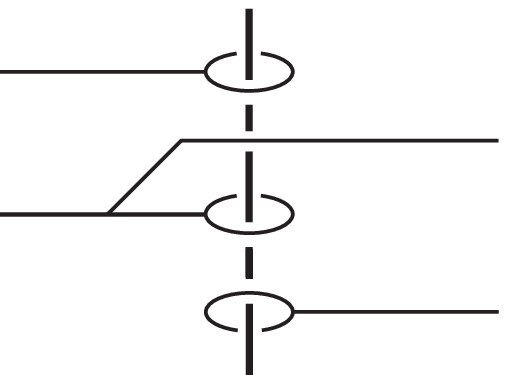}
\end{overpic}}
$$

$$(c)\hspace{0.4cm}\raisebox{-19 pt}{\begin{overpic}[width=70pt]{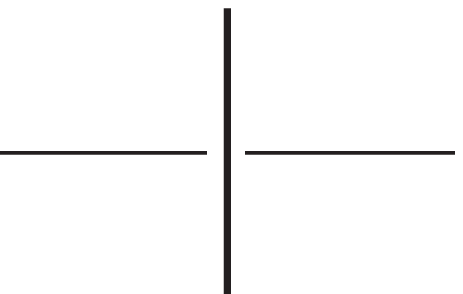}
\end{overpic}}
\hspace{0.3cm}
=
\hspace{0.3cm}
\raisebox{-19 pt}{\begin{overpic}[width=69pt]{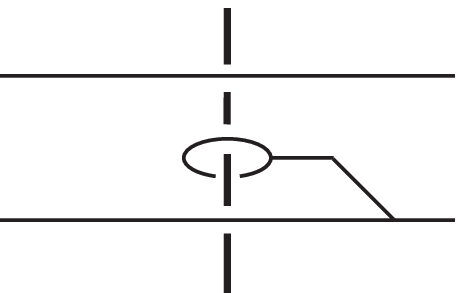}
\end{overpic}}
$$
\caption{Relations of the claspers for string links} \label{clasper-rel-for-SF}
\end{figure}

The calculation of the action $\overline{x}_{13}$ is in Figure \ref{Example01}, where $b$ is the canonical form for 4-component string links in Figure \ref{4-compSF}. The partial conjugation $\overline{x}_{13}$ is presented as the first figure. We transform it back to the canonical form. In the figures, we only write the numbers of claspers for the new claspers. 
First, we move the clasper over the canonical form $b$ down to under it. Then the clasper cancel with the twisted clasper bottommost. In the middle, by (2) in Lemma \ref{clasperlemma} and ($a$) in Figure \ref{clasper-rel-for-SF}, the new claspers appear which are shown in the second figure. 
Second, we move the leaves in the left-side to the right-side. 
Third, we start to move the new claspers to the positions of the canonical form. In the fourth figure, $-y_{234}$ parallel claspers moves to the position of the canonical form and, by ($c$) in Figure \ref{clasper-rel-for-SF}, new $+y_{34}$ $C_3$-claspers appear. In the fifth figure, the claspers in the fourth figure move to the positions of the canonical form. In the process, new claspers occur by (2) in Lemma \ref{clasperlemma} and $(b)$ in Figure \ref{clasper-rel-for-SF}. 
Finally, we obtain the new canonical form in the last figure. The difference of the numbers of the claspers from $b$ presents the action of $\overline{x}_{13}$. 
\par
By the similar calculations, we obtain the actions $\overline{x}_{ij}$
 for the 4-component case (Table \ref{Act4-compSF}) and the 5-component case (Table \ref{Act5-compSF}). 
 
 \begin{figure}[ht] 
$
\raisebox{-70 pt}{\begin{overpic}[width=123pt]{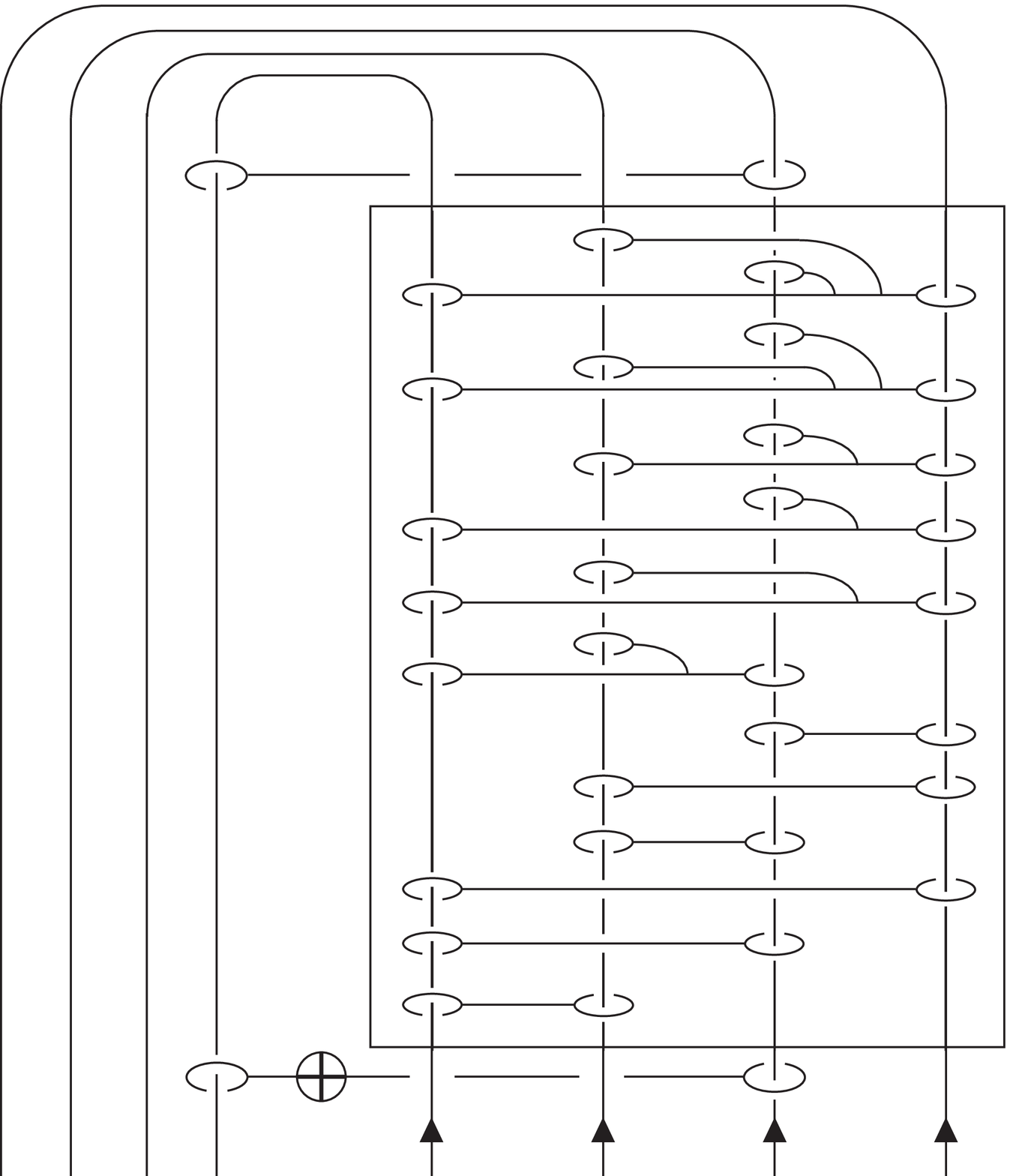}
\put(50,-12){$1$}\put(71,-12){$2$}
\put(92,-12){$3$}\put(113,-12){$4$}
\put(129,63){\large $b$}
\end{overpic}}
\hspace{0.7cm}\mbox{\Large$\rightarrow$}\quad
\raisebox{-70 pt}{\begin{overpic}[width=120pt]{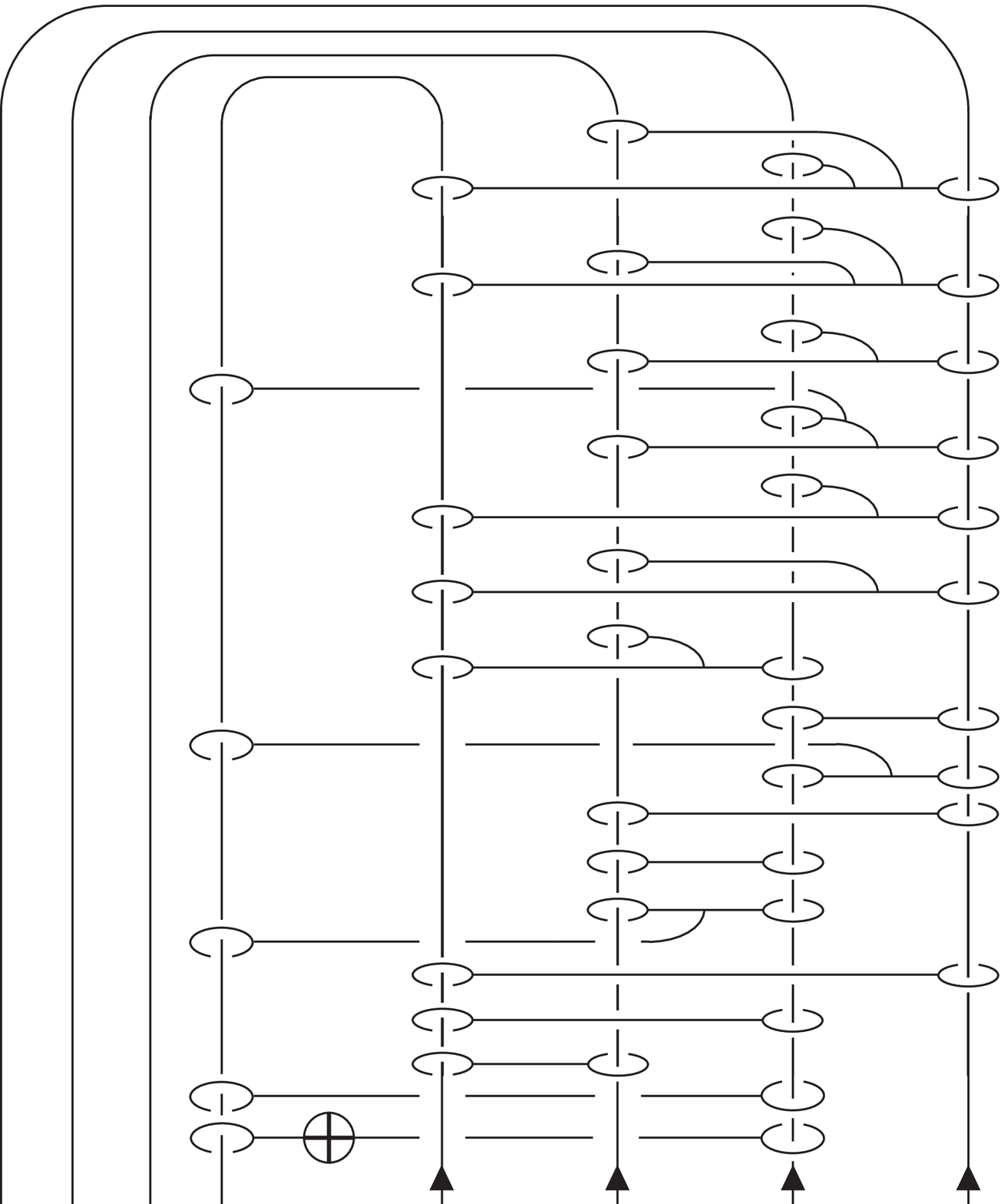}
\put(98,33){\tiny $+y_{23}$}\put(121,50){\tiny $+y_{34}$}
\put(121,88){\tiny $+y_{234}$}
\end{overpic}}
\hspace{0.9cm}\mbox{\Large$\rightarrow$}\quad
\raisebox{-70 pt}{\begin{overpic}[width=120pt]{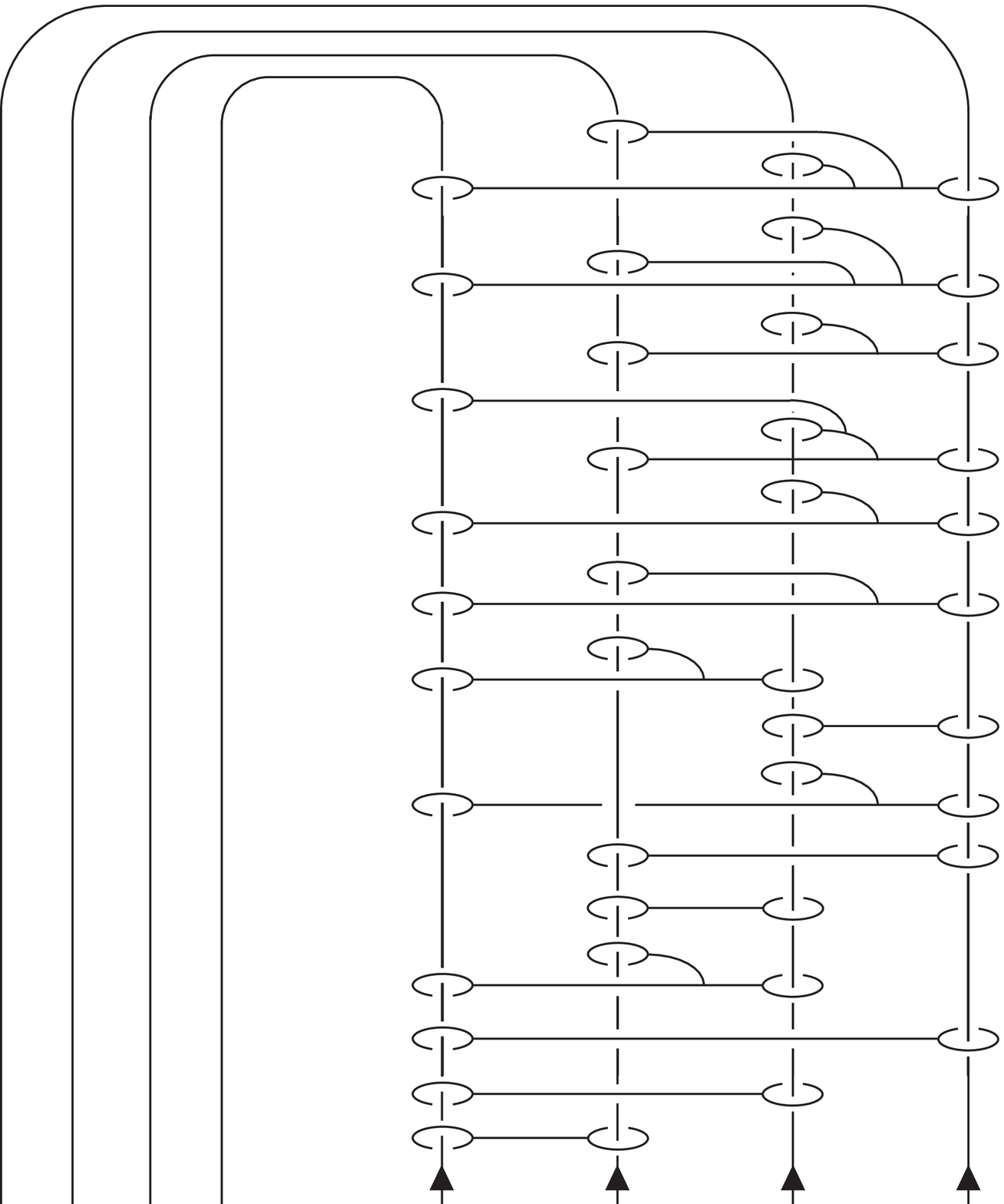}
\put(98,27){\tiny $-y_{23}$}\put(121,46){\tiny $+y_{34}$}
\put(121,87){\tiny $-y_{234}$}
\end{overpic}}
$
\\[30pt]

\if0
$
\mbox{\Large$\rightarrow$}\quad
\raisebox{-70 pt}{\begin{overpic}[width=120pt]{p-conjugate03-2.eps}
\put(98,27){\tiny $-y_{23}$}\put(121,46){\tiny $+y_{34}$}
\put(121,87){\tiny $-y_{234}$}
\end{overpic}}
\qquad\mbox{\Large$\rightarrow$}\quad
\raisebox{-85 pt}{\begin{overpic}[width=90pt]{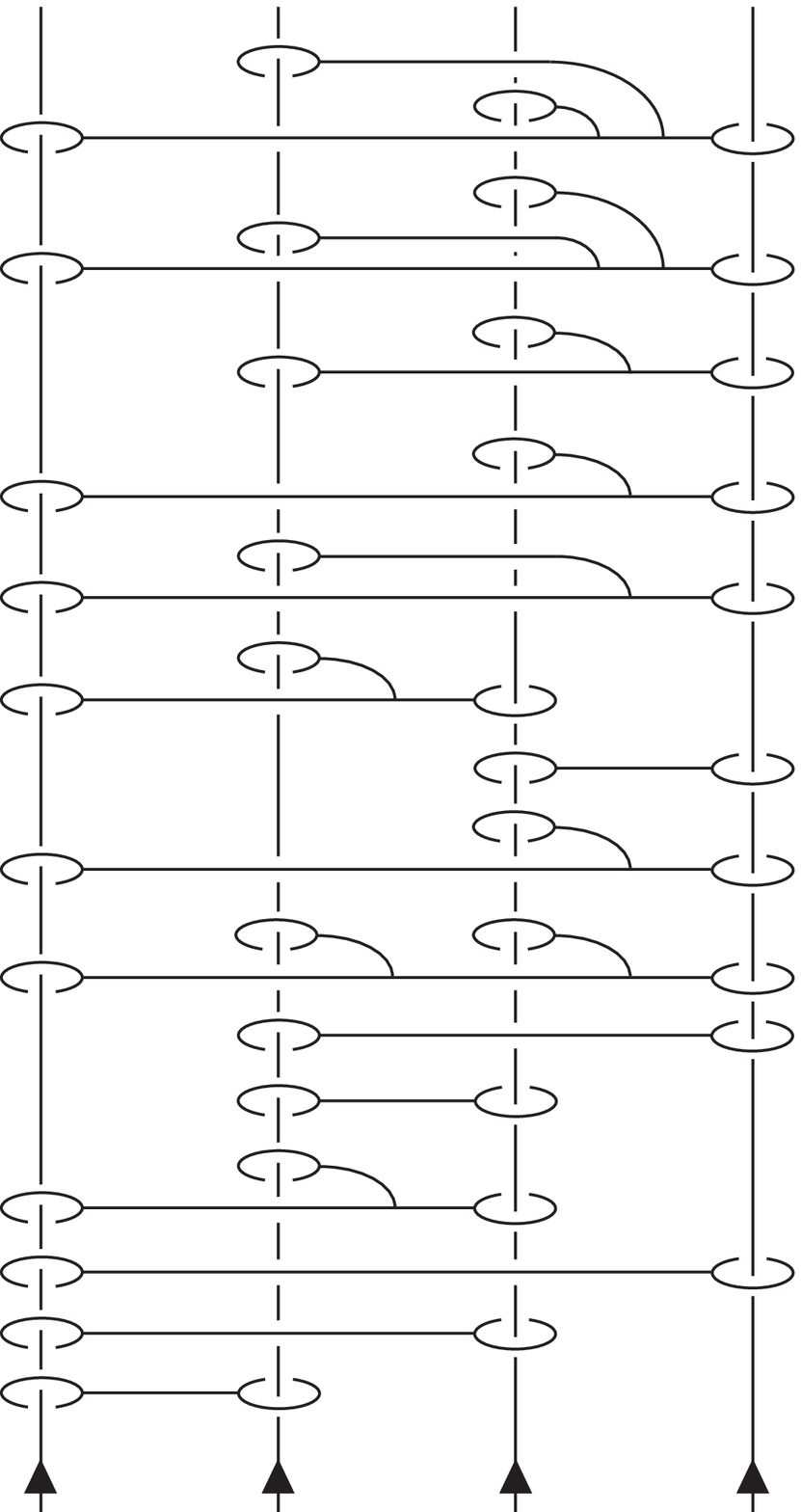}
\put(64,32){\tiny $+y_{23}$}\put(92,59){\tiny $+y_{34}$}
\put(92,71){\tiny $+y_{34}$}\put(92,154){\tiny $+y_{234}$}
\end{overpic}}$\\[30pt]
\fi

$\hspace{-1.0cm}
\mbox{\Large$\rightarrow$}\quad
\raisebox{-80 pt}{\begin{overpic}[width=90pt]{p-conjugate03-3.eps}
\put(64,32){\tiny $-y_{23}$}\put(92,59){\tiny $+y_{34}$}
\put(92,71){\tiny $+y_{34}$}\put(92,154){\tiny $-y_{234}$}
\end{overpic}}
\hspace{0.9cm}\mbox{\Large$\rightarrow$}\quad
\raisebox{-80 pt}{\begin{overpic}[width=90pt]{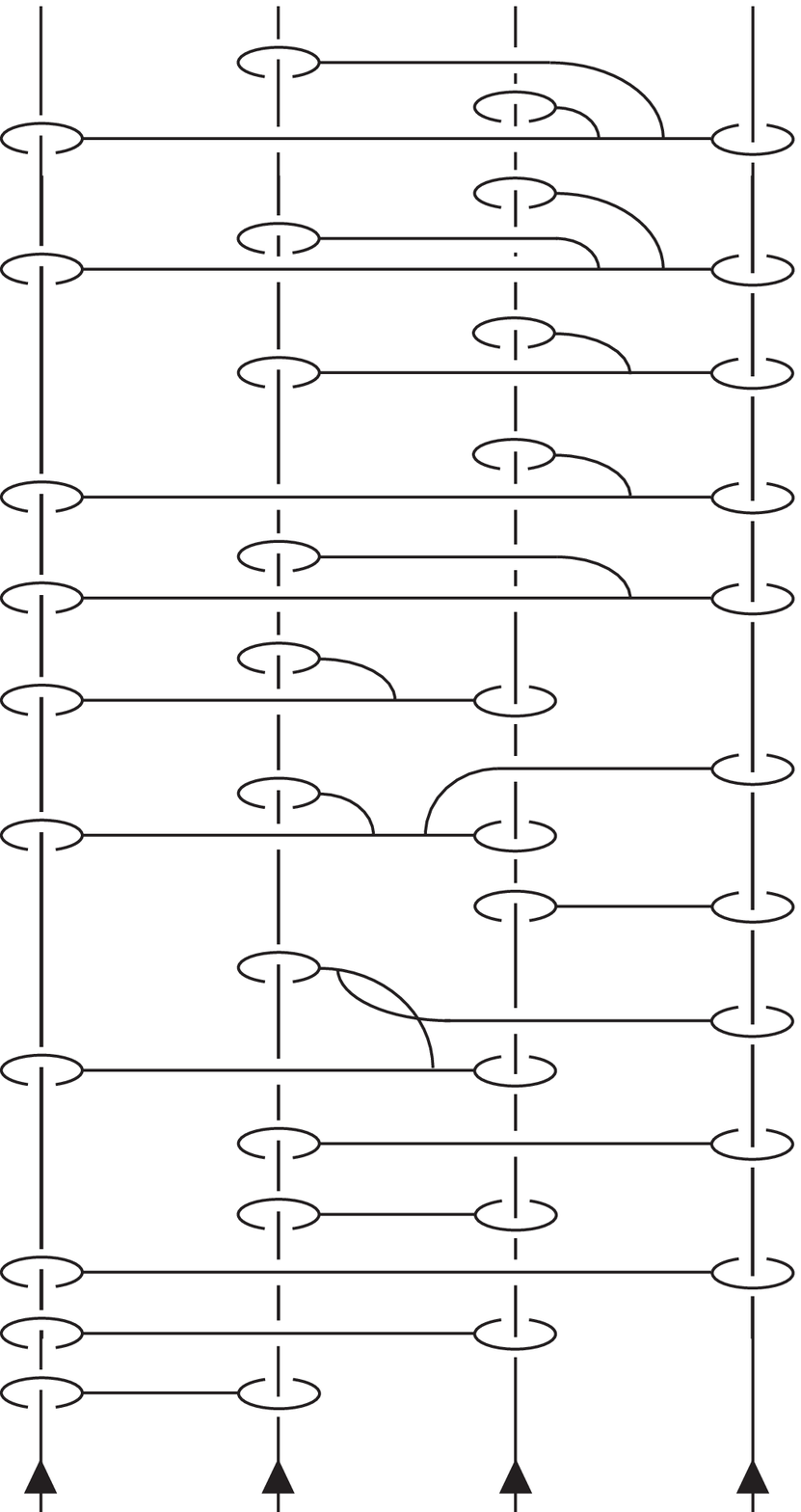}
\put(64,90){\tiny $-y_{23}$}
\put(92,113){\tiny $+y_{34}$}
\put(92,139){\tiny $+y_{34}$}
\put(92,54){\tiny $-y_{23}y_{24}$}
\put(92,82){\tiny $-y_{23}y_{34}$}
\put(92,154){\tiny $-y_{234}$}
\end{overpic}}
\hspace{1.4cm}\mbox{\Large$\rightarrow$}\quad
\raisebox{-70 pt}{\begin{overpic}[width=90pt]{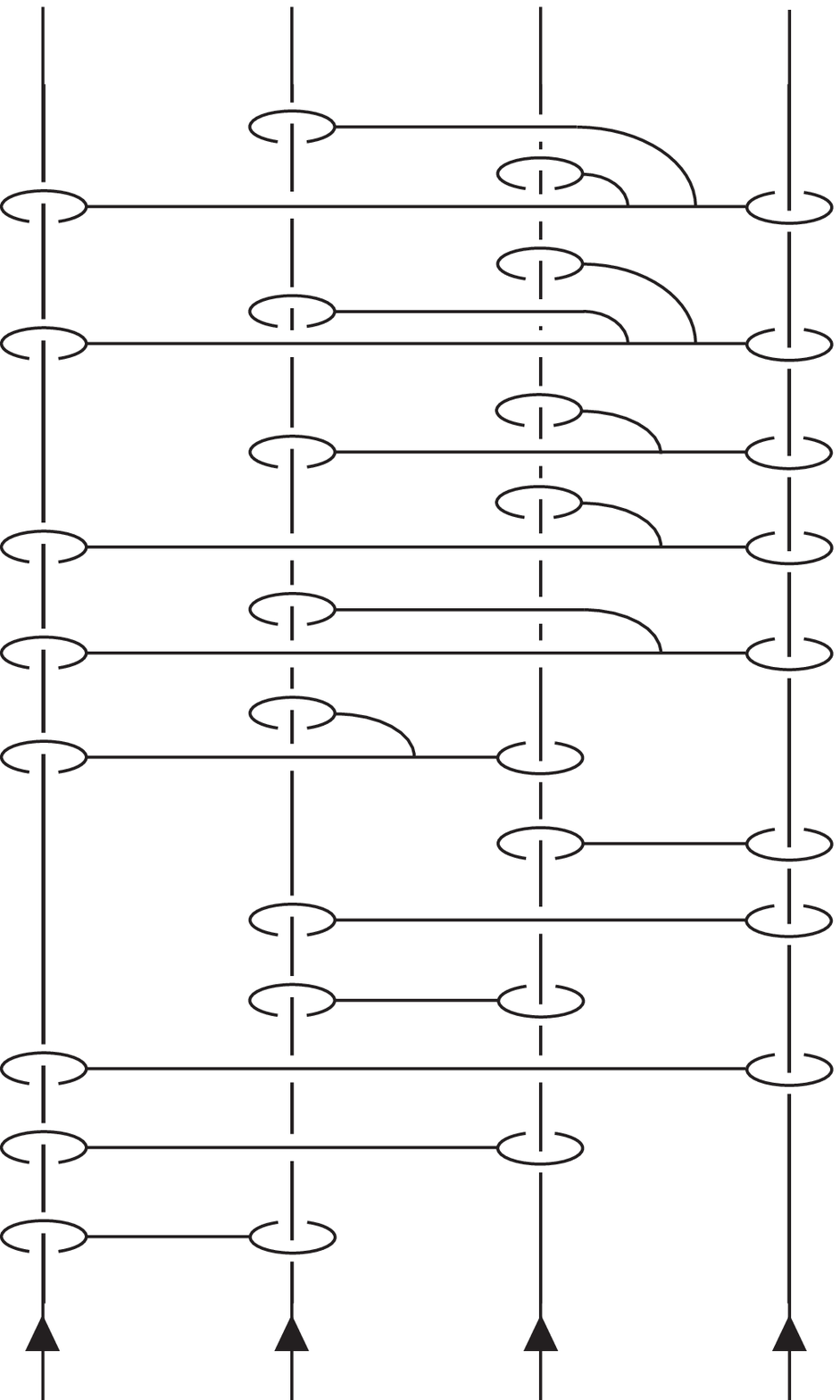}
\put(2,-12){$1$}\put(29,-12){$2$}
\put(55,-12){$3$}\put(82,-12){$4$}
\put(64,68){\tiny $-y_{23}$}
\put(92,101){\tiny $+y_{34}$}
\put(92,113){\tiny $+y_{34}\!-\!y_{23}y_{34}$}
\put(92,127){\tiny $-y_{234}\!+\!y_{23}y_{24}$}
\end{overpic}}
$
\caption{The action of $\overline{x}_{13}$ for 4-component canonical forms} \label{Example01}
\end{figure}


\section{Results of calculations} \label{result-tables}
\par
We show the results of calculations. The results of the action of the generators $\overline{x}_{ij}$ of partial conjugations for 5-component string links are in Table \ref{Act5-compSF}. In Table \ref{ActCom5-compSF}, the actions of the commutators $[\overline{x}_{ij}, \overline{x}_{kl}]$ of $\overline{x}_{ij}$ are shown. In Table \ref{ActConj5-compSF}, the actions of conjugations $cx_{ij}$ are shown.


\begin{table}[htb] 
\begin{center}
   \caption{Partial conjugations for 5-component string links} \label{Act5-compSF}
 \\
\end{table}


\clearpage


\end{document}